\documentclass[reqno,11pt]{amsart}

\usepackage[normalem]{ulem}

\usepackage{amsmath,amssymb,color}
\usepackage{amsfonts, amscd, epsfig, amsmath, amssymb,enumerate}
\usepackage{graphicx}
\usepackage{graphics}
\usepackage{color}
\usepackage{mathrsfs}
\usepackage{todonotes}
\usepackage{soul}
\usepackage[displaymath, mathlines,pagewise]{lineno}


 \usepackage[usenames,dvipsnames]{pstricks}
 \usepackage{pstricks-add}
 \usepackage{epsfig}
 \usepackage{pst-grad} 
 \usepackage{pst-plot} 
 \usepackage[space]{grffile} 
 \usepackage{etoolbox} 
\usepackage[margin=3cm]{geometry}
 \makeatletter 
 \patchcmd\Gread@eps{\@inputcheck#1 }{\@inputcheck"#1"\relax}{}{}
 \makeatother

\newtheorem{theorem}{Theorem}[section]
\newtheorem{lemma}[theorem]{Lemma}
\newtheorem{proposition}[theorem]{Proposition}

\newtheorem{assumption}{Assumption}[section]

\usepackage{tikz}
\usetikzlibrary{backgrounds}
\usetikzlibrary{patterns,fadings}
\usetikzlibrary{arrows,decorations.pathmorphing}
\usetikzlibrary{decorations}
\usetikzlibrary{calc}
\usetikzlibrary{shapes.misc}

\usepackage{setspace}

\setstretch{1.1}

\definecolor{light-gray}{gray}{0.95}
\usepackage{float}
\usepackage[colorlinks=true,linkcolor=blue,citecolor=magenta]{hyperref}
\def\centerarc[#1](#2)(#3:#4:#5){\draw[#1] ($(#2)+({#5*cos(#3)},{#5*sin(#3)})$) arc (#3:#4:#5);}

\newcommand{\vertiii}[1]{{\left\vert\kern-0.25ex\left\vert\kern-0.25ex\left\vert #1 
    \right\vert\kern-0.25ex\right\vert\kern-0.25ex\right\vert}}

\numberwithin{equation}{section}
\numberwithin{figure}{section}

\newcommand{\mc}[1]{{\mathcal #1}}

\newcommand{\bb}[1]{{\mathbb #1}}

\newcommand{\<}{\big\langle}
\renewcommand{\>}{\big\rangle}

\renewcommand{\epsilon}{\varepsilon}

\newcommand{\Mcal}{\mathcal{M}}
\newcommand{\R}{\mathbb R}
\newcommand{\Z}{\mathbb Z}

\renewcommand{\P}{\mathbb P}

\newcommand{\E}{\mathbb E}

\allowdisplaybreaks 

\title{Moderate deviation principles for the WASEP}

\author{Linjie Zhao}
\address{School of Mathematics and Statistics \& Hubei Key Laboratory of Engineering Modeling and Scientific Computing, Huazhong University of Science and Technology, Wuhan 430074, China.}
\email{linjie\_zhao@hust.edu.cn}

\thanks{\textbf{Acknowledgments.}  The project is supported by the Fundamental Research Funds for the Central Universities in China, and by the National Natural Science Foundation of China with grant numbers 11971038 and 12371142.}

\keywords{Exclusion process; fluctuation fields; moderate deviations; occupation time.}

\begin{document}

\maketitle

\begin{abstract}
We study the weakly asymmetric simple exclusion process on the integer lattice. Under suitable constraints on the strength of the weak asymmetry of the dynamics, we prove moderate deviation principles for the fluctuation fields when the process starts from stationary measures. As an application, we obtain sample path moderate deviation principles for the occupation time of the process in one dimension. 
\end{abstract}

\section{Introduction}

The exclusion process is one of the most widely studied models in statistical physics and in the theory of interacting particle systems.  In the dynamics, particles perform random walks on some graph subject to the exclusion rule, that is, there is at most one particle at each site. The theory of hydrodynamic limits studies the behaviors of large systems of interacting particle systems at a macroscopic scale. Law of large numbers, fluctuations and large deviation principles from hydrodynamic limits have been widely investigated so far, see \cite{klscaling} and references therein. Moderate deviations studies probabilities of rare events occurring with intermediate frequency, and fills the gap between large deviations and fluctuations \cite[Section 3.7]{zeitouni1998large}. As far as we know, moderate deviations from hydrodynamic limits was only considered in \cite{gao2003moderate,xue2024nonequilibrium,xue2021moderate} for the symmetric simple exclusion process, and in \cite{wang2006moderate} for a specific Ginzburg-Landau model.

One of the main purposes of this article is to study moderate deviations from hydrodynamic limits for a more general class of exclusion processes, \emph{i.e.}, the weakly asymmetric simple exclusion process (WASEP) on the integer lattice $\Z^d$, $d \geq 1$, where particles jump to the left at rate $1/(2d)$ and to the right at rate $1/(2d) + \alpha n^{-\beta} /d$ in the $i$-th direction for $1 \leq i \leq d$.  Here, $\alpha \geq 0$ and $\beta > 0$ are two parameters, and $n$ is the scaling parameter which will go to infinity. Notice that when $\alpha = 0$, we recover the standard symmetric simple exclusion process.  Under diffusive scaling (time sped up by $n^2$) and suitable constraints on $\beta$, we prove moderate deviation principles for the density fluctuation fields of the process when starting from the stationary measure, see Theorem \ref{thm:mdphl}. 

The proof of Theorem \ref{thm:mdphl} follows the ideas in \cite{gao2003moderate}. For the upper bound, we investigate an exponential martingale associated with the density fluctuation fields of the process; for the lower bound,  a perturbation of the WASEP is considered.  Compared with \cite{gao2003moderate}, due to the asymmetric part of the dynamics, we need to prove an extra super-exponential estimate, see Proposition \ref{pro: Q^n}.

As an application of Theorem \ref{thm:mdphl}, we further investigate occupation times of the WASEP.  Stationary fluctuations for the occupation times, or more generally the additive functionals, of the exclusion process have been widely studied, see \cite{bernardin2004fluctuations, bernardin2016occupation, gonccalves2013scaling, kip1987, kipnis1986central, sethuraman2000central, sethuraman2006superdiffusivity, sethuraman1996central}. We refer the readers to \cite{komorowski2012fluctuations} for an excellent review of this topic. Large deviations for the occupation times of the exclusion process was considered in \cite{chang2004occupation, landim1992occupation}. Very recently, moderate deviations for the occupation times were proved at any fixed time in the symmetric simple exclusion process \cite{gao2024deviation} and in the Kawasaki dynamics with mixing conditions \cite{gao2024moderatedev}. Under suitable constraints on $\beta$, we prove in Theorem \ref{thm-1} sample path moderate deviation principles for the occupation time of the WASEP in one dimension. 

The main idea of the proof of Theorem \ref{thm-1} is to relate the occupation time to the density fluctuation fields of the WASEP.  Then by Theorem \ref{thm:mdphl} and the contraction principle, we obtain a variational formula as the moderate deviations rate functional of the occupation time, which can be solved explicitly. 

The article is organized as follows. In Section  \ref{sec:model} we state the model and our main results.  Moderate deviations upper and lower bounds from hydrodynamic limits are proved respectively in Sections \ref{sec:upper hydro} and \ref{sec:lower hydro}. Finally, Section \ref{sec: oc} is devoted to the proof of moderate deviations for the occupation time.

\section{Model and results}\label{sec:model}

The state space of the weakly asymmetric simple exclusion process (WASEP) is $\mathcal{X}_d := \{0,1\}^{\Z^d}$, whose elements are denoted by $\eta = \{\eta_x\}_{x \in \Z^d}$.  We call a function $f$ on $\mc{X}_d$ is local if its value depends on $\eta$ only through a finite number of coordinates.  Fix parameters $\alpha \geq 0$ and $\beta > 0$. The infinitesimal generator  of the process is $L_n = n^2(L_s+ \alpha n^{-\beta}L_a)$, where for local functions $f: \mathcal{X}_d \rightarrow \R$, 
\begin{align*}
	L_s f (\eta) &= \frac{1}{2d }\sum_{x \in \Z^d } \sum_{i=1}^d   \big[f(\eta^{x,x+e_i}) - f(\eta)\big],\\
	L_a f (\eta) &= \frac{1}{d} \sum_{x \in \Z^d }  \sum_{i=1}^d   \eta_x(1-\eta_{x+e_i}) \big[f(\eta^{x,x+e_i}) - f(\eta)\big].
\end{align*}
Here,  $n$ is the scaling parameter, $\{e_i\}_{1 \leq i \leq d}$ are the canonical basis of $\R^d$, and for $x,y \in \Z^d$,  $\eta^{x,y}$ is the configuration obtained from $\eta$ after swapping the values of $\eta_x$ and $\eta_y$, that is, $(\eta^{x,y})_z = \eta_x$ if $z = y$, $= \eta_y$ if $z = x$, and $= \eta_z$ otherwise.

For a constant $\rho \in [0,1]$, let $\nu_\rho$ be the Bernoulli product measure on $\mathcal{X}_d$ with  particle density $\rho$, 
\[\nu_\rho (\eta_x = 1) = \rho, \quad \forall x \in \Z^d.\]
It is well known that $\nu_\rho$, $\rho \in [0,1]$, are reversible for the generator $L_s$, and invariant for $L_a$, see \cite[Chapter VIII]{liggettips}.   

Throughout this article, we fix $\rho \in (0,1)$, and let $\eta^n(t)$ be the process with generator $L_n$ and with initial distribution $\nu_\rho$. Without confusion, we write $\eta(t)$ for $\eta^n (t)$ to make notation short. Fix $T > 0$ a time horizon. Denote by $\P^n_\rho$ the probability measure on the path space $\mc{D}([0,T],\mc{X}_d)$ associated with the law of the process $\{\eta(t)\}$, and by $\E^n_\rho$ the corresponding expectation.

\subsection{Density fluctuation fields} In this subsection, we state the moderate deviation principles (MDP) from hydrodynamic limits. The rescaled density fluctuation field $\mu^n_t$ of the process,  which acts on Schwartz functions $H \in \mc{S} := \mc{S} (\R^d)$,  is defined as
\[\<\mu^n_t ,H\> = \frac{1}{a_n} \sum_{x \in \Z^d } (\eta_x (t)- \rho) H(\tfrac{x-v_ntm}{n}),\]
where 
\[v_n := \frac{\alpha (1-2\rho)}{d}n^{2-\beta}, \quad m = (1,1,\ldots,1).\]
For two real numbers $a,b$, denote $a \vee b = \max \{a,b\}, a \wedge b = \min \{a,b\}$. We make the following assumptions on $\beta$ and $a_n$. 

\begin{assumption}\label{assump:beta a_n}
Assume 
\[n^{d/2} \sqrt{\log n} \ll a_n \ll n^d \wedge n^{d+\beta - 1}.\]
Assume further that 
\begin{itemize}
	\item for $d=1$,
	\[\beta > \frac{2}{3},\quad   a_n \gg n^{1-\tfrac{\beta}{2}} ;\]
	\item for $d=2$, there exists some $0 < \varepsilon_0 < 1$ such that
	\[\beta > \frac{1}{2},\quad  a_n \gg n^{2-\beta}  ( \log n )^{\tfrac{1}{1-\varepsilon_0}}; \]
	\item for $d \geq 3$,
	\[\beta > \tfrac{1}{2},\quad   a_n \gg n^{d-\beta}.\]
\end{itemize}
\end{assumption}

Next, we introduce the moderate deviations rate function.  For $H \in \mc{C}^{1,\infty}_c ( [0,T]  \times \R^d)$ and $\mu \in \mc{D} ([0,T],\mathcal{S}^\prime)$,  where $\mc{S}^\prime := \mathcal{S}^\prime (\bb{R}^d)$ is the dual of the Schwartz space $\mc{S}$, define the linear functional $l (\cdot,\cdot)$ as
\[l(\mu, H)=\left\langle\mu_{T}, H_T\right\rangle-\left\langle\mu_{0}, H_0 \right\rangle-\int_{0}^{T}\langle\mu_{s},\left(\partial_s+(1/2d) \Delta\right) H_s \rangle d s.\]
Above, $H_t (u) = H(t,u)$ for $u \in \R^d$, and $\Delta = \sum_{i=1}^d \partial_{u_i}^2$ is the Laplacian operator. The rate function $\mathcal{Q} (\mu) := \mathcal{Q}_{dyn} (\mu)+ \mathcal{Q}_0 (\mu_0)$ is defined as
\begin{equation*}
	\begin{aligned} \mathcal{Q}_{dyn}(\mu) &=\sup _{H \in \mc{C}^{1,\infty}_c ( [0,T]  \times \R^d)}\left\{l(\mu, H)-\frac{\chi (\rho)}{2d} \sum_{i=1}^d \|\partial_{u_i}H\|_{L^2 ([0,T] \times \R^d)}^2\right\}, \\ \mathcal{Q}_{0}\left(\mu_{0}\right) &=\sup _{\phi \in \mc{C}_c^\infty \left(\R^d\right)}\left\{\left\langle\mu_{0}, \phi\right\rangle-\frac{\chi (\rho)}{2} \|\phi\|_{L^2(\R^d)}^2 \right\}, \end{aligned}
\end{equation*}
where $\chi (\rho) = \rho (1-\rho)$, and
\[\|\partial_{u_i}H\|_{L^2 ([0,T] \times \R^d)}^2 := \int_0^T  \int_{\R^d} [\partial_{u_i} H_s (u)]^2 \,du\,ds,\quad \|\phi\|_{L^2(\R^d)}^2 := \int_{\R^d} \phi (u)^2 d u. \]

We  state the first main result of this article.

\begin{theorem}\label{thm:mdphl}
Under Assumption \ref{assump:beta a_n},  the  sequence of processes $\{\mu^n_t: 0 \leq t \leq T\}_{n \geq 1}$ satisfies the MDP with decay rate $a_n^2 / n^d$ and with rate function $\mathcal{Q} (\mu)$. More precisely, for any closed set $C \subseteq \mc{D} ([0,T], \mathcal{S}^\prime )$, and for any open set $O \subseteq \mc{D} ([0,T], \mathcal{S}^\prime)$,
\begin{align}
\limsup_{n \rightarrow \infty} \frac{n^d}{a_n^2} \log \P^n_{\rho} \big( \{\mu^n_t: 0 \leq t \leq T\} \in C \big) \leq - \inf_{\mu \in C} \mathcal{Q} (\mu),\label{upper hd}\\
\liminf_{n \rightarrow \infty} \frac{n^d}{a_n^2} \log \P^n_{\rho} \big( \{\mu^n_t: 0 \leq t \leq T\} \in O \big) \geq - \inf_{\mu \in O} \mathcal{Q} (\mu). \label{lower hd}
\end{align}
\end{theorem}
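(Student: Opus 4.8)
The plan is to follow the standard Donsker--Varadhan / Kipnis--Olla--Varadhan-type scheme for moderate deviations, adapted as in \cite{gao2003moderate}, splitting the argument into an upper bound and a lower bound, preceded by an exponential tightness statement and a characterization of the static (time-zero) behavior. First I would establish exponential tightness of the family $\{\mu^n_\cdot\}$ at speed $a_n^2/n^d$ in $\mc{D}([0,T],\mc{S}')$, via a Garsia--Rodemich--Rumsey type criterion tested against a countable dense family of Schwartz functions, using the exponential martingale estimates to control the modulus of continuity of $\<\mu^n_t,H\>$; this lets one restrict both bounds to compact sets and to paths that are genuinely continuous. Along the way I would record the basic object: for each $H\in\mc{C}^{1,\infty}_c([0,T]\times\R^d)$ the Dynkin martingale
\[
M^{n,H}_t=\<\mu^n_t,H_t\>-\<\mu^n_0,H_0\>-\int_0^t\<\mu^n_s,(\partial_s+(1/2d)\Delta)H_s\>\,ds-\int_0^t \Gamma^n_s(H)\,ds,
\]
where $\Gamma^n_s(H)$ collects the contribution of the asymmetric part $L_a$ after the shift by $v_n t m$ has removed the first-order transport term; the point of the choice of $v_n$ is precisely that the remaining asymmetric contribution is a quadratic-in-$\eta$ term of lower order.

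For the upper bound \eqref{upper hd} I would use the exponential martingale
\[
\exp\Big\{\frac{a_n^2}{n^d}\big(M^{n,H}_T-\tfrac12\langle M^{n,H}\rangle_T\big)\Big\},
\]
whose expectation under $\P^n_\rho$ is $1$, together with the explicit computation of the quadratic variation $\langle M^{n,H}\rangle_T$, which concentrates (after dividing by $n^d/a_n^2$ and using the stationarity of $\nu_\rho$ and a replacement/Boltzmann--Gibbs step) around $\tfrac{\chi(\rho)}{2d}\sum_i\|\partial_{u_i}H\|^2_{L^2([0,T]\times\R^d)}$. The crucial new ingredient relative to the symmetric case is controlling the error from $\Gamma^n_s(H)$: one needs the super-exponential estimate of Proposition \ref{pro: Q^n} (replacing the local quadratic function by its mean-field value $\chi(\rho)$-type expression via a multiscale/entropy argument) so that this term does not contribute to the rate and the conditions on $\beta$ and $a_n$ in Assumption \ref{assump:beta a_n} are exactly what make the remainder $o(n^d/a_n^2)$ in the exponent. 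Optimizing over $H$ via Varadhan's lemma and the Riesz-type representation of $l(\mu,\cdot)$ then produces $\mc{Q}_{dyn}(\mu)$; the initial term is handled separately, using that under $\nu_\rho$ the field $\<\mu^n_0,\phi\>$ satisfies an MDP with rate $\mc{Q}_0$ by a direct i.i.d.\ Cram\'er computation (Bernoulli variables, Legendre transform of the Bernoulli log-MGF reducing to the Gaussian rate $\tfrac{1}{2\chi(\rho)}\|\cdot\|^2$ at moderate scale). A standard projective-limit / exponential-tightness argument upgrades the bound over finite-dimensional projections to the full path space.

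For the lower bound \eqref{lower hd} I would use the change-of-measure (tilting) method: for a smooth $H$, define the perturbed dynamics with time-dependent generator obtained by a Doob-type $h$-transform of $L_n$ along $\exp\{(a_n^2/n^d)\langle \mu^n_t,H_t\rangle\}$ (equivalently, a weakly asymmetric perturbation of order $a_n/n^d$ superimposed on the WASEP), show that under this perturbed law $\mu^n_\cdot$ concentrates on the unique path $\mu^H$ solving the corresponding (perturbed, linear) hydrodynamic equation, estimate the relative entropy of the perturbed law with respect to $\P^n_\rho$ and check it matches $\mc{Q}_{dyn}(\mu^H)+\mc{Q}_0(\mu^H_0)$, and then use the standard lower-bound inequality
$\liminf (n^d/a_n^2)\log\P^n_\rho(O)\ge -\mc{Q}(\mu^H)$ for each such reachable $\mu^H$, concluding by a density argument that the smooth $\mu^H$'s are $\mc{Q}$-dense in any open set (energy estimate: any $\mu$ with $\mc{Q}(\mu)<\infty$ has an $L^2$ density and can be approximated in $\mc{D}([0,T],\mc{S}')$ with convergence of rate functions). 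The main obstacle, in both bounds, is the same super-exponential replacement lemma for the asymmetric term $\Gamma^n_s(H)$ — in the symmetric case \cite{gao2003moderate} there is no such term, so Proposition \ref{pro: Q^n} and the dimension-dependent conditions on $(\beta,a_n)$ in Assumption \ref{assump:beta a_n} carry the real weight; the subtlety is that a one-block/two-blocks estimate must be run at the moderate-deviation speed $a_n^2/n^d$ rather than at the large-deviation speed $n^d$, which forces the stronger lower bounds on $a_n$ (e.g.\ $a_n\gg n^{1-\beta/2}$ for $d=1$) and, for $d=2$, the logarithmic correction.
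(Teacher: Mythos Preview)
Your proposal is correct and follows essentially the same approach as the paper: the exponential martingale together with Proposition~\ref{pro: Q^n} for the upper bound, a tilting by a perturbed generator $L_n^H$ and initial measure $\nu_\rho^{n,\phi}$ plus a hydrodynamic-limit statement under the perturbed law for the lower bound, and exponential tightness to pass from compact to closed sets. Minor technical deviations (the paper uses the Feynman--Kac exponential martingale rather than the Dol\'eans--Dade form $\exp\{M-\tfrac12\langle M\rangle\}$, which is not literally a martingale for jump processes; it applies the Minimax theorem directly on path space rather than a projective-limit step; and it uses Aldous-type rather than GRR-type criteria for tightness) do not affect the strategy.
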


\subsection{Additive functionals}  As an application of Theorem \ref{thm:mdphl}, we obtain the MDP for the additive functionals of the WASEP in dimension $d=1$. For any local function $f$ on $\mc{X}_1$, define
\[\Gamma^n_t (f)= \frac{n}{a_n} \int_0^t \{f(\eta(s)) - \tilde{f} (\rho)\}  ds,\]
where $\tilde{f} (\rho) =  E_{\nu_\rho} [f]$. When $f(\eta) = \eta_0$, we simply write $\Gamma^n_t = \Gamma^n_t (f)$ the occupation time of the WASEP.  Invariance principles for the above additive functional was proved by Gon{\c c}alves and Jara in \cite{gonccalves2013scaling}.  In particular, if $\beta > 1$ or $\rho = 1/2$, then the limit is given by $\tilde{f}^\prime (\rho)$ multiple of a fractional Brownian motion with Hurst parameter $3/4$.

Now, we introduce the rate function for the MDP of the additive  functional $\Gamma^n_t (f)$. Let $\{B_t^{3/4}: t \geq 0\}$ be the fractional Brownian motion with Hurst parameter $3/4$, which has the following representation:
\[B_t^{3/4} = \int_0^t K(t,s) d B_s,\]
where $\{B_t\}$ is the standard one dimensional Brownian motion, and 
\[K(t,s) = \sqrt{\frac{3}{8\int_{0}^{1}(1-x)^{-\frac{1}{2}} x^{-\frac{3}{4}} d x}} s^{-\frac{1}{4}} \int_{s}^{t}(u-s)^{-\frac{3}{4}} u^{\frac{1}{4}} d u,\]  
see \cite[Proposition 2.5, Chapter 2]{nourdin2012selected}. Let $\mc{H}$ be the set of continuous functions $\gamma: [0,T] \rightarrow \R$ such that there exists $\dot{\gamma} \in L^2 ([0,T])$ satisfying that 
\[\gamma (t) = \int_0^t K(t,s) \dot{\gamma} (s) ds, \quad t \in [0,T].\]
Finally, if $\tilde{f}^\prime (\rho) \neq 0$, define 
\[I_{\rm path} (\gamma) = \begin{cases}
	\frac{1}{2} \int_0^T \dot{\gamma} (t)^2 dt, \quad &\text{if }\; \gamma \in \mc{H};\\
	+\infty, \quad &\text{otherwise},
\end{cases} \quad I_f (\gamma) = \frac{1}{\sigma^2 \tilde{f}^\prime (\rho)^2} I_{\rm path} (\gamma),\]
where $\sigma^2 = 4\sqrt{2} \chi(\rho)/(3\sqrt{\pi})$ is the limiting variance of the occupation time, see \cite{kip1987}. Notice that $I_{\rm path}$ is the large deviations rate function of the fractional Brownian motion with Hurst parameter $3/4$, see \cite[Theorem 3.4.12]{deuschel1989large} for example.

The following is the second main result of this article.

\begin{theorem}\label{thm-1}
Let $d=1$ and Assumption \ref{assump:beta a_n} hold. Assume further that $\rho=1/2$ if $2/3 < \beta \leq 1$.
If $\tilde{f}^\prime (\rho) \neq 0$, then the sequence $\{\Gamma^n_t (f): 0 \leq t \leq T\}_{n \geq 1}$ satisfies the MDP with decay rate  $a_n^2/n$ and with rate function $I_f (\cdot)$.
More precisely, for any closed set $C \subseteq \mc{C}([0,T],\R)$, and for any open set $O \subseteq \mc{C} ([0,T],\R)$,
\begin{align}
	\limsup_{n \rightarrow \infty} \frac{n}{a_n^2} \log \P^n_{\rho} \big( \{\Gamma^n_t (f): 0 \leq t \leq T\}\in C \big) \leq - \inf_{\gamma \in C} I_f (\gamma),\label{upper oc}\\
	\liminf_{n \rightarrow \infty} \frac{n}{a_n^2} \log \P^n_{\rho} \big(\{\Gamma^n_t (f): 0 \leq t \leq T\} \in O \big) \geq - \inf_{\gamma \in O} I_f (\gamma). \label{lower oc}
\end{align}
\end{theorem}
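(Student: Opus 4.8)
The plan is to deduce Theorem~\ref{thm-1} from Theorem~\ref{thm:mdphl} by expressing the additive functional $\Gamma^n_t(f)$ as a continuous functional of the density fluctuation field $\mu^n_\cdot$ (up to negligible errors), and then applying the contraction principle. The first step is a reduction from a general local $f$ to the occupation time $f(\eta)=\eta_0$. Writing $f(\eta)=\tilde f(\rho)+\tilde f'(\rho)(\eta_0-\rho)+\big(f(\eta)-\tilde f(\rho)-\tilde f'(\rho)(\eta_0-\rho)\big)$, the linear part contributes $\tilde f'(\rho)\,\Gamma^n_t$ and the remainder is a local function that is orthogonal, in $L^2(\nu_\rho)$, to the degree-$\le 1$ part of the Fock space. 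By a Boltzmann--Gibbs-type / Kipnis--Varadhan estimate (which for the symmetric part is classical, and whose asymmetric correction is controlled exactly as in Proposition~\ref{pro: Q^n} under the stated constraints on $\beta$), the time integral of such a remainder is super-exponentially small at speed $a_n^2/n$, uniformly on $[0,T]$; hence $\Gamma^n_t(f)$ and $\tilde f'(\rho)\,\Gamma^n_t$ are exponentially equivalent, and it suffices to treat $\Gamma^n_t$.

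**Second**, I would relate $\Gamma^n_t$ to $\mu^n_\cdot$ via the elementary identity obtained from Dynkin's formula: for a test function $H$, $\langle\mu^n_t,H_t\rangle-\langle\mu^n_0,H_0\rangle-\int_0^t\langle\mu^n_s,(\partial_s+\tfrac1{2d}\Delta)H_s\rangle ds$ equals a martingale plus an asymmetric drift term, and choosing $H$ to approximate (in the relevant Sobolev sense) the solution of the backward heat equation with a delta-type source localized near the origin produces $\tfrac{n}{a_n}\int_0^t(\eta_0(s)-\rho)\,ds$ on the left up to errors that vanish under Assumption~\ref{assump:beta a_n}. Concretely, for $d=1$ one uses the heat kernel $p_s(u)$: the map $\Phi$ sending a path $\mu\in\mc D([0,T],\mc S')$ to the real-valued path $t\mapsto \int_0^t \langle \mu_s, p_{(\text{something})}\rangle\,ds$ — after the appropriate regularization — is continuous, and $\Gamma^n_t$ is, up to exponentially negligible error, $\Phi(\mu^n_\cdot)$. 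The drift term $v_n$ in the shift has been built into the definition of $\mu^n$ precisely so that, when $\beta>1$ or $\rho=1/2$, it does not disturb this identification; for $2/3<\beta\le1$ the hypothesis $\rho=1/2$ kills $v_n$ and the asymmetric contribution outright, which is why it is imposed.

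**Third**, with $\Gamma^n_\cdot \approx \Phi(\mu^n_\cdot)$ and $\Phi$ continuous, the contraction principle applied to Theorem~\ref{thm:mdphl} gives the MDP for $\Gamma^n_\cdot$ at speed $a_n^2/n^d = a_n^2/n$ with rate function $\gamma\mapsto \inf\{\mathcal Q(\mu): \Phi(\mu)=\gamma\}$. It then remains to solve this variational problem explicitly. Because $\mathcal Q$ is a quadratic (Gaussian-type) rate function and $\Phi$ is linear, the infimum is itself a quadratic functional of $\gamma$, and the minimizer is characterized by a linear Euler--Lagrange equation; carrying this out, the optimal $\mu$ is a Gaussian field whose covariance structure is exactly that of $\sigma^2$ times the occupation-time Gaussian process, i.e. $\tilde f'(\rho)$ times $\sigma B^{3/4}_\cdot$ in the central-limit scaling. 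One recognizes the resulting rate function as the large-deviation rate function of the fractional Brownian motion with Hurst parameter $3/4$, rescaled by $1/(\sigma^2\tilde f'(\rho)^2)$ — that is, $I_f(\gamma)= \tfrac{1}{\sigma^2\tilde f'(\rho)^2}I_{\rm path}(\gamma)$ with $I_{\rm path}$ as defined via the kernel $K(t,s)$. The consistency check is that the variance computed from $\mathcal Q$ reproduces $\sigma^2=4\sqrt2\,\chi(\rho)/(3\sqrt\pi)$, the known limiting variance of the occupation time.

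**The main obstacle** I expect is the exponential-equivalence step: showing that replacing the (smooth, compactly supported) test function $H$ needed to make the Dynkin identity rigorous by the genuine heat kernel, and replacing the nonlinear remainder $f-\tilde f(\rho)-\tilde f'(\rho)(\eta_0-\rho)$ by zero, each incur only errors that are super-exponentially small at the delicate speed $a_n^2/n$. This requires a quantitative Boltzmann--Gibbs principle with exponential (not merely $L^2$) control, together with the super-exponential estimate for the asymmetric part of the generator (Proposition~\ref{pro: Q^n}); the interplay between the localization scale, the spectral-gap/Kipnis--Varadhan bounds, and the precise window $n^{d/2}\sqrt{\log n}\ll a_n\ll n^d\wedge n^{d+\beta-1}$ in Assumption~\ref{assump:beta a_n} is exactly where the constraints on $\beta$ (and on $\rho$ when $2/3<\beta\le1$) are consumed. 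The subsequent variational computation, while requiring care with the fractional kernel $K$, is essentially a Gaussian calculation and should go through cleanly once the reduction is in place.
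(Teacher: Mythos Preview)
Your overall strategy---reduce to the occupation time, relate it to the density fluctuation field, apply Theorem~\ref{thm:mdphl} via contraction, then solve the resulting Gaussian variational problem---matches the paper's, but the execution differs in two substantive ways that are worth flagging.

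First, the paper does \emph{not} apply a single contraction-principle step to a continuous functional $\Phi$ on the full path space. The map you have in mind, morally $\mu\mapsto\big(t\mapsto\int_0^t\mu_s(0)\,ds\big)$, is \emph{not} continuous on $\mc D([0,T],\mc S')$ (evaluation at a point is not a tempered distribution), so the standard contraction principle is not available. The paper instead fixes a smooth mollifier $\varphi_\varepsilon$ approximating $\delta_0$, proves directly (Lemma~\ref{lem:occu replacement 2}, via Feynman--Kac and a Dirichlet-form estimate, not via Dynkin's formula with a heat-kernel test function) that $\sup_{t\le T}\big|\Gamma^n_t-\int_0^t\langle\mu^n_s,\varphi_\varepsilon\rangle\,ds\big|$ is super-exponentially small, and then establishes \emph{finite-dimensional} upper and lower MDP bounds (Lemmas~\ref{lem:finite dimensional upper}--\ref{lem:finite dimensional lower}) by applying Theorem~\ref{thm:mdphl} to the genuinely continuous map $\mu\mapsto\big(\int_0^{t_i}\langle\mu_s,\varphi_\varepsilon\rangle\,ds\big)_{i\le k}$ and taking $\varepsilon\to0$ inside the variational formula. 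Exponential tightness of $\Gamma^n_\cdot$ is proved separately from the same replacement lemma, and the sample-path MDP then follows from the finite-dimensional one via \cite[Theorem~4.28]{feng2006large}. Your plan could be repaired by invoking an extended contraction principle for exponentially good approximations, but you would still need exactly Lemma~\ref{lem:occu replacement 2} as input, and your Dynkin/backward-heat-equation formulation of that step is both more elaborate than necessary and vague on the ``something'' in $p_{(\text{something})}$.

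Second, the variational problem is not handled abstractly as a ``Gaussian Euler--Lagrange'' computation; the paper solves it explicitly (Proposition~\ref{prop:variational problem}) by computing the constrained minimum of $\mc Q$ over $\int_0^{t_i}\mu_s(0)\,ds=\alpha_i$ via Cauchy--Schwarz against the heat kernel and its derivative, obtaining exactly the covariance matrix $\sigma^2 A$ of the fractional Brownian motion at the prescribed times, and then identifies the sample-path rate function with $I_{\rm path}$ a posteriori. Your proposal and the paper agree on the reduction from general $f$ to $\eta_0$ (the unnamed lemma at the start of Section~\ref{sec: oc}) and on the role of the constraint $\rho=1/2$ for $2/3<\beta\le1$ (needed in Lemma~\ref{lem:occu replacement 2} to kill the $v_n$ shift).
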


\section{Proof of Theorem \ref{thm:mdphl}: upper bound}\label{sec:upper hydro}

\subsection{An exponential martingale} In this subsection, we introduce the following exponential martingale, which plays an important role when proving the upper bound \eqref{upper hd}. For $H \in \mc{C}^{1,\infty}_c ([0,T] \times \R^d)$,   by Feynman-Kac formula \cite[Appendix 1.7]{klscaling},
\begin{align*}
	\mc{M}^n_t (H) = \exp \Big\{   \frac{a_n^2}{n^d}  \<\mu^n_t ,H_t\> -  \frac{a_n^2}{n^d}  \<\mu^n_0 ,H_0\> - \int_0^t e^{-\frac{a_n^2}{n^d}  \<\mu^n_s,H_s\>}(\partial_{s} +L_n) e^{\frac{a_n^2}{n^d}  \<\mu^n_s ,H_s\>} ds \Big\}
\end{align*}
is a martingale.  We want to write the above martingale as a functional of the density fluctuation field.

In the following computations, we simply write $\eta_x = \eta_x (s)$ when there are no confusions.  By Taylor's expansion and summation by parts formula,
\begin{equation}\label{cal 4}
\begin{aligned}
	&e^{-\frac{a_n^2}{n^d}  \<\mu^n_s,H_s\>} n^2L_s e^{\frac{a_n^2}{n^d}  \<\mu^n_s ,H_s\>} \\
	&=  \frac{n^2}{2d} \sum_{x \in \Z^d} \sum_{i=1}^d \Big[e^{\frac{a_n}{n^d} (\eta_x - \eta_{x+e_i} ) (H_s(\frac{x+e_i-v_nsm}{n}) - H_s (\frac{x-v_nsm}{n}))} - 1\Big] \\
	& = \frac{a_n^2}{n^d} \Big\{  \frac{1}{2d} \<\mu^n_s,\Delta_n H_s\> + \frac{1}{4dn^d} \sum_{x \in \Z^d} \sum_{i=1}^d (\eta_x - \eta_{x+e_i})^2 \big(\nabla_{n,i} H_s(\tfrac{x-v_nsm}{n})\big)^2 + O_H \Big(\frac{a_n}{n^{d+1}}\Big) \Big\}.
\end{aligned}
\end{equation}
Above, $O_H (a_n/n^{d+1})$ is bounded by $C(H)a_n/n^{d+1}$ which vanishes as $n \rightarrow \infty$ since $a_n \ll n^d$,  and $\nabla_{n,i}$ and $\Delta_n = \sum_{i=1}^d \Delta_{n,i}$ are the discrete gradient and Laplacian respectively,
\begin{align*}
	\nabla_{n,i} H_s(\tfrac{x-v_nsm}{n}) &= n \big[H_s(\tfrac{x+e_i-v_nsm}{n}) - H_s(\tfrac{x-v_nsm}{n})\big], \\
	\Delta_{n,i} H_s(\tfrac{x-v_nsm}{n}) &= n^2 \big[H_s(\tfrac{x+e_i-v_nsm}{n}) + H_s(\tfrac{x-e_i-v_nsm}{n}) - 2H_s(\tfrac{x-v_nsm}{n})\big].
\end{align*}
Similarly, 
\begin{equation}\label{cal 5}
\begin{aligned}
	&e^{-\frac{a_n^2}{n^d}  \<\mu^n_s,H_s\>} \alpha n^{2-\beta} L_a e^{\frac{a_n^2}{n^d}  \<\mu^n_s ,H_s\>}  \\
	&= \frac{\alpha n^{2-\beta}}{d}  \sum_{x \in \Z^d} \sum_{i=1}^d   \eta_x (1-\eta_{x+e_i}) \Big[e^{\frac{a_n}{n^d}  (H_s(\frac{x+e_i-v_nsm}{n}) - H_s (\frac{x-v_nsm}{n}))} - 1\Big]\\
	&= \frac{a_n^2}{n^d} \Big\{ \frac{\alpha n^{1-\beta}}{da_n}  \sum_{x \in \Z^d} \sum_{i=1}^d [\eta_x (1-\eta_{x+e_i}) - \chi(\rho) ]\nabla_{n,i} H_s(\tfrac{x-v_nsm}{n}) + O_H (n^{-\beta}) \Big\}.
\end{aligned}
\end{equation}
We introduced $\chi(\rho)$ in the last line since $\sum_{x \in \Z^d} \nabla_{n,i} H_s(\tfrac{x-v_nsm}{n})  = 0$. Finally,
\begin{align*}
	&e^{-\frac{a_n^2}{n^d}  \<\mu^n_s,H_s\>} \partial_{s} e^{\frac{a_n^2}{n^d}  \<\mu^n_s ,H_s\>} \\
	&= \frac{a_n^2}{n^d} \Big\{  \<\mu^n_s,\partial_s {H}_s\> -  \frac{\alpha (1-2\rho)n^{1-\beta}}{da_n}  \sum_{x \in \Z^d} \sum_{i=1}^d [\eta_x -\rho ] \partial_{u_i} H_s(\tfrac{x-v_nsm}{n}) \Big\}.
\end{align*}

To replace the above discrete gradient and Laplacian by the continuous ones, we need the following lemma.

\begin{lemma}\label{lem:replacement-1}
Let $r_n \gg a_n$ be a sequence of real numbers, and let $f: \mathcal{X}_d \rightarrow \R$ be some local function. Then, for any  $H \in \mc{C}^{1,\infty}_c ([0,T] \times \R^d)$ and any $\delta > 0$,
	\[\limsup_{n \rightarrow \infty} \frac{n^d}{a_n^2}\log \P^n_\rho \Big( \sup_{0 \leq t \leq T} \Big|\int_0^t \frac{1}{r_n} \sum_{x \in \Z^d} \big(\tau_{x} f (\eta (s)) - \tilde{f} (\rho) \big) H(s,\tfrac{x}{n}) ds \Big|   > \delta\Big) = - \infty,\]
	where $\tilde{f} (\rho) = E_{\nu_{\rho}} [f]$. 
\end{lemma}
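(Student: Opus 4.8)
This is a super-exponential estimate of "replacement lemma" type: the time integral of a centered local function, summed against a smooth test function and normalized by $r_n \gg a_n$, is super-exponentially small on the scale $a_n^2/n^d$. The natural tool is the Feynman--Kac / entropy bound combined with a spectral-gap (or Rayleigh--Ritz) estimate for the symmetric part $L_s$ of the generator, exactly as in the classical proof of the replacement lemma in \cite[Chapter 5]{klscaling}, but carried out at exponential order rather than in $L^1$. Since the process starts from the stationary measure $\nu_\rho$ and $\nu_\rho$ is reversible for $L_s$ and invariant for $L_a$, the asymmetric part will not disturb the Dirichlet form estimate.

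\medskip

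\noindent\textbf{Main steps.}
\emph{Step 1 (Exponential Chebyshev + Feynman--Kac).} Fix $\delta>0$. By a union bound over signs it suffices to bound the probability without the absolute value. For a parameter $\lambda>0$ to be optimized, apply exponential Chebyshev and then the Feynman--Kac formula: for any fixed $t$,
\[
\P^n_\rho\Big(\int_0^t \tfrac{1}{r_n}\sum_x (\tau_x f - \tilde f(\rho))H(s,\tfrac{x}{n})\,ds > \delta\Big)
\le e^{-\lambda \frac{a_n^2}{n^d}\delta}\,
\E^n_\rho\exp\Big\{\lambda \tfrac{a_n^2}{n^d}\int_0^t \tfrac{1}{r_n}\sum_x(\tau_x f-\tilde f(\rho))H(s,\tfrac{x}{n})\,ds\Big\}.
\]
To handle the supremum over $t\in[0,T]$ one first runs the argument for a fixed time with the process run up to time $T$ (the integrand is not sign-definite, but one can bound $\sup_t$ by splitting $[0,T]$ into finitely many subintervals, or more simply observe that the quantity inside is a continuous process and use a maximal inequality for the associated exponential supermartingale). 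The Feynman--Kac bound gives that the expectation above is at most $\exp\{\int_0^T \Lambda_n(s)\,ds\}$, where $\Lambda_n(s)$ is the top of the spectrum of the operator $\lambda \frac{a_n^2}{n^d}\cdot\frac{1}{r_n}V_s + L_n$ in $L^2(\nu_\rho)$, with $V_s(\eta)=\sum_x(\tau_x f(\eta)-\tilde f(\rho))H(s,\tfrac{x}{n})$.

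\emph{Step 2 (Variational / spectral bound).} By the Rayleigh--Ritz variational formula and the fact that $L_a$ is antisymmetric in $L^2(\nu_\rho)$ (so $\langle L_a g,g\rangle_{\nu_\rho}=0$), one has
\[
\Lambda_n(s)\le \sup_{g}\Big\{ \lambda\tfrac{a_n^2}{n^d r_n}\langle V_s g^2\rangle_{\nu_\rho} + n^2\langle L_s g,g\rangle_{\nu_\rho}\Big\},
\]
the supremum over densities $g$ with $\langle g^2\rangle_{\nu_\rho}=1$. Here $n^2\langle L_s g,g\rangle_{\nu_\rho}=-n^2 D_n(g)$ is (minus) the scaled Dirichlet form. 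Now bound $\langle V_s g^2\rangle_{\nu_\rho}$ using the standard one-block/two-blocks estimate: since $\sum_x H(s,\tfrac{x}{n})(\tau_x f-\tilde f(\rho))$ is a sum of $\sim n^d$ centered local terms with slowly varying coefficients, a spectral-gap estimate on boxes of size $\ell$ gives, for each $A>0$,
\[
\langle V_s g^2\rangle_{\nu_\rho}\le \frac{C_H \ell^2}{A}\,n^d D_n(g) + \frac{A n^d}{\ell^2}\,C(f,\ell),
\]
with $C(f,\ell)\to$ a finite constant (the variance-type term); optimizing and then choosing $A$ appropriately makes the first term absorbable into $-n^2 D_n(g)$ provided $\lambda \frac{a_n^2}{n^d r_n}\cdot \frac{n^d}{A}\cdot\ell^2 \le n^2$, i.e. as long as $a_n^2/(r_n n^d)$ times a constant is $\lesssim n^2/(n^d\,\text{stuff})$ — which holds with room to spare because $r_n\gg a_n$ and one is free to send $\lambda\to\infty$ last. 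Collecting, one gets $\Lambda_n(s)\le C \lambda^2 \frac{a_n^2}{n^d}\cdot\frac{a_n}{r_n}$ (up to lower order), hence
\[
\frac{n^d}{a_n^2}\log \P^n_\rho(\cdots > \delta) \le -\lambda\delta + C\lambda^2 \frac{a_n}{r_n} + o(1).
\]

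\emph{Step 3 (Conclusion).} Since $r_n\gg a_n$, the term $C\lambda^2 a_n/r_n\to 0$ for each fixed $\lambda$; taking $\limsup_{n\to\infty}$ gives the bound $-\lambda\delta$, and then letting $\lambda\to\infty$ yields $-\infty$. The supremum over $t$ contributes only a finite multiplicative factor (a factor of the number of subintervals, plus a maximal-inequality constant), which is swallowed by the $\limsup$ and the $\lambda\to\infty$ limit.

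\medskip

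\noindent\textbf{Main obstacle.} The delicate point is Step 2: obtaining the Dirichlet-form bound for $\langle V_s g^2\rangle_{\nu_\rho}$ with the right powers of $n$, $\ell$, $a_n$, $r_n$ so that the "error" term is truly absorbed into $-n^2 D_n(g)$ and the leftover is $o(1)$ after normalization. This requires the standard but careful moving-particle / spectral-gap machinery (equivalence of ensembles and the $O(\ell^2)$ spectral gap on a box), together with tracking that the $v_n sm$ shift in the argument of $H$ is harmless (it only translates the coefficients, which remain uniformly smooth and compactly supported uniformly in $s\le T$). Everything else — the exponential Chebyshev step, antisymmetry of $L_a$, and the final double limit in $\lambda$ — is routine.
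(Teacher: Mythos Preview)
Your approach is significantly more involved than what is needed, and Step~2 has a real gap.

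The paper's proof is entirely elementary and avoids the Feynman--Kac spectral machinery. After exponential Chebyshev (essentially your Step~1, with the supremum over $t$ disposed of simply via $\sup_{t\le T}\big|\int_0^t \cdots\,ds\big|\le \int_0^T|\cdots|\,ds$), the paper applies Jensen's inequality in the \emph{time} variable,
\[
\E^n_\rho\Big[\exp\Big\{\int_0^T |X_s|\,ds\Big\}\Big]\ \le\ \frac{1}{T}\int_0^T \E^n_\rho\big[e^{T|X_s|}\big]\,ds,
\]
uses stationarity to replace $\E^n_\rho$ by the static expectation $E_{\nu_\rho}$, and then exploits the product structure of $\nu_\rho$ together with $e^x\le 1+x+\tfrac{x^2}{2}e^{|x|}$ to factor the exponential moment over sites (a H\"older step handles the finite-range overlaps of $\tau_x f$). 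This yields a bound of the form $(1+C a_n^4/(n^{2d}r_n^2))^{C n^d}$; taking $\tfrac{n^d}{a_n^2}\log$ gives $C a_n^2/r_n^2\to 0$ since $r_n\gg a_n$. No dynamics, no Dirichlet form, no block estimate.

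Your Step~2, by contrast, is not correct as written. The displayed inequality
\[
\langle V_s\, g^2\rangle_{\nu_\rho}\le \frac{C_H \ell^2}{A}\,n^d D_n(g) + \frac{A n^d}{\ell^2}\,C(f,\ell)
\]
is not what the one-block/two-blocks machinery produces: that estimate replaces $\tau_x f$ by $\tilde f$ evaluated at a \emph{block average} $\eta^\ell_x$, not at the constant $\rho$, and hence does not control $\langle(\tau_x f-\tilde f(\rho))\,g^2\rangle_{\nu_\rho}$ for an arbitrary density $g^2$ (take $f(\eta)=\eta_0$ and $g$ concentrated on configurations with atypical local density to see that no local Dirichlet form can compensate). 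If instead one tries the $H^{-1}$ perturbation bound $\sup_g\{\epsilon\langle V g^2\rangle - D(g)\}\le C\epsilon^2\|V\|_{-1}^2$, one runs into the divergence of the random-walk Green's function in dimensions $d=1,2$. So the spectral route does not go through as sketched; the paper's static argument is both simpler and complete.
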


\begin{proof}
By Markov's  inequality,  we only need to show that for any $A > 0$,
\begin{equation}\label{markov 2}
\lim_{n \rightarrow \infty} \frac{n^d}{a_n^2}\log \E^n_\rho \Big[ \exp \Big\{  \int_0^T \Big| \frac{A a_n^2}{n^dr_n} \sum_{x \in \Z^d} \big(\tau_{x} f (\eta (s)) - \tilde{f} (\rho) \big) H(s,\tfrac{x}{n}) \Big|   ds \Big\} \Big] = 0.
\end{equation}
By Jensen's inequality and stationary of the process, the above expectation is bounded by
\[\frac{1}{T} \int_0^T ds E_{\nu_\rho} \Big[ \exp \Big\{   \Big| \frac{A T a_n^2}{n^dr_n} \sum_{x \in \Z^d} \big(\tau_{x} f (\eta) - \tilde{f} (\rho) \big) H(s,\tfrac{x}{n}) \Big|  \Big\} \Big]. \]
Using the basic inequalities $e^{|x|} \leq e^x + e^{-x}$ and
\begin{equation}\label{basic inequality 1}
\limsup_{n \rightarrow \infty} \frac{n^d}{a_n^2} \log (x_n+y_n) =  \max \Big\{\limsup_{n \rightarrow \infty} \frac{n^d}{a_n^2} \log x_n, \limsup_{n \rightarrow \infty} \frac{n^d}{a_n^2} \log y_n \Big\},
\end{equation}
we can remove the absolute value inside the expectation.  Since $\nu_\rho$ is a product measure and $e^x \leq 1 + x + (x^2/2)e^{|x|}$, for $n$ large enough, we can bound the expectation by
\begin{align*}
	&\prod_{x \in \Z^d} \Big( E_{\nu_\rho} \Big[ \exp \Big\{    \frac{C(f) A T a_n^2}{n^dr_n} \big(\tau_{x} f (\eta) - \tilde{f} (\rho) \big) H(s,\tfrac{x}{n})   \Big\} \Big] \Big)^{1/C(f)}\\
	&\leq 	 \Big(  1+ \frac{C(f,A,T,H)a_n^4}{n^{2d}r_n^2}  \Big)^{n^d/C(f,H)}.
\end{align*}
Thus, the limit on the left side of \eqref{markov 2} is bounded by
\[\lim_{n \rightarrow \infty} \frac{C(f,A,T,H) a_n^2}{r_n^2} = 0,\]
which concludes the proof.
\end{proof}

Using the above lemma, we have
\begin{align*}
	\eqref{cal 4} &= \frac{a_n^2}{n^d} \Big\{  \frac{1}{2d} \<\mu^n_s,\Delta H_s\> + \frac{\chi(\rho)}{2d n^d} \sum_{x \in \Z^d} \sum_{i=1}^d  \big(\partial_{u_i} H_s(\tfrac{x-v_nsm}{n})\big)^2 + o_H (n) + \varepsilon^n_{1,H} (s) \Big\},\\
	\eqref{cal 5} &= \frac{a_n^2}{n^d} \Big\{ \frac{\alpha n^{1-\beta}}{da_n}  \sum_{x \in \Z^d} \sum_{i=1}^d [\eta_x (1-\eta_{x+e_i}) - \chi(\rho) ] \partial_{u_i} H_s(\tfrac{x-v_nsm}{n}) + o_H (n) + \varepsilon^n_{2,H} (s) \Big\}.
\end{align*}
Above, $o_H (n)$ denotes some constant depending on $H$ and vanishes as $n \rightarrow \infty$, and for $j = 1,2$, for any $\delta > 0$, 
\begin{equation}\label{error estimate}
\limsup_{n \rightarrow \infty} \frac{n^d}{a_n^2}\log \P^n_\rho \Big( \sup_{0 \leq t \leq T} \Big|\int_0^t \varepsilon^n_{j,H} (s)  ds \Big|   > \delta\Big) = - \infty.
\end{equation}
Since 
\begin{equation}\label{eqn 1}
\eta_x  (1-\eta_{x+e_i}) - \chi(\rho) - (1-2\rho) (\eta_x - \rho) = - (\eta_x - \rho)(\eta_{x+e_i} - \rho) + \rho (\eta_x - \eta_{x+e_i}),
\end{equation}
by summation by parts formula and Lemma \ref{lem:replacement-1},
\begin{align*}
	\frac{\alpha n^{1-\beta}}{da_n}  \sum_{x \in \Z^d} \sum_{i=1}^d [\eta_x (1-\eta_{x+e_i}) - \chi(\rho) -  (1-2\rho) (\eta_x - \rho)] \partial_{u_i} H_s(\tfrac{x-v_nsm}{n})  = -Q^n_s (H) + \varepsilon^n_{3,H} (s),
\end{align*}
where $\varepsilon^n_{3,H} (s)$ satisfies the estimates in \eqref{error estimate}, and
\[Q^n_s (H) = \frac{\alpha n^{1-\beta}}{da_n} \sum_{x \in \Z^d } \sum_{i=1}^d (\eta_x (s) - \rho) (\eta_{x+e_i}(s)-\rho)   \partial_{u} H_s (\tfrac{x-v_nsm}{n}).\]
To sum up, we have shown that 
\begin{multline}\label{mart exp}
	\mc{M}^n_t (H) = \exp \Big\{   \frac{a_n^2}{n^d} \Big(\<\mu^n_t ,H_t\>  - \<\mu^n_0 ,H_0\>  - \int_0^t \<\mu^n_s , \big(\partial_s + \tfrac{1}{2d}\Delta\big)H_s\> ds  \\
	- \frac{\chi(\rho)}{2d} \int_0^t \int_{\R^d}   \sum_{i=1}^d  \big(\partial_{u_i} H_s(u)\big)^2 du\,ds + \int_0^t Q^n_s (H) ds + o_H (n) t + \int_0^t \varepsilon^n_H (s) ds\Big) \Big\},
\end{multline}
where $\varepsilon^n_H (s) = \sum_{j=1}^3 \varepsilon^n_{j,H} (s)$.

The next result states that the time integral of $Q^n_s (H)$ is superexponentially small uniformly in time as $n \rightarrow \infty$.

\begin{proposition}\label{pro: Q^n}
	Under Assumption \ref{assump:beta a_n}, for any $\delta > 0$,
	\[\limsup_{n \rightarrow \infty} \frac{n^d}{a_n^2} \log \P^n_\rho \Big(\sup_{0 \leq t \leq T} \Big|  \int_0^t Q^n_s (H) ds \Big| > \delta \Big) = - \infty.\]
\end{proposition}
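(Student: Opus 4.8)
The plan is to estimate $\sup_{0\le t\le T}|\int_0^t Q^n_s(H)\,ds|$ by a combination of a multiscale (renormalization) replacement and an entropy/Feynman--Kac bound. The quantity $Q^n_s(H)$ is a sum of the quadratic field $(\eta_x-\rho)(\eta_{x+e_i}-\rho)$ tested against $\partial_{u_i}H$, with prefactor $\alpha n^{1-\beta}/(d a_n)$. Since this is a degree-two function with mean zero under $\nu_\rho$, the natural tool is the integration-by-parts/Dirichlet-form estimate underlying the local-equilibrium replacement: one bounds the time integral by $\lambda^{-1}$ times an $H_{-1}$-type norm of the instantaneous current, using that $L_s$ has spectral gap (in a box of size $\ell$) of order $\ell^{-2}$. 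Concretely, I would first reduce, via Markov's inequality and the elementary bound \eqref{basic inequality 1}, to showing that for every $A>0$,
\[
\limsup_{n\to\infty}\frac{n^d}{a_n^2}\log \E^n_\rho\Big[\exp\Big\{A\frac{a_n^2}{n^d}\sup_{0\le t\le T}\Big|\int_0^t Q^n_s(H)\,ds\Big|\Big\}\Big]\le 0,
\]
and then, by the exponential version of Doob's inequality together with the Feynman--Kac formula, it suffices to control, uniformly over densities $g$ with $\int g\,d\nu_\rho=1$,
\[
A\,\frac{\alpha n^{1-\beta}}{d a_n}\int \Big(\sum_{x,i}(\eta_x-\rho)(\eta_{x+e_i}-\rho)\partial_{u_i}H_s(\tfrac{x-v_nsm}{n})\Big)g\,d\nu_\rho \;-\;\frac{n^d}{a_n^2}\,\frac{n^2}{2}\,D_s(\sqrt g),
\]
where $D_s$ is the Dirichlet form of $L_s$ (the asymmetric part $L_a$ contributes only lower order and can be absorbed, exactly as the extra term handled in deriving \eqref{mart exp}).

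The core step is then a block estimate: replace each $\eta_{x+e_i}-\rho$ by its average over a microscopic box of side $\ell$, at the cost of $\ell^2$ times the Dirichlet form (this is where the spectral gap enters), so that $Q^n_s(H)$ is replaced by a smoother field whose variance is easier to dominate; after this, the product $(\eta_x-\rho)\overline{(\eta-\rho)}_{\ell}$ tested against the smooth $H$ has, by a standard second-order chaos / Gaussian-type bound for product Bernoulli measures, a Laplace transform under $\nu_\rho$ controllable provided the coupling constant is small. Tracking the powers of $n$ is the crux: the prefactor is $n^{1-\beta}/a_n$, the sum over $x$ gives a factor $n^d$ against the $L^2$ normalization of $\partial_{u_i}H$, and the block replacement costs $\ell^2 n^2 \cdot n^d/a_n^2$ in Dirichlet form against a gain of order $\ell^{-(d\wedge 2)}$ (the random-walk return probability in dimension $d$) in the variance. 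Optimizing over $\ell$ and imposing that everything is $o(1)$ after multiplying by $n^d/a_n^2$ is precisely what produces the dimension-dependent constraints in Assumption \ref{assump:beta a_n}: the condition $a_n\gg n^{1-\beta/2}$ for $d=1$, $a_n\gg n^{2-\beta}(\log n)^{1/(1-\varepsilon_0)}$ for $d=2$ (the logarithm being the $d=2$ Green's function divergence, cut off at scale depending on $\varepsilon_0$), and $a_n\gg n^{d-\beta}$ for $d\ge3$, together with $\beta>2/3$ ($d=1$) or $\beta>1/2$ ($d\ge2$). I would also use the a priori bound $a_n\ll n^{d+\beta-1}$, i.e. $n^{1-\beta}/a_n\ll n^{1-d}$, to guarantee the leading factor is genuinely small.

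The main obstacle I anticipate is the $d=2$ case and, more generally, making the block-replacement argument for a \emph{product} of two occupation variables rather than a single one: one cannot directly invoke the standard one-block/two-block lemmas, because $(\eta_x-\rho)(\eta_{x+e_i}-\rho)$ is genuinely quadratic, so the replacement must be done for one factor while keeping the other frozen, and then one must still bound the resulting bilinear form in the exponential. The clean way is to write $Q^n_s(H)$ as (discrete gradient of $H$ applied to) a sum over bonds of $\nabla(\eta-\rho)$ times a localized average, integrate by parts once more so the $H_{-1}$ norm appears, and then invoke a Rayleigh-type estimate: the relevant quantity is $\langle (-L_s)^{-1}$ applied to the mean-zero quadratic function, against itself$\rangle$, which in dimension $d$ behaves like $\ell_*^{2-d}$ for the optimal scale $\ell_*$. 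This last variance computation, essentially a second-order perturbation bound for $\nu_\rho$ combined with the heat-kernel decay of the symmetric random walk, is the technical heart; everything else (Doob, Feynman--Kac, absorbing $L_a$, removing absolute values, the final product bound on the Bernoulli Laplace transform) is routine and parallels the derivation of \eqref{mart exp} and the proof of Lemma \ref{lem:replacement-1}.
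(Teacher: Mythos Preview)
Your two-step plan---replace one factor of the quadratic by a box average using Feynman--Kac and the Dirichlet form, then control the smoothed quadratic by a sub-Gaussian/chaos bound under $\nu_\rho$---is exactly the paper's strategy. Two of your implementation details, however, would not close the argument.

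First, the replacement is \emph{not} driven by the spectral gap $\ell^{-2}$. What the paper actually uses is the flow lemma (Lemma~\ref{lem:flow}, from \cite{jara2018non}): one writes $\delta_0-q_\ell$ as the discrete divergence of a flow $\Phi_\ell$ supported on $\Lambda_{2\ell-1}^d$ with $\sum_{z,i}\Phi_\ell(z,e_i)^2\le C\,g_d(\ell)$, where $g_d(\ell)$ equals $\ell$, $\log\ell$, or $1$ according as $d=1$, $d=2$, or $d\ge3$. After the change of variables $\eta\mapsto\eta^{y,y+e_j}$ and Young's inequality with a free parameter, the replacement cost is $\ell^d g_d(\ell)/n^{2\beta}$, not $\ell^2$ times anything. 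This dimension-dependent $g_d(\ell)$ is precisely what produces the different thresholds on $a_n$ in Assumption~\ref{assump:beta a_n}; a uniform $\ell^2$ cost would not reproduce them for $d\ge2$. Your later remark about $\langle(-L_s)^{-1}V,V\rangle\sim\ell_*^{2-d}$ is closer in spirit (it is the Green's-function scale), but still misses the logarithm in $d=2$ and is in any case not the mechanism used.

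Second, the supremum over $t$ cannot be removed by an ``exponential Doob inequality'': $\int_0^t Q^n_s\,ds$ is not a martingale, and bounding $\sup_t|\int_0^t\cdot\,|$ by $\int_0^T|\cdot|$ destroys the gradient structure needed for the Dirichlet-form step. The paper handles the replacement part by discretizing $[0,T]$ into $n^{d+1}$ grid points, using the crude pointwise bound $|Q^n_s-Q^{n,\ell}_s|\le Cn^{d+1-\beta}/a_n$ between grid points, and absorbing the resulting $\log(n^{d+1})$ via the hypothesis $a_n\gg n^{d/2}\sqrt{\log n}$. For the smoothed term $Q^{n,\ell}$ one \emph{can} dominate by $\int_0^T|\cdot|$, apply Jensen and stationarity to reduce to a fixed time, and then use H\"older across blocks separated by distance $>5\ell$ together with a product-of-sub-Gaussians lemma; this yields cost $n^{2d}/(\ell^d a_n^2)$. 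The simultaneous solvability of $\ell^d g_d(\ell)\ll n^{2\beta}$ and $n^{2d}/a_n^2\ll\ell^d$ in an integer $\ell\le n$ is exactly what Assumption~\ref{assump:beta a_n} encodes.
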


To prove the above result, we need to introduce more notation. For an integer $\ell > 0$, let $p_\ell$ be the uniform measure on $\Lambda_\ell^d := \{0,1,\ldots,\ell-1\}^d$,  and let $q_\ell$ be the convolution of $p_\ell$ with itself,
\[p_\ell (x) = \ell^{-d} \mathbf{1} \{x \in \Lambda_\ell^d\}, \quad q_\ell (x) = \sum_{y \in \Z^d} p_\ell (x-y) p_\ell (y), \quad x \in \Z^d.\]
Notice that  $q_\ell$ is supported in $\Lambda_{2\ell-1}^d$. Let $\delta_0$ be the Dirac measure at the origin. The following result was proved in \cite[Lemma 3.2]{jara2018non}.

\begin{lemma}\label{lem:flow}
There exists some function $\Phi_\ell: \Z^d \times \{e_i\}_{1 \leq i \leq d} \rightarrow \R$ such that for any $z \in \Z^d$,
\[\delta_0 (z) - q_\ell (z) = \sum_{i=1}^d \{\Phi_\ell (z,e_i) - \Phi_\ell (z-e_i,e_i)\}.\] 
Moreover, $\Phi_\ell (z,e_i) = 0$ for $z \notin \Lambda_{2\ell-1}^d$, and there exists some  constant $C_0 = C_0 (d)$  such that
\[\sum_{i=1}^d \sum_{z \in \Z^d } \Phi_\ell (z,e_i)^2 \leq C_0 g_d (\ell),\]
where
\[g_d (\ell) = \begin{cases}
	\ell, \quad &\text{if } d=1,\\
	\log \ell, \quad &\text{if } d=2,\\
	1, \quad &\text{if } d\geq 3.
\end{cases}\]
\end{lemma}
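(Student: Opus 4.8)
The plan is to realize $\Phi_\ell$ as (a compactly supported modification of) the discrete gradient of the solution of a lattice Poisson equation, and to read off the $L^2$ bound from an energy computation. Set $w:=\delta_0-q_\ell$; since $\delta_0$ and $q_\ell$ are both probability measures, $\sum_{z}w(z)=0$, and $w$ is supported in $\Lambda_{2\ell-1}^d$. So the task is to exhibit a flow $\Phi_\ell$, supported in $\Lambda_{2\ell-1}^d$, with $\sum_i\{\Phi_\ell(z,e_i)-\Phi_\ell(z-e_i,e_i)\}=w(z)$ and $\sum_{i,z}\Phi_\ell(z,e_i)^2\le C_0 g_d(\ell)$. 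For $d=1$ everything is explicit: $\Phi_\ell(z):=\sum_{y\le z}w(y)$ satisfies $\Phi_\ell(z)-\Phi_\ell(z-1)=w(z)$, vanishes for $z<0$ and (using $\sum_y w(y)=0$) for $z\ge 2\ell-2$, and takes values in $[0,1]$ in between, so $\sum_z\Phi_\ell(z)^2\le 2\ell=2g_1(\ell)$.

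For general $d$ I would first compute the energy of the \emph{free} minimal-energy flow on $\Z^d$ with divergence $w$, namely the one with $\widehat\Phi(\theta,e_i)=(1-e^{i\theta_i})\,\widehat w(\theta)\big/\sum_{j=1}^d 2(1-\cos\theta_j)$ for $\theta\in[-\pi,\pi]^d$. By Parseval,
\[
\sum_{i=1}^d\sum_{z\in\Z^d}\Phi(z,e_i)^2=\frac{1}{(2\pi)^d}\int_{[-\pi,\pi]^d}\frac{|\widehat w(\theta)|^2}{\sum_{j=1}^d 2(1-\cos\theta_j)}\,d\theta .
\]
Writing $\widehat w(\theta)=1-\widehat q_\ell(\theta)=1-E\big[e^{i\theta\cdot(X+Y)}\big]$ with $X,Y$ i.i.d.\ uniform on $\Lambda_\ell^d$, one has simultaneously $|\widehat w(\theta)|\le 2$ and $|\widehat w(\theta)|\le E|\theta\cdot(X+Y)|\le 2\sqrt d\,\ell\,|\theta|$; combined with $\sum_j 2(1-\cos\theta_j)\ge\tfrac4{\pi^2}|\theta|^2$ on $[-\pi,\pi]^d$ this bounds the integrand by $C_d\min(1,\ell^2|\theta|^2)/|\theta|^2$. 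Splitting the integral at $|\theta|\sim 1/\ell$ and using polar coordinates, the region $|\theta|\le 1/\ell$ contributes $O(\ell^{2-d})$ and the region $1/\ell\le|\theta|\le\pi\sqrt d$ contributes $O(\ell)$, $O(\log\ell)$, $O(1)$ according as $d=1$, $d=2$, $d\ge 3$; in each case the total is $O(g_d(\ell))$. The three regimes are exactly the recurrence ($d\le2$) versus transience ($d\ge3$) dichotomy: the energy is of the order of the effective resistance between the origin and a unit charge spread over scale $\ell$.

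It remains to replace this (infinitely supported) flow by one supported in $\Lambda_{2\ell-1}^d$ without spoiling the bound. To that end I would solve $\Delta_\Lambda u=w$ on $\Lambda:=\Lambda_{2\ell-1}^d$, where $\Delta_\Lambda$ is the Laplacian of the subgraph of $\Z^d$ induced on $\Lambda$ (invertible on mean-zero functions, which $w$ is), and set $\Phi_\ell(z,e_i)=u(z+e_i)-u(z)$ when both endpoints lie in $\Lambda$ and $\Phi_\ell(z,e_i)=0$ otherwise. Then $\Phi_\ell(z,e_i)=0$ for $z\notin\Lambda$, and a direct check gives $\sum_i\{\Phi_\ell(z,e_i)-\Phi_\ell(z-e_i,e_i)\}=\Delta_\Lambda u(z)=w(z)$ for $z\in\Lambda$ while both sides vanish for $z\notin\Lambda$; moreover $\sum_{i,z}\Phi_\ell(z,e_i)^2=\langle\nabla u,\nabla u\rangle=\langle u,-\Delta_\Lambda u\rangle=\langle u,q_\ell-\delta_0\rangle$. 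One then bounds $\langle u,q_\ell-\delta_0\rangle$ by $C_0 g_d(\ell)$ using standard estimates on the Green function (equivalently the potential kernel) of the box, which at the scale $\ell$ set by $q_\ell$ behaves like that of $\Z^d$; an equivalent route is to take the free flow of the previous paragraph, cut off its potential by a smooth bump supported in a box of side $\Theta(\ell)$, and repair the resulting boundary defect---which is mean-zero and of order $1/\ell$ in $\ell^1$---by a low-energy correction flow near the boundary.

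I expect the last step to be the real obstacle. The Fourier computation only yields the optimal flow on all of $\Z^d$, which has slowly decaying dipole-type tails and is not compactly supported, whereas the crude alternatives that \emph{are} compactly supported---for instance telescoping $\delta_0^{\otimes d}-q_\ell^{\otimes d}$ into one-dimensional flows---only give $\sum_{i,z}\Phi_\ell(z,e_i)^2=O(\ell)$, which is off by a factor $\ell$ for $d\ge 2$. Showing that one can localize to a box of side comparable to $\ell$ while keeping the sharp $O(g_d(\ell))$ bound---i.e.\ that the box and its Green function already ``see'' the transient behavior at scale $\ell$---is the nontrivial potential-theoretic point; this is precisely what is carried out in \cite[Lemma 3.2]{jara2018non}, to which we refer for the details.
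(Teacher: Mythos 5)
First, note that the paper does not actually prove this lemma: it is quoted verbatim from \cite[Lemma 3.2]{jara2018non}, so the paper's ``proof'' is a citation. Measured against that, your proposal does more work, and the parts you complete are correct: the explicit $d=1$ primitive $\Phi_\ell(z)=\sum_{y\le z}w(y)$ is fine (it vanishes outside $\Lambda_{2\ell-1}$ because $w$ has total mass zero, is bounded by $1$, and gives energy $O(\ell)$), and the Fourier/Parseval computation of the energy of the free minimal flow on $\Z^d$, using $|\widehat w(\theta)|\le\min(2,C_d\ell|\theta|)$ and $\sum_j 2(1-\cos\theta_j)\gtrsim|\theta|^2$, correctly yields the order $g_d(\ell)$; the interpretation as an effective resistance between the origin and a charge smeared at scale $\ell$ is also the right heuristic.

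However, as a standalone proof there is a genuine gap, and it sits exactly where you say it does: the lemma requires a flow supported in $\Lambda_{2\ell-1}^d$ with energy $O(g_d(\ell))$, and for $d\ge2$ neither of your completed constructions delivers this --- the free flow has non-compactly-supported dipole tails, and the naive compactly supported telescoping only gives $O(\ell)$. Your proposed fix (solve the Neumann problem $\Delta_\Lambda u=w$ on the box, take $\Phi_\ell=\nabla u$ on interior edges, and bound the energy $\langle u,q_\ell-\delta_0\rangle$ by Green-function estimates for the box) is structurally sound: the divergence identity and the identity $\sum_{i,z}\Phi_\ell(z,e_i)^2=\langle w,G_\Lambda w\rangle$ are correct, and Neumann Green-function asymptotics on a box of side $\Theta(\ell)$ (namely $G_\Lambda(x,y)\lesssim |x-y|^{2-d}+O(\ell^{2-d})$ for $d\ge3$, $\lesssim\log(\ell/(|x-y|+1))+O(1)$ for $d=2$) would indeed close the argument. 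But you do not establish those estimates; you assert them as ``standard'' and then defer to the very reference \cite[Lemma 3.2]{jara2018non} that the paper cites. So the crucial potential-theoretic step that makes the lemma nontrivial for $d\ge 2$ is identified but not proved; either supply the box Green-function bounds (or an equivalent multiscale/telescoping construction through intermediate scales $q_{2^k}$, which is closer to how such flow lemmas are usually proved), or accept that your argument, like the paper's, is ultimately a proof by reference.
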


For $\eta \in \mc{X}_d$, define $\eta^\ell_x$ as the spatial average of $\eta_{x+\cdot}$ with respect to the weight function $q_\ell$, 
\[\eta_x^\ell = \sum_{z \in \Z^d } \eta_{x+z} q_\ell (z), \quad x \in \Z^d.\]
Finally, for an integer $\ell > 0$, let
\[Q^{n,\ell}_s (H) = \frac{\alpha n^{1-\beta}}{da_n} \sum_{x \in \Z^d } \sum_{i=1}^d (\eta_x (s) - \rho) (\eta_{x+e_i}^\ell (s)-\rho)   \partial_{u} H_s (\tfrac{x-v_nsm}{n}).\]
To prove Proposition \ref{pro: Q^n},  we shall take $\ell = \ell (n) \leq n$ such that
\begin{equation}\label{ell}
\ell^d g_d (\ell) \ll n^{2\beta}, \quad \frac{n^{2d}}{a_n^2} \ll \ell^d.
\end{equation}
Such $\ell$ exists by Assumption \ref{assump:beta a_n}.  Indeed, let
\[M_n = \begin{cases}
	n^{\beta} \wedge n, \quad &\text{if } d=1,\\
	n^{2\beta/d} \wedge n, \quad &\text{if } d \geq 2,
\end{cases}, \quad m_n = \frac{n^2}{a_n^{2/d}}.\]
Then, Assumption \ref{assump:beta a_n} ensures that $M_n \gg m_n$ and that for $d=2$,
\[\Big(\frac{M_n}{m_n}\Big)^{1-\varepsilon_0} \gg \log M_n.\]
Thus, one can take $\ell (n) = M_n^{\varepsilon_0} m_n^{1-\varepsilon_0}$. 

\begin{proof}[Proof of Proposition \ref{pro: Q^n}]
By \eqref{basic inequality 1}, we only need to prove that, for any $\delta > 0$, 
\begin{align}
\limsup_{n \rightarrow \infty} \frac{n^d}{a_n^2} \log \P^n_\rho \Big(\sup_{0 \leq t \leq T} \Big|  \int_0^t [Q^n_s (H) -  Q^{n,\ell}_s (H)] ds \Big| > \delta \Big) = - \infty,\label{super exp 1}\\
\limsup_{n \rightarrow \infty} \frac{n^d}{a_n^2} \log \P^n_\rho \Big(\sup_{0 \leq t \leq T} \Big|  \int_0^t Q^{n,\ell}_s (H) ds \Big| > \delta \Big) = - \infty.\label{super exp 2}
\end{align}
We remark that the first constraint on $\ell$ in \eqref{ell} is used to prove \eqref{super exp 1}, while the second one is used to prove \eqref{super exp 2}.

We first prove \eqref{super exp 1}. By Markov's inequality, we only need to show that for any $A > 0$,
\begin{equation}\label{exp 1}
	\lim_{n \rightarrow \infty} \frac{n^d}{a_n^2} \log \E^n_\rho \Big[ \exp \Big\{ \sup_{0 \leq t \leq T} \Big|  \frac{Aa_n^2}{n^d}\int_0^t [Q^n_s (H) -  Q^{n,\ell}_s (H)] ds \Big|  \Big\}  \Big] = 0.
\end{equation}
Let $t_i = iT/n^{d+1}$ for $0 \leq i \leq n^{d+1}$.  Since
\[|Q^n_s (H) - Q_s^{n,\ell} (H)| \leq \frac{C(H,d)n^{d+1-\beta}}{a_n},\]
we can replace the supremum over time $0 \leq t \leq T$ by over $\{t_i\}$ in \eqref{exp 1} with an error bounded by $C(H,A,d) / (a_n n^\beta)$. Thus, we only need to show that 
\begin{equation*}
	\lim_{n \rightarrow \infty} \frac{n^d}{a_n^2} \log \E^n_\rho \Big[ \exp \Big\{ \sup_{0 \leq i \leq n^{d+1}} \Big|  \frac{Aa_n^2}{n^d}\int_0^{t_i} [Q^n_s (H) -  Q^{n,\ell}_s (H)] ds \Big|  \Big\}  \Big] = 0.
\end{equation*}
Since for any $K$ random variables $X_1, \ldots, X_K$,
\begin{equation}\label{basic inequality 2}
	\log E \big[\exp \{\max_{1 \leq i \leq K} X_i\} \big] \leq \log \Big(\sum_{i=1}^K E \big[  \exp \{ X_i\} \big] \Big) \leq \log K + \max_{1 \leq i \leq K} \log  E \big[  \exp \{ X_i\} \big],
\end{equation}
and $a_n \gg n^{d/2} \sqrt{\log n}$,  we can first put the supremum over $0 \leq i \leq n^{d+1}$ outside, then  remove the absolute value, and finally only need to prove that
\begin{equation}\label{exp 2}
	\lim_{n \rightarrow \infty} \sup_{0 \leq i \leq n^{d+1}} \frac{n^d}{a_n^2} \log \E^n_\rho \Big[ \exp \Big\{    \frac{Aa_n^2}{n^d}\int_0^{t_i} [Q^n_s (H) -  Q^{n,\ell}_s (H)] ds \Big\}  \Big] = 0.
\end{equation}
By Feynman-Kac formula \cite[Appendix 1.7]{klscaling},
\begin{multline}\label{feynman kac 1}
	\frac{n^d}{a_n^2} \log \E^n_\rho \Big[ \exp \Big\{    \frac{Aa_n^2}{n^d}\int_0^{t_i} [Q^n_s (H) -  Q^{n,\ell}_s (H)] ds \Big\}  \Big]
	\leq 
\frac{\alpha A n^{1-\beta}}{d a_n} \int_0^{t_i} ds\\ \sup_{f:\;\nu_\rho{\rm-density}}  \Big\{  \int_{\mc{X}_d} \sum_{x \in \Z^d } \sum_{i=1}^d (\eta_x  - \rho) (\eta_{x+e_i} - \eta_{x+e_i}^\ell )   \partial_{u} H_s (\tfrac{x-v_nsm}{n}) f(\eta) d \nu_{\rho} - \frac{dn^{d+1+\beta}}{\alpha Aa_n} D (\sqrt{f})\Big\},
\end{multline}
where the supremum is over local functions $f \geq 0$ on $\mc{X}_d$ such that $E_{\nu_{\rho}} [f] = 0$, and $D (\sqrt{f})$ is the Dirichlet form of the function $f$ with respect to the generator $L_s$ under $\nu_{\rho}$,
\[D(\sqrt{f}) = E_{\nu_{\rho}} [\sqrt{f} (-L_s) \sqrt{f}] = \frac{1}{4d} \sum_{x \in \Z^d} \sum_{i=1}^d E_{\nu_{\rho}} \big[\big(\sqrt{f(\eta^{x,x+e_i})} - \sqrt{f(\eta)}\big)^2\big].\]
Since 
\begin{align*}
	\eta_{x+e_i} - \eta_{x+e_i}^\ell &= \sum_{z \in \Z^d } \eta_{x+e_i+z} [\delta_0 (z) - q_\ell (z)]\\ 
	&= \sum_{z \in \Z^d} \sum_{j=1}^d \eta_{x+e_i+z} [\Phi_\ell (z,e_j) - \Phi_\ell (z-e_j,e_j)] \\
	&= \sum_{z \in \Z^d} \sum_{j=1}^d [\eta_{x+e_i+z} - \eta_{x+e_i+z+e_j}] \Phi_\ell (z,e_j),
\end{align*}
by changing of variables $\eta \mapsto \eta^{x+e_i+z,x+e_i+z+e_j}$ and Young's inequality, the integral inside the supremum in \eqref{feynman kac 1} is bounded by
\begin{align*}
 &\frac{1}{2}\sum_{x\in \Z^d, z \in \Lambda_{2\ell-1}^d } \sum_{i,j=1}^d \Phi_\ell (z,e_j)  \partial_{u} H_s (\tfrac{x-v_nsm}{n}) \\
 &\qquad \times  \int_{\mc{X}_d} (\eta_x  - \rho) [\eta_{x+e_i+z} - \eta_{x+e_i+z+e_j}] [ f(\eta) - f(\eta^{x+e_i+z,x+e_i+z+e_j})]d \nu_{\rho}\\
 & \leq \frac{1}{4B}\sum_{x\in \Z^d, z \in \Lambda_{2\ell-1}^d } \sum_{i,j=1}^d \Phi_\ell (z,e_j)^2   \partial_{u} H_s (\tfrac{x-v_nsm}{n})^2 \int_{\mc{X}_d} \big[ \sqrt{f(\eta)} + \sqrt{f(\eta^{x+e_i+z,x+e_i+z+e_j})} \big]^2d \nu_{\rho} \\
 &\quad + \frac{B}{4}\sum_{x\in \Z^d, z \in \Lambda_{2\ell-1}^d } \sum_{i,j=1}^d     \int_{\mc{X}_d} \big[ \sqrt{f(\eta)} - \sqrt{f(\eta^{x+e_i+z,x+e_i+z+e_j})} \big]^2d \nu_{\rho}
\end{align*}
for any $B > 0$.  By Lemma \ref{lem:flow} and the fact that $f$ is a $\nu_{\rho}$-density,  the last expression is bounded by
\[\frac{C(H,d,C_0)}{B} n^d g_d (\ell) + Bd^2 (2\ell)^d D(\sqrt{f}).\]
Taking $B$ such that \[B d^2 (2\ell)^d= \frac{dn^{d+1+\beta}}{\alpha A a_n},\]
we bound the limit  on the left side of \eqref{exp 2} by
\[\lim_{n \rightarrow \infty} C(H,d,C_0,A) \frac{\ell^d g_d(\ell)}{n^{2\beta}} = 0,\]
thus proving \eqref{exp 2}. 

Now, we prove \eqref{super exp 2}. As before, we only need to prove that, for any $A > 0$, 
\begin{equation*}
	\lim_{n \rightarrow \infty} \frac{n^d}{a_n^2} \log \E^n_\rho \Big[ \exp \Big\{   \frac{Aa_n^2}{n^d}\int_0^T  \big|  Q^{n,\ell}_s (H)\big| ds   \Big\}  \Big] = 0.
\end{equation*}
By Jensen's inequality, stationary of the process and \eqref{basic inequality 1}, we only need to show that
\begin{multline}\label{exp 3}
	\lim_{n \rightarrow \infty} \frac{n^d}{a_n^2} \log \Big( \frac{1}{T}  \int_0^T ds \\
	E_{\nu_\rho} \Big[ \exp \Big\{   \frac{ATa_n\alpha n^{1-\beta}}{dn^d}  \sum_{x \in \Z^d } \sum_{i=1}^d (\eta_x - \rho) (\eta_{x+e_i}^\ell -\rho)   \partial_{u} H_s (\tfrac{x-v_nsm}{n}) \Big\}  \Big] \Big) = 0.
\end{multline}
By definition, 
\begin{align*}
	&\sum_{x \in \Z^d } (\eta_x - \rho) (\eta_{x+e_i}^\ell -\rho)   \partial_{u} H_s (\tfrac{x-v_nsm}{n}) \\
	&= \sum_{x,y,z \in \Z^d } (\eta_x - \rho) (\eta_{x+e_i+z}^\ell -\rho) p_\ell (z-y) p_\ell (y)  \partial_{u} H_s (\tfrac{x-v_nsm}{n}).
\end{align*}
Making the change of variables $z \mapsto z-x-e_i, y \mapsto y-x-e_i$, the last line equals
\[\sum_{y \in \Z^d } \Big(\sum_{x \in \Z^d}(\eta_x - \rho)p_\ell (y-x-e_i) \partial_{u} H_s (\tfrac{x-v_nsm}{n})\Big)\Big(\sum_{z \in \Z^d}(\eta^\ell_z - \rho) p_\ell (z-y)\Big).\]
Notice that if  $|y-y^\prime| := \max_{1 \leq i \leq d} |y_i - y^\prime_i|> 5 \ell$,  then for $\ell$ large enough, 
\[\Big(\sum_{x \in \Z^d}(\eta_x - \rho)p_\ell (y-x-e_i) \partial_{u} H_s (\tfrac{x-v_nsm}{n})\Big)\Big(\sum_{z \in \Z^d}(\eta^\ell_z - \rho) p_\ell (z-y)\Big)\]
and
\[\Big(\sum_{x \in \Z^d}(\eta_x - \rho)p_\ell (y^\prime-x-e_i) \partial_{u} H_s (\tfrac{x-v_nsm}{n})\Big)\Big(\sum_{z \in \Z^d}(\eta^\ell_z - \rho) p_\ell (z-y^\prime)\Big)\]
are independent under $\nu_\rho$.  By H{\"o}lder's inequality, the expectation in \eqref{exp 3} is bounded by
\begin{multline*}
\prod_{y \in \Z^d} \prod_{i=1}^{d} \Big(E_{\nu_\rho} \Big[ \exp \Big\{ \frac{ATa_n\alpha n^{1-\beta}d(5\ell)^d}{n^d} \\
\times \Big(\sum_{x \in \Z^d}(\eta_x - \rho)p_\ell (y-x-e_i) \partial_{u} H_s (\tfrac{x-v_nsm}{n})\Big)\Big(\sum_{z \in \Z^d}(\eta^\ell_z - \rho) p_\ell (z-y)\Big) \Big\}\Big]\Big)^{\frac{1}{d(5\ell)^d}}.
\end{multline*}
We say a random variable $X$ is sub-Gaussian of order $\sigma^2$ if 
\[\log E[e^{\theta X}] \leq \frac{\theta^2 \sigma^2}{2}, \quad \forall \theta \in \R.\]
Then, one can easily check that $\sum_{x \in \Z^d}(\eta_x - \rho)p_\ell (y-x-e_i) \partial_{u} H_s (\tfrac{x-v_nsm}{n})$ and $\sum_{z \in \Z^d}(\eta^\ell_z - \rho) p_\ell (z-y)$ are both sub-Gaussian of order $C(H) \ell^{-d}$.  Since by Assumption \ref{assump:beta a_n}, $a_n \ll n^{d+\beta-1}$, using \cite[Lemma F.8]{jara2018non}, for $n$ large enough, the expectation in \eqref{exp 3} is bounded by
\[C(A,T,H,\alpha,d)^{\frac{n^d}{\ell^d}}.\]
Thus, the limit in \eqref{exp 3} is bounded by
\[C(A,T,H,\alpha,d) \lim_{n \rightarrow \infty}  \frac{n^{2d}}{\ell^d a_n^2} = 0,\]
which concludes the proof.
\end{proof}

\subsection{Proof of the upper bound} In this subsection, we prove the moderate deviations upper bound \eqref{upper hd}. The strategy is as follows: we first prove the upper bound for  compact sets in $\mc{D} ([0,T],\mc{S}')$, and then prove the exponential tightness of the sequence $\{\mu^n_t: 0 \leq t \leq T\}$.  Finally, from the above two results, we conclude the proof of the upper bound for closed sets in $\mc{D} ([0,T],\mc{S}')$ immediately.

We start with the former. Let $K \subset \mc{D} ([0,T];\mc{S}')$ be compact and $H \in \mc{C}^{1,\infty}_c ([0,T] \times \R^d)$.  By \eqref{mart exp} and Proposition \ref{pro: Q^n},  for any $\delta > 0$,
\begin{align*}
	\limsup_{n \rightarrow \infty} \frac{n^d}{a_n^2} \log \P^n_\rho \Big( \mu^n\in K \Big) = \limsup_{n \rightarrow \infty} \frac{n^d}{a_n^2} \log \E^n_\rho \Big[ \Mcal^n_T (H) \Mcal^n_T (H)^{-1} \mathbf{1}_{\{\mu^n \in K, |\mc{R}^n_T(H) | \leq \delta\}} \Big],
\end{align*}
where
\begin{equation}\label{rnt}
\mc{R}^n_t (H) = \int_0^t Q^n_s (H) ds + o_H (n) t + \int_0^t \varepsilon^n_H (s) ds.
\end{equation}
On the event $\{\mu^n \in K, |\mc{R}^n_T (H) | \leq \delta\}$, for any $\phi \in \mc{C}_c^\infty (\R^d)$, $\Mcal^n_T (H)^{-1}$ is bounded from above by
\begin{equation*}
	\exp \Big\{  \frac{a_n^2}{n^d} \sup_{\mu \in K}\Big( - l (\mu,H) + \frac{\chi(\rho)}{2d} \sum_{i=1}^d  \|\partial_{u_i} H\|_{L^2([0,T] \times \R^d)}^2  + \delta - \mu_0 (\phi)\Big) \Big\} \exp \Big\{  \frac{a_n^2}{n^d} \mu_0 (\phi) \Big\}.
\end{equation*}
Therefore,
\begin{align*}
\limsup_{n \rightarrow \infty} \frac{n^d}{a_n^2} \log \P^n_\rho \Big( \mu^n\in K \Big) 
\leq -&\inf_{\mu \in K}\Big( l (\mu,H) - \frac{\chi(\rho)}{2d} \sum_{i=1}^d \|\partial_{u_i} H\|_{L^2([0,T] \times \R^d)}^2  + \mu_0 (\phi)\Big) \\
	&+  \limsup_{n \rightarrow \infty} \frac{n^d}{a_n^2} \log \E^n_\rho \Big[ \exp \Big\{  \frac{a_n^2}{n^d} \mu_0 (\phi) \Big\} \Big] + \delta.
\end{align*}
For the initial density fluctuation field,  since $a_n \ll n^d$, by Taylor's expansion, 
\begin{equation}\label{mu0 converge}
\begin{aligned}
	\limsup_{n \rightarrow \infty} \frac{n^d}{a_n^2} &\log \E^n_\rho \Big[ \exp \Big\{  \frac{a_n^2}{n^d} \mu^n_0 (\phi) \Big\} \Big] \\
	&= 	\limsup_{n \rightarrow \infty} \frac{n^d}{a_n^2} \sum_{x \in \Z^d} \log E_{\nu_\rho} \Big[ \exp \Big\{  \frac{a_n}{n^d} (\eta_x - \rho) \phi(x/n) \Big\} \Big] \\
	&= \limsup_{n \rightarrow \infty} \frac{n^d}{a_n^2} \sum_{x \in \Z^d} \log \Big(1+ \frac{a_n^2 \chi(\rho)}{2n^{2d}} \phi(x/n)^2 + O_{\phi} (a_n^3/n^{3d})\Big)
	\\&=\frac{1}{2}\chi(\rho)  \|\phi\|_{L^2 (\R^d)}^2.
\end{aligned}
\end{equation}
By first letting $\delta \rightarrow 0$, and then optimizing over $H \in \mc{C}_c^{1,\infty} ([0,T] \times \R^d)$ and $\phi \in \mc{C}_c^\infty (\R^d)$,
\begin{align*}
&\limsup_{n \rightarrow \infty} \frac{n^d}{a_n^2} \log \P^n_\rho \Big( \mu^n\in K \Big) \\
	& \leq - \sup_{H \in \mc{C}_c^{1,\infty} ([0,T] \times \R^d), \atop \phi \in \mc{C}_c^\infty (\R^d)} \inf_{\mu \in K}\Big\{ l (\mu,H) - \frac{\chi(\rho)}{2d} \sum_{i=1}^d \|\partial_{u_i} H\|_{L^2([0,T] \times \R^d)}^2  + \mu_0 (\phi) - \frac{\chi(\rho) }{2} \|\phi\|_{L^2 (\R^d)}^2\Big\}.
\end{align*}
Notice that the term inside the above brace is continuous and linear in $\mu$ for fixed $H,\phi$, and continuous and concave in $H,\phi$ for fixed $\mu$. Thus, by the Minimax theorem (see \cite{gao2003moderate} for example), we can  exchange the order of the above supremum and infimum.  This concludes the proof of  the upper bound  \eqref{upper hd} for compact sets.

It remains to prove that the sequence $\{\mu^n\}$ is exponentially tight, which holds true once we can show  the following two estimates: 
\begin{enumerate}[(i)]
	\item for any $H \in \mc{S} $,
	\begin{equation}\label{exp-tight-1}
		\limsup_{A \rightarrow \infty} \limsup_{n \rightarrow \infty} \frac{n^d}{a_n^2} \log \P^n_\rho \Big( \sup_{0 \leq t \leq T} | \<\mu^n_t,H\>| > A\Big) = - \infty;
	\end{equation}
	\item for any $H \in \mc{S} $, and for any $\varepsilon > 0$, 
	\begin{equation}\label{exp-tight-2}
		\limsup_{\delta \rightarrow 0} \limsup_{n \rightarrow \infty} \sup_{\tau \in \mc{T}} \frac{n^d}{a_n^2} \log \P^n_\rho \Big( \sup_{0 < t \leq \delta} |\<\mu^n_{t+\tau} - \mu^n_\tau,H\>| > \varepsilon\Big) = - \infty,
	\end{equation}
	where $\mc{T}$ is the set of all stopping times bounded by $T$.
\end{enumerate}

For any $H \in \mc{S}$, we first rewrite \eqref{mart exp} as
\begin{equation}\label{exp martingale 3}
\begin{aligned}
	\frac{n^d}{a_n^2}	\log \mc{M}^n_t (H) =    &\<\mu^n_t ,H\> -    \<\mu^n_0 ,H\> - \int_0^t \<\mu^n_s, \tfrac{1}{2d} \Delta H\> ds 
\\&-\frac{\chi(\rho)}{2d} \int_0^t \int_{\R^d}   \sum_{i=1}^d  \big(\partial_{u_i} H_s(u)\big)^2 du\,ds + \mc{R}^n_t (H),
\end{aligned}
\end{equation}
where $\mc{R}^n_t (H)$ is introduced in \eqref{rnt}. Since the penultimate term on the right hand side is bounded by $C(H,d) t$, it satisfies the estimates in \eqref{exp-tight-1} and \eqref{exp-tight-2} obviously. 
The second and the last terms on the right hand side also satisfy the above estimates by \eqref{mu0 converge} and Proposition \ref{pro: Q^n}.  For the third one on the right hand side, we have the following lemma.

\begin{lemma}\label{lem:exp-tight-2}
	For any $H \in \mc{S}$,  	and for any $\varepsilon > 0$, 
	\begin{align*}
		\limsup_{A \rightarrow \infty} \limsup_{n \rightarrow \infty} \frac{n^d}{a_n^2} \log \P^n_\rho \Big( \sup_{0 \leq t \leq T} \Big|\int_0^t \<\mu^n_s,  H\> ds \Big| > A\Big) = - \infty,\\
				\limsup_{\delta \rightarrow 0} \limsup_{n \rightarrow \infty} \frac{n^d}{a_n^2} \log \P^n_\rho \Big( \sup_{0 < t \leq  \delta}\Big|\int_s^{s+t} \<\mu^n_{r}, H\> dr \Big| > \varepsilon\Big) = - \infty.
	\end{align*}
\end{lemma}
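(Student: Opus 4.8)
The plan is to bound the supremum over $t$ by an $L^1$-in-time quantity and then run an exponential Chebyshev argument which, after Jensen's inequality and stationarity, reduces matters to a single static exponential moment of a linear statistic of $\nu_\rho$.

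First, using the elementary inequalities $\sup_{0\le t\le T}\big|\int_0^t f(s)\,ds\big|\le\int_0^T|f(s)|\,ds$ and $\sup_{0<t\le\delta}\big|\int_s^{s+t}f(r)\,dr\big|\le\int_s^{s+\delta}|f(r)|\,dr$, it is enough to control the probabilities of $\{\int_0^T|\<\mu^n_s,H\>|\,ds>A\}$ and $\{\int_s^{s+\delta}|\<\mu^n_r,H\>|\,dr>\varepsilon\}$. For the first, fix $\lambda>0$; by the exponential Chebyshev inequality,
\[
\P^n_\rho\Big(\int_0^T|\<\mu^n_s,H\>|\,ds>A\Big)\le e^{-\lambda\frac{a_n^2}{n^d}A}\,\E^n_\rho\Big[\exp\Big\{\lambda\frac{a_n^2}{n^d}\int_0^T|\<\mu^n_s,H\>|\,ds\Big\}\Big].
\]
Since $\nu_\rho$ is reversible for $L_s$ and invariant for $L_a$, it is invariant for $L_n$ and the process is stationary; Jensen's inequality applied to the normalized time integral $\tfrac1T\int_0^T(\cdot)\,ds$ together with stationarity bounds the expectation above by $\tfrac1T\int_0^T E_{\nu_\rho}\big[\exp\{\lambda T\tfrac{a_n}{n^d}\,\big|\sum_x(\eta_x-\rho)H(\tfrac{x-v_nsm}{n})\big|\}\big]\,ds$.

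It then remains to bound this static moment. Under $\nu_\rho$ the variables $\eta_x-\rho$ are independent, centered and bounded, so $Y_s:=\sum_x(\eta_x-\rho)H(\tfrac{x-v_nsm}{n})$ is sub-Gaussian of order $C(H)\sum_x H(\tfrac{x-v_nsm}{n})^2\le C(H)n^d$, uniformly in $s$ (the last bound holds since $H\in\mc{S}$). Hence $E_{\nu_\rho}[e^{\theta|Y_s|}]\le 2e^{\theta^2 C(H)n^d/2}$ for every $\theta$, and taking $\theta=\lambda Ta_n/n^d$ gives
\[
\frac{n^d}{a_n^2}\log\E^n_\rho\Big[\exp\Big\{\lambda\frac{a_n^2}{n^d}\int_0^T|\<\mu^n_s,H\>|\,ds\Big\}\Big]\le\frac{n^d}{a_n^2}\log 2+\tfrac12 C(H)T^2\lambda^2,
\]
whose right side tends to $\tfrac12 C(H)T^2\lambda^2$ as $n\to\infty$ because $a_n\gg n^{d/2}$. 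Consequently $\limsup_n\frac{n^d}{a_n^2}\log\P^n_\rho(\cdots)\le-\lambda A+\tfrac12 C(H)T^2\lambda^2$, and letting $A\to\infty$ with $\lambda$ fixed proves the first estimate. The second estimate follows by repeating the argument with $[0,T]$ replaced by $[s,s+\delta]$: one obtains $\frac{n^d}{a_n^2}\log\P^n_\rho\big(\int_s^{s+\delta}|\<\mu^n_r,H\>|\,dr>\varepsilon\big)\le-\lambda\varepsilon+\tfrac12 C(H)\lambda^2\delta^2+\tfrac{n^d}{a_n^2}\log\tfrac2\delta$, and letting $n\to\infty$, then $\delta\to0$, then $\lambda\to\infty$ yields $-\infty$. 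These bounds do not depend on $s$, so the same conclusion holds with $s$ replaced by any stopping time bounded by $T$, via the strong Markov property and the invariance of $\nu_\rho$ (so that $\eta(\tau)\sim\nu_\rho$), as is needed in \eqref{exp-tight-2}.

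There is no real obstacle here. The only delicate point is keeping the static exponential moment of $\<\mu^n_s,H\>$ under control after multiplying by $n^d/a_n^2$: this is precisely where $a_n\gg n^{d/2}$ (equivalently $n^d/a_n^2\to0$) is used, ensuring the $O(\lambda^2)$ contribution survives the limit. Everything else — the reduction of the supremum to an $L^1$ norm, Jensen, stationarity, and the Hoeffding-type sub-Gaussian estimate — is routine.
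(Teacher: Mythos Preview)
Your argument for the lemma as stated is correct and is the standard route the paper defers to \cite{gao2003moderate}: bound the time-supremum by the $L^1$ integral, apply exponential Chebyshev, then Jensen plus stationarity to reduce to a one-time sub-Gaussian moment under $\nu_\rho$. The small slip $\tfrac{n^d}{a_n^2}\log\tfrac{2}{\delta}$ (it should be $\tfrac{n^d}{a_n^2}\log 2$, since $\tfrac{1}{\delta}\int_s^{s+\delta}dr=1$) is harmless in the limit.

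Your last sentence, however, contains a genuine error. For a general stopping time $\tau$ it is \emph{not} true that $\eta(\tau)\sim\nu_\rho$ under $\P^n_\rho$; stationarity yields $\eta(t)\sim\nu_\rho$ only for deterministic $t$ (for a counterexample take $\tau=\inf\{t:\eta_0(t)=1\}$, so that $\eta_0(\tau)=1$ a.s.). Hence the strong Markov step you invoke does not reduce the problem to an expectation under $\nu_\rho$. To get the uniformity over $\tau\in\mc{T}$ needed in \eqref{exp-tight-2}, replace that step by a discretization: for any $\tau\le T$ the interval $[\tau,\tau+\delta]$ is contained in the union of two consecutive intervals of the partition $\{[j\delta,(j+1)\delta]:0\le j\le\lceil T/\delta\rceil\}$, so
\[
\int_\tau^{\tau+\delta}|\<\mu^n_r,H\>|\,dr\le \max_{0\le j\le\lceil T/\delta\rceil}\int_{j\delta}^{(j+2)\delta}|\<\mu^n_r,H\>|\,dr.
\]
A union bound over $j$ introduces a factor $O(T/\delta)$ which vanishes after multiplying by $n^d/a_n^2$ (since $a_n\gg n^{d/2}$), and your deterministic-$s$ estimate, applied on each interval of length $2\delta$, then gives the bound $-\lambda\varepsilon+2C(H)\lambda^2\delta^2$, from which the conclusion follows by letting $\delta\to 0$ and then $\lambda\to\infty$.
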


The proof of the above lemma is the same as \cite[Lemma 2.2]{gao2003moderate}, and thus is omitted. 

It remains to deal with the martingale term. By Markov's inequality,
\[\frac{n^d}{a_n^2} \log \P^n_\rho \Big( \sup_{0 \leq t \leq T} \frac{n^d}{a_n^2}	|\log \mc{M}^n_t (H)| > A\Big) \leq - A + \frac{n^d}{a_n^2} \log \E^n_\rho \Big[ \sup_{0 \leq t \leq T} |\mc{M}^n_t (H)|\Big].\]
By \eqref{mart exp}, for any $\lambda \in \R$, 
\begin{equation}\label{mart moment}
	\log \E^n_\rho \Big[  \mc{M}^n_t (H)^\lambda \Big] \leq C(H) |\lambda| (|\lambda|+1)  t a_n^2 /n^d.
\end{equation}
Then, by Doob's inequality, 
\[\E^n_\rho \Big[ \sup_{0 \leq t \leq T} |\mc{M}^n_t (H)|^2\Big] \leq 4 \E^n_\rho \Big[  \mc{M}^n_T (H)^2\Big] \leq 4 e^{C(H) Ta_n^2/n^d}.\]
Thus,
\[\limsup_{A \rightarrow \infty} \limsup_{n \rightarrow \infty} \frac{n^d}{a_n^2} \log \P^n_\rho \Big( \sup_{0 \leq t \leq T} \frac{n^d}{a_n^2}	|\log \mc{M}^n_t (H)| > A\Big) = - \infty.\]
For the estimate in \eqref{exp-tight-2},  for any $\varepsilon > 0$ and for any $\lambda > 0$,
\begin{align*}
	&\limsup_{n \rightarrow \infty} \sup_{\tau \in \mc{T}} \frac{n^d}{a_n^2} \log \P^n_\rho \Big( \sup_{0 < t \leq \delta} |\log \mc{M}^n_{t+\tau} (H) - \log \mc{M}^n_{\tau} (H)| > \varepsilon a_n^2/n^d\Big) \\
	&\leq - \lambda \varepsilon + \limsup_{n \rightarrow \infty} \sup_{\tau \in \mc{T}}  \frac{n^d}{a_n^2} \log \E^n_\rho \Big[ \sup_{0 < t \leq \delta}  \Big(\mc{M}^n_{t+\tau} (H) / \mc{M}^n_\tau (H)\Big)^\lambda \Big]\\
	&\leq - \lambda \varepsilon + \limsup_{n \rightarrow \infty} \sup_{\tau \in \mc{T}}  \frac{n^d}{a_n^2} \log \E^n_\rho \Big[ \Big(\mc{M}^n_{\delta+\tau} (H) / \mc{M}^n_\tau (H)\Big)^{2\lambda} \Big]\\
	&\leq - \lambda \varepsilon + C(H) \lambda (\lambda+1) \delta. 
\end{align*}
Above, we used Doob's inequality and \eqref{mart moment}. We conclude the proof by  first letting $\delta \rightarrow 0$ and then $\lambda \rightarrow +\infty$. 

\section{Proof of Theorem \ref{thm:mdphl}: lower bound}\label{sec:lower hydro}

To prove the lower bound from hydrodynamic limits, we consider  the following perturbation of the generator $L_n$. For any $H \in \mc{C}^{1,\infty}_c ([0,T] \times  \R^d)$, define $L_n^{H} = n^2 (L_{n,s}^{H} + \alpha n^{-\beta} L_{n,a}^{H})$,  where for local functions $f: \mathcal{X} \rightarrow \R$, 
\begin{align*}
	L_{n,s}^{H} f (\eta) &= \frac{1}{2d}\sum_{x \in \Z^d} \sum_{i=1}^d  \exp\big\{ \tfrac{a_n}{n^d} \big(\eta_x - \eta_{x+e_i}\big) \big(H_s(\tfrac{x+e_i-v_nsm}{n}) - H_s(\tfrac{x-v_nsm}{n})\big)\big\}\\
	&\qquad \times  \big[f(\eta^{x,x+e_i}) - f(\eta)\big],\\
	L_{n,a}^H f (\eta) &= \frac{1}{d}\sum_{x \in \Z^d }  \eta_x(1-\eta_{x+e_i}) \exp\big\{ \tfrac{a_n}{n^d}  \big(H_s(\tfrac{x+e_i-v_nsm}{n}) - H_s(\tfrac{x-v_nsm}{n})\big)\big\} \big[f(\eta^{x,x+e_i}) - f(\eta)\big].
\end{align*}
Notice that we introduced a spatial translation to the function $H$ in the above definition.  For $\phi \in \mc{C}^\infty_c (\R^d)$, define $\nu_{\rho}^{n,\phi}$ as the product measure on $\mc{X}_d$ with marginals 
\[\nu_{\rho}^{n,\phi} (\eta_x = 1) = \rho + \frac{\chi(\rho)a_n}{n^d} \phi(\tfrac{x}{n}), \quad x \in \Z^d. \]
Last, we denote by $\P^{n}_{H,\phi}$ the probability measure on $\mc{D} ([0,T], \mc{X}_d)$ induced by the process with generator $L_n^{H}$ starting from the measure $\nu_{\rho}^{n,\phi}$, and by $\E^{n}_{H,\phi}$ the corresponding expectation.  By \cite[Proposition 7.3, Appendix 1]{klscaling}, we have
\begin{equation}\label{giasanov}
	\frac{d \P^n_{H,\phi}}{d \P^n_\rho} = \mc{M}^n_T (H) \frac{d\nu_\rho^{n,\phi}}{d \nu_{\rho}}.
\end{equation}

The following result concerns hydrodynamic limits for the measure $\mu^n = \{\mu^n_t: 0 \leq t \leq T\}$ under $\P^n_{H,\phi}$, whose proof is presented in Subsection \ref{subsec: pf hydro}.

\begin{theorem}\label{thm:hydrodynamic limits}
As $n \rightarrow \infty$ , the sequence of measures $\{\mu^n_t (du): 0 \leq t \leq T\}_{n \geq 1}$ converges in $\P^{n}_{H,\phi}$-probability to the absolutely continuous measure $\{\mu(t,u)du: 0 \leq t \leq T\}$, where $\mu(t,u)$ is the unique weak solution to 
	\begin{equation}\label{mu_PDE}
	\begin{cases}
		\partial_t \mu = \frac{1}{2d} \Delta \mu - \frac{\chi(\rho) }{d}\Delta H, \quad &\text{in} \quad (0,T] \times \R^d,\\
		\mu(0,\cdot) = \chi(\rho) \phi (\cdot), \quad &\text{in} \quad \R^d.  
	\end{cases}
\end{equation}
\end{theorem}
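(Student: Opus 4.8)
The plan is the classical martingale approach to hydrodynamic limits, applied to the fluctuation field $\mu^n$ instead of the empirical measure, and run under the tilted law $\P^n_{H,\phi}$. Fix a test function $G \in \mc{C}^{1,\infty}_c([0,T]\times\R^d)$ and consider the $\P^n_{H,\phi}$-Dynkin martingale
\[
M^n_t(G) = \<\mu^n_t, G_t\> - \<\mu^n_0, G_0\> - \int_0^t (\partial_s + L^H_n)\<\mu^n_s, G_s\>\, ds .
\]
Expanding $L^H_n\<\mu^n_s,G_s\>$ exactly as in \eqref{cal 4}--\eqref{cal 5} — the additional tilting factors $\exp\{\tfrac{a_n}{n^d}(\cdots)\}$ being $1 + O(a_n/n^{d+1})$, hence harmless since $a_n \ll n^d$ — then using summation by parts, the splitting \eqref{eqn 1}, and the moving frame (so that the term $-\tfrac{v_n}{n}\sum_i\<\mu^n_s,\partial_{u_i}G_s\>$ coming from $\partial_s$ hitting $G_s(\tfrac{x - v_n s m}{n})$ cancels the leading order-$n^{1-\beta}$ contribution of $L^H_{n,a}$), I expect
\[
(\partial_s + L^H_n)\<\mu^n_s, G_s\>
= \<\mu^n_s, \tfrac{1}{2d}\Delta G_s\>
+ \frac{1}{2dn^d}\sum_{x \in \Z^d}\sum_{i=1}^d (\eta_x - \eta_{x+e_i})^2\, \partial_{u_i}H_s(\tfrac{x-v_nsm}{n})\,\partial_{u_i}G_s(\tfrac{x-v_nsm}{n})
- Q^n_s(G) + \varepsilon^n_s(G),
\]
where $Q^n_s$ is as in Section \ref{sec:upper hydro} and $\varepsilon^n_s(G)$ collects (i) deterministic $o(1)$ Taylor remainders and (ii) terms of the form $\tfrac{1}{r_n}\sum_x(\tau_x f(\eta(s)) - \tilde f(\rho))\,\psi(s,\tfrac{x}{n})$ with $r_n \gg a_n$ (the $\rho$-part and the second-order pieces of \eqref{eqn 1} and of the Taylor expansions of $G$ and of the tilting factors), whose time integrals vanish in $\P^n_\rho$-probability by Lemma \ref{lem:replacement-1}; the same lemma also replaces $\tfrac1{n^d}\sum_x(\eta_x-\eta_{x+e_i})^2\psi(x/n)$ by $2\chi(\rho)\int\psi$.

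The two genuine inputs, Lemma \ref{lem:replacement-1} and Proposition \ref{pro: Q^n}, are stated under $\P^n_\rho$; I would transfer them to $\P^n_{H,\phi}$ via \eqref{giasanov}: for any event $A$ with $\P^n_\rho(A) \leq e^{-Ma_n^2/n^d}$, Hölder's inequality gives $\P^n_{H,\phi}(A) \leq \|\mc{M}^n_T(H)\,d\nu_\rho^{n,\phi}/d\nu_\rho\|_{L^p(\P^n_\rho)}\,\P^n_\rho(A)^{1/q}$, and the $L^p$-norm here is at most $\exp\{C(H,\phi,p)a_n^2/n^d\}$ — the martingale factor by \eqref{mart moment}, the density factor because each site contributes $1 + O(a_n^2/n^{2d})$ to $E_{\nu_\rho}[(d\nu_\rho^{n,\phi}/d\nu_\rho)^p]$ — so taking $M$ large enough forces $\P^n_{H,\phi}(A) \to 0$. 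Next, the martingale is negligible in $L^2(\P^n_{H,\phi})$: the jump rates of $L^H_n$ are at most $C(H)n^2$ and each bond moves $\<\mu^n_s, G_s\>$ by $O(1/(a_nn))$, so $\<M^n(G)\>_T \leq C(G,H)\,n^d/a_n^2$ pathwise, whence $\E^n_{H,\phi}[\sup_{t\le T} M^n_t(G)^2] \to 0$ by Doob since $a_n \gg n^{d/2}$. Finally, a direct computation under $\nu_\rho^{n,\phi}$ yields $E_{\nu_\rho^{n,\phi}}[\<\mu^n_0,G\>] \to \chi(\rho)\int \phi G$ and $\vari(\<\mu^n_0,G\>) = O(n^d/a_n^2) \to 0$, so $\mu^n_0 \to \chi(\rho)\phi\,du$ in $\P^n_{H,\phi}$-probability.

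Combining these, every limit point $Q$ of the laws of $\{\mu^n\}$ under $\P^n_{H,\phi}$ is supported on trajectories $\mu$ with $\mu_0 = \chi(\rho)\phi\,du$ and, for all $G \in \mc{C}^{1,\infty}_c([0,T]\times\R^d)$,
\[
\<\mu_t, G_t\> - \<\mu_0, G_0\> - \int_0^t\<\mu_s,(\partial_s + \tfrac{1}{2d}\Delta)G_s\>\,ds + \frac{\chi(\rho)}{d}\int_0^t\<\Delta H_s, G_s\>\,ds = 0 ,
\]
which is the weak form of \eqref{mu_PDE} (using $\int_{\R^d}\nabla H_s\cdot\nabla G_s = -\<\Delta H_s, G_s\>$ after the replacement of $(\eta_x-\eta_{x+e_i})^2$ by $2\chi(\rho)$). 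To guarantee that limit points exist, one needs tightness of $\{\mu^n\}$ in $\mc{D}([0,T],\mc{S}')$ under $\P^n_{H,\phi}$; this follows by transferring \eqref{exp-tight-1}--\eqref{exp-tight-2} through the same Radon--Nikodym bound (alternatively, directly from the decomposition above via the Aldous--Rebolledo criterion). Uniqueness of the weak solution of \eqref{mu_PDE} is elementary: testing against $G_s = P_{t-s}\psi$, with $P_t = e^{t\Delta/(2d)}$ the heat semigroup and $\psi \in \mc{S}$, forces $\<\mu_t,\psi\> = \chi(\rho)\<P_t\phi,\psi\> - \tfrac{\chi(\rho)}{d}\int_0^t\<P_{t-s}\Delta H_s,\psi\>\,ds$, i.e. $\mu_t$ is the smooth (in particular absolutely continuous) Duhamel solution; hence $Q$ is a Dirac mass and $\mu^n \to \mu$ in $\P^n_{H,\phi}$-probability.

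The main obstacle is the bookkeeping in the first two steps: tracking every term produced by the tilted generator $L^H_n$ — in particular the genuinely asymmetric contributions absent from \cite{gao2003moderate}, which must either cancel via the moving frame, or be absorbed into \eqref{eqn 1} and Proposition \ref{pro: Q^n}, or fall under the scope of Lemma \ref{lem:replacement-1} — and verifying that these super-exponential estimates survive the change of measure from $\P^n_\rho$ to $\P^n_{H,\phi}$.
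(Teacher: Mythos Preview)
Your proposal is correct and follows essentially the same route as the paper's proof in Subsection \ref{subsec: pf hydro}: Dynkin martingale under $\P^n_{H,\phi}$, expansion of $L^H_n\<\mu^n_s,G_s\>$ into \eqref{int 1}, transfer of Lemma \ref{lem:replacement-1} and Proposition \ref{pro: Q^n} from $\P^n_\rho$ to $\P^n_{H,\phi}$ via the Radon--Nikodym bound, the $L^2$-vanishing \eqref{mart vanish} of the martingale, tightness from \eqref{exp-tight-1}--\eqref{exp-tight-2}, and identification of the limit. The only noteworthy variations are that you use H\"older directly for the change of measure where the paper packages this as an entropy argument (Lemma \ref{lem 1}), and you extract absolute continuity of $\mu_t$ from the Duhamel formula rather than via the separate LDP argument of Lemma \ref{lem: hl}(2) --- both shortcuts are legitimate.
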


Here, we say $\mu$ is a weak solution to \eqref{mu_PDE} if for any $G \in \mc{C}_c^{1,2} ([0,T] \times \R^d)$ and for any $0 < t \leq T$, 
\begin{multline*}
	\int_{\R^d} \mu (t,u) G_t (u) du = \chi(\rho)	\int_{\R^d} \phi (u) G_0 (u) du \\
	+ \int_{0}^t \int_{\R^d} \mu (s,u) \Big(\partial_{s} + \frac{1}{2d} \Delta \Big) G_s (u)\,du\,ds 
	+ \frac{\chi (\rho)}{d} \sum_{i=1}^d  \int_{0}^t \int_{\R^d} \partial_{u_i} G_s (u) \partial_{u_i} H_s (u)\,du\,ds.
\end{multline*}
Actually, $\mu$ has the following explicit expression: for any $t \geq 0$ and for any $u \in \R^d$,
\begin{equation}\label{mu formula}
	\mu(t,u) = \chi(\rho) \int_{\R^d} p_{t/d} (u-v) \phi (v) dv - \frac{\chi (\rho)}{d} \sum_{i=1}^d \int_0^t \int_{\R^d}  \partial_{v_i} p_{(t-s)/d} (u-v) \partial_{v_i} H (s,v) \,dv \,ds,
\end{equation}
where $p_t (u)$ is the transition probability of the standard $d$-dimensional Brownian motion
\[p_t (u)  = (2\pi t)^{-d/2} \exp \Big\{- \sum_{i=1}^d \frac{u_i^2}{2t} \Big\}.\]

Before proving the lower bound, we  also need to introduce some  properties of the rate function $\mc{Q}$.  For $H,G \in \mc{C}^{1,\infty}_c ([0,T] \times \R^d)$, define the scalar product
\[[H,G] = \frac{1}{d} \sum_{i=1}^d \int_0^T \int_{\R^d} \partial_{u_i} H (t,u) \partial_{u_i} G(t,u) \,du \,dt.\]
Define further the equivalence relation $G \sim H$ if and only if $[H-G,H-G]=0$. Last, let $\mc{H}^1$ be the Hilbert space defined as the completion of $\mc{C}^{1,\infty}_c ([0,T] \times \R^d) / \sim$ with respect to the scalar product $[\cdot,\cdot]$. 

\begin{lemma}\label{lem:property rate function}
	If $\mc{Q} (\mu) < + \infty$, then there exist $\phi \in L^2 (\R^d)$ and $H \in \mc{H}^1$ such that
	\[\mc{Q}_0 (\mu_0) = \frac{\chi(\rho)}{2} \|\phi\|_{L^2 (\R^d)}^2, \quad \mc{Q}_{dyn} (\mu) = \frac{\chi(\rho)}{2} [H,H].\]
	Moreover, $\mu$ is the unique weak solution of \eqref{mu_PDE}.
\end{lemma}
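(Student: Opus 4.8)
The plan is to analyze the two variational problems defining $\mc{Q}_0(\mu_0)$ and $\mc{Q}_{dyn}(\mu)$ separately, each by the standard Riesz-representation argument for quadratic functionals. For $\mc{Q}_0$: since $\mc{Q}(\mu)<+\infty$ implies $\mc{Q}_0(\mu_0)<+\infty$, the linear functional $\phi \mapsto \langle \mu_0,\phi\rangle$ is bounded with respect to the $L^2(\R^d)$ norm — indeed, if it were unbounded along some sequence $\phi_k$ with $\|\phi_k\|_{L^2}=1$, then taking $t\phi_k$ and $t\to\infty$ would force $\mc{Q}_0(\mu_0)=+\infty$. Hence by density of $\mc{C}^\infty_c(\R^d)$ in $L^2(\R^d)$ and the Riesz representation theorem, there is $\phi\in L^2(\R^d)$ with $\langle\mu_0,\psi\rangle=\chi(\rho)\langle\phi,\psi\rangle_{L^2}$ for all test $\psi$; optimizing the quadratic $t\langle\mu_0,\psi\rangle-\tfrac{\chi(\rho)}{2}t^2\|\psi\|_{L^2}^2$ over $t$ and then over $\psi$ gives exactly $\mc{Q}_0(\mu_0)=\tfrac{\chi(\rho)}{2}\|\phi\|_{L^2}^2$.

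For $\mc{Q}_{dyn}$: observe that $H\mapsto l(\mu,H)$ is linear and that the penalization is $\tfrac{\chi(\rho)}{2}[H,H]$ (this matches the term $\tfrac{\chi(\rho)}{2d}\sum_i\|\partial_{u_i}H\|^2_{L^2([0,T]\times\R^d)}$ by the definition of $[\cdot,\cdot]$). The same boundedness-from-finiteness argument shows $|l(\mu,H)|\le C[H,H]^{1/2}$ for all $H\in\mc{C}^{1,\infty}_c([0,T]\times\R^d)$; since $l(\mu,\cdot)$ is well-defined on the quotient by $\sim$ (if $[H-G,H-G]=0$ then $l(\mu,H)=l(\mu,G)$, again by the bound), it extends to a bounded linear functional on $\mc{H}^1$, and Riesz gives $H\in\mc{H}^1$ with $l(\mu,G)=\chi(\rho)[H,G]$ for all $G$. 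Optimizing the scalar quadratic as before yields $\mc{Q}_{dyn}(\mu)=\tfrac{\chi(\rho)}{2}[H,H]$.

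Finally, to identify $\mu$ as the weak solution of \eqref{mu_PDE}: the identity $l(\mu,G)=\chi(\rho)[H,G]$ for all $G\in\mc{C}^{1,\infty}_c([0,T]\times\R^d)$, written out, says precisely that
\[
\langle\mu_T,G_T\rangle-\langle\mu_0,G_0\rangle-\int_0^T\langle\mu_s,(\partial_s+\tfrac{1}{2d}\Delta)G_s\rangle\,ds=\frac{\chi(\rho)}{d}\sum_{i=1}^d\int_0^T\int_{\R^d}\partial_{u_i}H\,\partial_{u_i}G\,du\,ds,
\]
and combined with $\langle\mu_0,G_0\rangle=\chi(\rho)\langle\phi,G_0\rangle_{L^2}$ this is exactly the weak formulation of \eqref{mu_PDE} (after a density argument extending the class of test functions $G$ from $\mc{C}^{1,\infty}_c$ to $\mc{C}^{1,2}_c$, and localizing to the time interval $[0,t]$ by choosing $G$ supported near $[0,t]$, or by a standard approximation). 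Uniqueness of the weak solution — which also shows the representation \eqref{mu formula} holds — follows from the linearity of the equation and a Gronwall / energy estimate on the difference of two solutions, tested against the solution itself via the heat semigroup. The main obstacle is the verification that the linear functionals are genuinely bounded in the respective norms (the passage from "finite for each test function after optimization" to "uniformly bounded"), and the care needed in the density arguments to enlarge the test-function class for the weak formulation; the Riesz representation and the quadratic optimization are then routine.
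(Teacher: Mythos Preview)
Your proposal is correct and follows essentially the same approach as the paper: the paper also invokes the Riesz representation theorem to produce $\phi\in L^2(\R^d)$ and $H\in\mc{H}^1$ satisfying $\langle\mu_0,\psi\rangle=\chi(\rho)\langle\phi,\psi\rangle$ and $l(\mu,G)=\chi(\rho)[H,G]$, and then refers to \cite[Lemma~5.1]{gao2003moderate} for the remaining details. Your write-up is in fact more explicit than the paper's (which omits the proof), and the technical points you flag---boundedness of the linear functionals, well-definedness on the quotient, and localizing the weak formulation to $[0,t]$---are exactly the right ones to note.
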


The proof of the above result uses Riesz representation theorem, and is standard, see \cite[Lemma 5.1]{gao2003moderate} for example.  Indeed, if $\mc{Q} (\mu) < + \infty$, then there exists $\phi  \in L^2 (\R^d)$ and $H \in \mc{H}^1$ such that
\[\<\mu_0, \psi\> = \chi(\rho) \<\phi,\psi\>, \; \forall \psi \in L^2 (\R^d); \quad l (\mu,G) = \chi (\rho) [H,G], \; \forall G \in \mc{H}^1,\]
 from which the above lemma follows easily. For this reason, we omit the proof.

Now, we are ready to prove the lower bound.

\begin{proof}[Proof of the lower bound \eqref{lower hd}]
Let $O$ be an open set of $\mc{D} ([0,T],\mc{S}^\prime)$. We only need to show that for any $\mu \in O$,
\[\liminf_{n \rightarrow \infty} \frac{n^d}{a_n^2}\log \P^n_\rho (\{\mu^n_t: 0 \leq t \leq T\} \in O) \geq - \mc{Q} (\mu). \]
Without loss of generality, we can assume $\mc{Q} (\mu) < + \infty$; otherwise, the last inequality is trivial.  Let $\phi = \phi (\mu)$ and $H = H(\mu)$ be identified as in Lemma \ref{lem:property rate function}.  For $\delta > 0$, choose $\phi^\delta \in \mc{C}_c^\infty (\R^d), H^\delta \in \mc{C}^{1,\infty}_c([0,T] \times \R^d)$,  such that
\[\lim_{\delta \rightarrow 0} \|\phi^\delta - \phi\|_{L^2(\R^d)} = \lim_{\delta \rightarrow 0} \|H^\delta-H\|_{\mc{H}^1}= 0.\] 
Above, $\|H\|_{\mc{H}^1} := \sqrt{[H,H]}$. Then, by \eqref{mu formula} and Lemma \ref{lem:property rate function}, $\mu^\delta = \mu^\delta (\phi^\delta,H^\delta)$ converges, as $\delta \rightarrow 0$,  to $\mu$ in $\mc{D} ([0,T],\mc{S}^\prime)$, and $\lim_{\delta \rightarrow 0} \mc{Q} (\mu^\delta) = \mc{Q} (\mu)$. Thus, for any $\delta > 0$, there exists $\mu^\delta \in O$ with $\phi^\delta \in \mc{C}_c^\infty (\R^d), H^\delta \in \mc{C}^{1,\infty}_c([0,T] \times \R^d)$ such that
\[- \mc{Q} (\mu^\delta) \geq - \mc{Q} (\mu) - o_\mu (\delta),\]
where $o_\mu (\delta)$ is some constant depending on $\mu$ and  converges to zero as $\delta \rightarrow 0$. Therefore,  we only need to show that, for $\delta$ small enough,
\[\liminf_{n \rightarrow \infty} \frac{n}{a_n^2}\log \P^n_\rho (\{\mu^n_t: 0 \leq t \leq T\} \in O) \geq - \mc{Q} (\mu^\delta). \]

Since $\mu^\delta \in O$ for $\delta$ small enough, to make notations short, let us  denote $\mu^\delta$ by $\mu$, and then $\phi = \phi (\mu) \in \mc{C}^\infty_c (\R^d)$ and $H = H(\mu) \in \mc{C}^{1,\infty}_c([0,T] \times \R^d)$.  By \eqref{giasanov}, we first write
\[\P^n_\rho (\mu^n \in O) = \E^n_{H,\phi} \Big[ \frac{d \P^n_\rho}{d \P^n_{H,\phi}}  \mathbf{1} \{ \mu^n \in O\} \Big]  = \E^n_{H,\phi} \Big[\mc{M}^n_T (H)^{-1}  \frac{d \nu^n_\rho}{d \nu^{n,\phi}_{\rho}} \Big| \mu^n \in O\Big] \P^n_{H,\phi} (\mu^n \in O).  \]
Then, by Jensen's inequality,
\begin{align*}
	\frac{n^d}{a_n^2} \log \P^n_\rho (\mu^n \in O)  &\geq \E^n_{H,\phi} \Big[	\frac{n^d}{a_n^2} \log \mc{M}^n_T (H)^{-1}  \Big| \mu^n \in O \Big] \\&+  \E^n_{H,\phi} \Big[	\frac{n^d}{a_n^2} \log \frac{d \nu^n_\rho}{d \nu^{n,\phi}_{\rho}}   \Big| \mu^n  \in O \Big] + \frac{n^d}{a_n^2} \log \P^n_{H,\phi} (\mu^n \in O) =: \sum_{j=1}^3 A^n_j. 
\end{align*}

The term $A^n_3$ is simpler: by Theorem \ref{thm:hydrodynamic limits}, under $\P^n_{H,\phi}$, $\mu^n \rightarrow \mu \in O$ in probability as $n \rightarrow \infty$. Thus,
\[\lim_{n \rightarrow \infty} A^n_3 = 0.\]

For $A^n_1$, by \eqref{mart exp} and \eqref{rnt}, 
\begin{align*}
	\frac{n^d}{a_n^2} \log \mc{M}^n_T (H)^{-1}  = - \Big(\ell (\mu^n,H) - \frac{\chi(\rho)}{2} [H,H] + \mc{R}^n_T (H)\Big).
\end{align*}
By Lemma \ref{lem 1} below,  $\mc{R}^n_T (H)$ converges  in $\P^n_{H,\phi}$-probability to zero as $n \rightarrow \infty$.  Together with Theorem \ref{thm:hydrodynamic limits} and Lemma \ref{lem:property rate function}, the last line converges, as $n \rightarrow \infty$, in $\P^n_{H,\phi}$-probability   to 
\[-\Big(\ell (\mu,H) - \frac{\chi(\rho)}{2} [H,H] \Big) = - \frac{\chi(\rho)}{2} [H,H] =- \mc{Q}_{dyn} (\mu).\]
Last, by dominated convergence theorem,
\begin{align*}
\lim_{n \rightarrow \infty} A^n_1 = \lim_{n \rightarrow \infty} \frac{1}{\P^n_{H,\phi} (\mu^n \in O) }\E^n_{H,\phi} \Big[	\frac{n^d}{a_n^2} \log \mc{M}^n_T (H)^{-1} \mathbf{1} \{\mu^n \in O\} \Big]   = - \mc{Q}_{dyn} (\mu). 
\end{align*}

It remains to deal with $A^n_2$. Since $\nu^{n,\phi}_\rho$ and $\nu^n_\rho$ are both product measures, 
\begin{align*}
\frac{n^d}{a_n^2} \log \frac{d \nu^{n,\phi}_\rho}{d \nu^n_\rho} = \frac{n^d}{a_n^2} \sum_{x \in \Z^d} \Big\{  \eta_x \log \Big(1+ \frac{(1-\rho)a_n}{n^d} \phi (\tfrac{x}{n})\Big) + (1-\eta_x)  \log \Big(1-  \frac{\rho a_n}{n^d} \phi (\tfrac{x}{n})\Big)  \Big\}.
\end{align*}
Since $|\log (1+x) - x + x^2/2| \leq C x^3$ for $x$ small, the last line equals
\[\frac{1}{a_n} \sum_{x \in \Z^d} (\eta_x - \rho) \phi (\tfrac{x}{n}) - \frac{1}{2n^d} \sum_{x \in \Z^d} \big(\eta_x (1-\rho)^2 + (1-\eta_x) \rho^2\big)\phi(\tfrac{x}{n})^2 + O_\phi (a_n/n^d).\]
Notice that
\[\eta_x (1-\rho)^2 + (1-\eta_x) \rho^2 = (1-2\rho) (\eta_x - \rho) + \chi(\rho). \]
Thus,
\begin{equation}\label{initial deriv}
\begin{aligned}
	\frac{n^d}{a_n^2} \log \frac{d \nu^{n,\phi}_\rho}{d \nu^n_\rho} =&  \frac{1}{a_n} \sum_{x \in \Z^d} (\eta_x - \rho) \phi (\tfrac{x}{n}) \\
	&\qquad + \frac{(2\rho-1)}{2n^d} \sum_{x \in \Z^d}(\eta_x - \rho) \phi(\tfrac{x}{n})^2  - \frac{\chi(\rho)}{2} \|\phi\|_{L^2 (\R^d)}^2 + o_\phi (n),
\end{aligned}
\end{equation}
where $o_\phi (n)$ is a constant depending on $\phi$ and vanishes as $n \rightarrow \infty$. 
Under $\nu_\rho^{n,\phi}$, the last expression converges in probability   to  $ \chi(\rho )\|\phi\|_{L^2 (\R^d)}^2/2$ as $n \rightarrow \infty$.
As we dealt with $A^n_1$, by dominated convergence theorem,
\[\lim_{n \rightarrow \infty} A^n_2 = -   \frac{\chi(\rho)}{2} \|\phi\|_{L^2 (\R^d)}^2 = - \mc{Q}_0 (\mu_0).\]
This concludes the proof.
\end{proof}

\subsection{Proof of Theorem \ref{thm:hydrodynamic limits}}\label{subsec: pf hydro}  In this subsection, we prove Theorem \ref{thm:hydrodynamic limits}. By Dynkin's martingale formula, under $\P^n_{H,\phi}$, for any $G \in \mc{C}^{1,2}_c ([0,T] \times \R^d)$, 
\begin{align}
	M^n_t (G) &= \<\mu^n_t,G_t\> - \<\mu^n_0,G_0\> - \int_0^t (\partial_{s} + L^H_n)\<\mu^n_s,G_s\> ds,\label{mart 1}\\
	M^n_t (G)^2&- \int_0^t \big\{ L^H_n \<\mu^n_s,G_s\>^2 - 2 \<\mu^n_s,G_s\> L^H_n \<\mu^n_s,G_s\> \big\}ds\label{mart 2}
\end{align}
are both martingales. 

We first calculate the time integral in \eqref{mart 1}. For the part associated to the symmetric generator $L^{H}_{n,s}$,
\begin{align*}
	n^2 L^H_{n,s} \<\mu^n_s,G_s\> = \frac{n^2}{2da_n} \sum_{x \in \Z^d} \sum_{i=1}^d  &\exp \Big\{ \frac{a_n}{n^d} (\eta_x - \eta_{x+e_i}) (H_s(\tfrac{x+e_i-v_nsm}{n}) - H_s(\tfrac{x-v_nsm}{n}) ) \Big\}\\
	&\times  (\eta_{x} - \eta_{x+e_i}) \big(G_s (\tfrac{x+e_i-v_nsm}{n}) - G_s(\tfrac{x-v_nsm}{n}) \big).
\end{align*}
Using the basic inequality $|e^x - 1 - x| \leq (x^2/2) e^{|x|}$, and the summation by parts formula, the term on the right-hand side of the last expression equals
\begin{multline}\label{int_cal_1}
	\frac{1}{2da_n} \sum_{x \in \Z^d} (\eta_x - \rho) \Delta_n G_s (\tfrac{x-v_nsm}{n}) \\
	+ \frac{1}{2dn^d} \sum_{x \in \Z^d} \sum_{i=1}^d (\eta_x - \eta_{x+e_i})^2 \partial_{u_i} H_s (\tfrac{x-v_nsm}{n}) \partial_{u_i} G_s (\tfrac{x-v_nsm}{n}) + o_{H,G} (n),
\end{multline}
where $o_{H,G} (n)$ is some constant depending on $H,G$ and vanishes as $n \rightarrow \infty$. Similarly, for the asymmetric part,
\begin{multline*}
	\alpha n^{2-\beta} L^H_{n,a} \<\mu^n_s,G_s\> \\
	=  \frac{\alpha n^{1-\beta}}{da_n} \sum_{x \in \Z^d}  \sum_{i=1}^d  \eta_{x} (1-\eta_{x+e_i}) \exp \Big\{ \frac{a_n}{n^d}  (H_s(\tfrac{x+e_i-v_nsm}{n}) - H_s(\tfrac{x-v_nsm}{n}) ) \Big\} \nabla_{n,i} G_s (\tfrac{x-v_nsm}{n}).
\end{multline*}
Using the inequality $|e^x - 1| \leq |x| e^{|x|}$, the last line equals
\begin{equation}\label{int_cal_2}
\frac{\alpha n^{1-\beta}}{da_n} \sum_{x \in \Z^d} \sum_{i=1}^d  \big(\eta_{x} (1-\eta_{x+e_i}) - \chi(\rho) \big)\nabla_{n,i} G_s (\tfrac{x-v_ns}{n})  + o_{H,G} (n).
\end{equation}
By Lemma \ref{lem:replacement-1}, we can replace $\nabla_{n,i} G_s$ by $\partial_{u_i} G_s$ in the last expression, producing an error term $\varepsilon^n_{4,G} (s)$ that is super-exponentially small  under the measure $\P^n_\rho$.  For the time derivative, 
\[\partial_{s} \<\mu^n_s,G_s\> = \<\mu^n_s,\partial_{s} G_s\> - \frac{\alpha n^{1-\beta}}{da_n} \sum_{x \in \Z^d} \sum_{i=1}^d (1-2\rho) (\eta_x - \rho) \partial_{u_i} G_s (\tfrac{x-v_nsm}{n}).\]
By \eqref{eqn 1}, summation by parts formula and Lemma \ref{lem:replacement-1}, 
\begin{multline*}
(\partial_{s} + \alpha n^{2-\beta} L^H_{n,a} )\<\mu^n_s,G_s\> \\
= \<\mu^n_s,\partial_{s} G_s\> - \frac{\alpha n^{1-\beta}}{da_n} \sum_{x \in \Z^d} \sum_{i=1}^d   (\eta_{x} - \rho) (\eta_{x+e_i} - \rho) \partial_{u_i} G_s (\tfrac{x-v_nsm}{n}) + \varepsilon^n_{5,G} (s)+o_{H,G} (n),
\end{multline*}
where $\varepsilon^n_{5,G} (s)$ satisfies the estimates in \eqref{error estimate}. To sum up,  the time integral in \eqref{mart 1} equals 
\begin{equation}\label{int 1}
	\begin{split}
		&	\int_0^t  \<\mu^n_s, \Big(\partial_{s} + \frac{1}{2d}\Delta_n \Big)G_s\> ds\\
		&+ \int_0^t  \frac{1}{2dn^d} \sum_{x \in \Z^d} \sum_{i=1}^d  \big(\eta_x(s) - \eta_{x+e_i}(s)\big)^2 \partial_{u_i} H_s(\tfrac{x-v_nsm}{n}) \partial_{u_i} G_s(\tfrac{x-v_nsm}{n}) ds 
		\\
		&- \int_0^t \frac{\alpha n^{1-\beta}}{da_n} \sum_{x \in \Z^d} \sum_{i=1}^d (\eta_x (s)- \rho)(\eta_{x+e_i} (s) - \rho) \partial_{u_i} G_s (\tfrac{x-v_nsm}{n}) ds \\
		&+o_{H,G} (n) + \int_0^t \sum_{j=4,5} \varepsilon^n_{j,G} (s) ds.
	\end{split}
\end{equation}

A simple calculation also shows that the time integral in \eqref{mart 2} equals
\begin{multline*}
	\int_0^t	 \sum_{x \in \Z^d} \sum_{i=1}^d  \Big(\frac{1}{2d a_n^2} (\eta_x (s) - \eta_{x+e_i} (s))^2 + \frac{\alpha}{d n^\beta a_n^2} \eta_x (s) (1-\eta_{x+e_i}(s)) \Big)  \\
	\times \exp\big\{ \tfrac{a_n}{n^d} \big(\eta_x (s)- \eta_{x+e_i} (s)\big) \big(H_s(\tfrac{x+e_i-v_nsm}{n}) - H_s(\tfrac{x-v_nsm}{n})\big)\big\}
	(\nabla_{n,i} G_s (\tfrac{x-v_nsm}{n}))^2 ds.
\end{multline*}
By Doob's inequality and Cauchy-Schwarz inequality,  since $a_n \gg n^{d/2}$,
\begin{equation}\label{mart vanish}
	\lim_{n \rightarrow \infty} \E^n_{H,\phi} \Big[ \sup_{0 \leq t \leq T} M^n_t (G)^2\Big] \leq \lim_{n \rightarrow \infty} \frac{C(H,G) T^2 n^d}{a_n^2} = 0.
\end{equation}

To deal with \eqref{int 1}, we need the following lemma.

\begin{lemma}\label{lem 1}
Let $\mc{A} \in \mc{D} ([0,T],\mc{X}_d)$ satisfy
\[\lim_{n \rightarrow \infty}   \frac{n^d}{a_n^2} \log \P^n_\rho (\mc{A}) = - \infty.\]
Then,
\[\lim_{n \rightarrow \infty} \P^n_{H,\phi} (\mc{A}) = 0. \]
\end{lemma}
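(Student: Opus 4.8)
The plan is to exploit the Radon--Nikodym identity \eqref{giasanov} together with a H\"older-type inequality, using the fact that $\P^n_\rho(\mc{A})$ decays faster than any exponential in $a_n^2/n^d$ whereas the relevant exponential moments of the density $\mc{M}^n_T(H)\,d\nu^{n,\phi}_\rho/d\nu_\rho$ grow at most exponentially in $a_n^2/n^d$. If $\P^n_\rho(\mc{A})=0$ for some $n$ then $\P^n_{H,\phi}(\mc{A})=0$ by absolute continuity, so one may assume $\P^n_\rho(\mc{A})>0$ throughout.

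Concretely, I would start from
\[\P^n_{H,\phi}(\mc{A}) = \E^n_\rho\Big[\mathbf{1}_{\mc{A}}\,\mc{M}^n_T(H)\,\frac{d\nu^{n,\phi}_\rho}{d\nu_\rho}\Big]\]
and apply H\"older's inequality with exponents $(3,3,3)$, obtaining
\[\P^n_{H,\phi}(\mc{A}) \leq \P^n_\rho(\mc{A})^{1/3}\Big(\E^n_\rho[\mc{M}^n_T(H)^3]\Big)^{1/3}\Big(\E^n_\rho\Big[\big(d\nu^{n,\phi}_\rho/d\nu_\rho\big)^3\Big]\Big)^{1/3}.\]
The middle factor is bounded by $\exp\{C(H,T)a_n^2/n^d\}$ directly from \eqref{mart moment} with $\lambda=3$. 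For the last factor, since both measures are products,
\[\E^n_\rho\Big[\big(d\nu^{n,\phi}_\rho/d\nu_\rho\big)^3\Big] = \prod_{x\in\Z^d}\Big\{\rho\big(1+\tfrac{(1-\rho)a_n}{n^d}\phi(\tfrac{x}{n})\big)^3 + (1-\rho)\big(1-\tfrac{\rho a_n}{n^d}\phi(\tfrac{x}{n})\big)^3\Big\},\]
and a Taylor expansion exactly as in \eqref{mu0 converge}--\eqref{initial deriv} (the first-order terms cancel because $\rho(1-\rho)=(1-\rho)\rho$, and the second-order coefficient equals $3\chi(\rho)$) shows each factor equals $1+3\chi(\rho)a_n^2\phi(x/n)^2/n^{2d}+O_\phi(a_n^3/n^{3d})$. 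Since $a_n\ll n^d$ and $\phi$ is compactly supported, summing the logarithms gives $\E^n_\rho[(d\nu^{n,\phi}_\rho/d\nu_\rho)^3]\leq \exp\{C(\phi)a_n^2/n^d\}$.

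Putting the three bounds together and taking $\tfrac{n^d}{a_n^2}\log$,
\[\frac{n^d}{a_n^2}\log\P^n_{H,\phi}(\mc{A}) \leq \frac{1}{3}\,\frac{n^d}{a_n^2}\log\P^n_\rho(\mc{A}) + C(H,\phi),\]
and since the first term on the right tends to $-\infty$ by hypothesis, the left-hand side tends to $-\infty$; a fortiori $\P^n_{H,\phi}(\mc{A})\to 0$. I do not expect any genuine obstacle here: the only real computation is the elementary product-measure moment estimate, a minor variant of \eqref{mu0 converge}, while the other two ingredients --- the change-of-measure formula \eqref{giasanov} and the martingale moment bound \eqref{mart moment} --- are already available.
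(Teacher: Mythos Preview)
Your proof is correct. The H\"older step, the martingale moment bound from \eqref{mart moment}, and the product-measure expansion for $E_{\nu_\rho}[(d\nu^{n,\phi}_\rho/d\nu_\rho)^3]$ all go through exactly as you describe.

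Your route, however, differs from the paper's. The paper proceeds in two stages: first it shows the relative-entropy bound $H(\P^n_{H,\phi}\,|\,\P^n_{H,0})=H(\nu^{n,\phi}_\rho\,|\,\nu_\rho)\le K_0 a_n^2/n^d$ and invokes the entropy inequality $\P^n_{H,\phi}(\mc A)\le(\log 2+H)/\log(1+1/\P^n_{H,0}(\mc A))$ to reduce to $\P^n_{H,0}$; then it shows $\tfrac{n^d}{a_n^2}\log\P^n_{H,0}(\mc A)\to-\infty$ via Cauchy--Schwarz on $\E^n_\rho[\mc M^n_T(H)\mathbf 1_{\mc A}]$ together with \eqref{mart moment}. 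Your argument is more direct: a single three-factor H\"older inequality handles both the dynamical perturbation $H$ and the initial perturbation $\phi$ simultaneously, replacing the entropy machinery by an elementary third-moment computation on the Radon--Nikodym density. The payoff of the paper's detour is that the entropy bound \eqref{relative entropy} is reused later (Lemma~\ref{lem: hl}(2), absolute continuity of limit points), whereas your cubic-moment estimate is not; conversely, your approach gives a quantitatively sharper conclusion ($\tfrac{n^d}{a_n^2}\log\P^n_{H,\phi}(\mc A)\to-\infty$, not just $\P^n_{H,\phi}(\mc A)\to 0$) with less input.
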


\begin{proof}
For any two probability measures $\mu, \nu$ on the same space, recall that the relative entropy of $\mu$ with respect to $\nu$ is defined as
\[H(\mu|\nu) = \int \log \Big(\frac{d \mu}{d \nu}\Big) d\mu\]
if $\mu$ is absolutely continuous with respect to $\nu$, and $H(\mu|\nu) = +\infty$ otherwise. 

We claim that
\begin{equation}\label{relative entropy}
H(\P^n_{H,\phi} | \P^n_{H,0}) = H(\nu^{n,\phi}_\rho | \nu_{\rho}) \leq \frac{K_0 a_n^2}{n^d}
\end{equation}
for some constant $K_0 = K_0 (\phi)> 0$.  Indeed, by \eqref{initial deriv},
\[\Big|\log \frac{d \nu_{\rho}^{n,\phi}}{d \nu_{\rho}}  - \frac{a_n}{n^d}\sum_{x \in \Z^d} (\eta_x - \rho ) \phi (x/n) \Big| \leq C(\phi) \frac{a_n^2}{n^d}.\]
Thus,
\[\Big|H(\nu^{n,\phi}_\rho | \nu_{\rho}) - \int  \frac{a_n}{n^d}\sum_{x \in \Z^d} (\eta_x - \rho ) \phi (x/n) d \nu^{n,\phi}_\rho \Big| \leq C (\phi) \frac{a_n^2}{n^d}.\]
We conclude the proof by noting that
\[\Big| \int  \frac{a_n}{n^d}\sum_{x \in \Z^d}(\eta_x - \rho ) \phi (x/n) d \nu^{n,\phi}_\rho \Big|  = \frac{a_n^2 \chi (\rho)}{n^{2d}} \sum_{x \in \Z^d} \phi (x/n)^2 \leq C(\phi) \frac{a_n^2}{n^d}.\]

By entropy inequality (see \cite[Proposition 8.2, Appendix 1]{klscaling}),
\[\P^n_{H,\phi} (\mc{A}) \leq \frac{\log 2 + H(\P^n_{H,\phi} | \P^n_{H,0})}{\log [1+1/\P^n_{H,0} (\mc{A})]}.\]
Thus, we conclude the proof once we can show that
\[\lim_{n \rightarrow \infty}   \frac{n^d}{a_n^2} \log \P^n_{H,0} (\mc{A}) = - \infty.\]
To prove it, by \eqref{giasanov}, Cauchy-Schwarz inequality, and \eqref{mart moment}, 
\begin{align*}
&\lim_{n \rightarrow \infty}   \frac{n^d}{a_n^2} \log \P^n_{H,0} (\mc{A}) = \lim_{n \rightarrow \infty}   \frac{n^d}{a_n^2} \log \E^n_\rho \big[\mc{M}^n_T (H) \mathbf{1}_{\mc{A}}\big] \\
&\leq \limsup_{n \rightarrow \infty}   \frac{n^d}{2a_n^2} \log \E^n_\rho \big[\mc{M}^n_T (H)^2 \big] + \lim_{n \rightarrow \infty}   \frac{n^d}{2a_n^2} \log \P^n_\rho (\mc{A}) = - \infty.
\end{align*}
\end{proof}

Let $Q^n_{H,\phi}$ be the probability measure on $\mc{D} ([0,T],\mathcal{S}^\prime)$ induced by the process $\{\mu^n_t: 0 \leq t \leq T\}$ under the measure $\P^n_{H,\phi}$.  Theorem \ref{thm:hydrodynamic limits} follows directly from the following lemma and the uniqueness of the weak solution to the PDE \eqref{mu_PDE}.

\begin{lemma}\label{lem: hl} The following statements hold.
\begin{enumerate}[$(1)$]
	\item 	The sequence of  probability measures $\{Q^n_{H,\phi}\}_{n \geq 1}$ is tight. 
	\item Let $Q^*$ be any limit point  of the   probability measures $\{Q^n_{H,\phi}\}_{n \geq 1}$ along some subsequence. Then,  $Q^*$ is concentrated on trajectories which are absolutely continuous with respect to the Lebesgue measure,
	\[Q^* \big(\mu_t (du) = \mu_t (u) du \; \text{for some } \mu_t (u), \; \forall t \in [0,T] \big)  = 1.\]
	\item $Q^*$ is concentrated on  trajectories whose densities are the weak solutions to the PDE \eqref{mu_PDE}.
\end{enumerate}
\end{lemma}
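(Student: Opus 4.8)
The plan is to run the classical martingale/entropy scheme for hydrodynamic limits (see \cite{klscaling}) on the perturbed process, using that the hard probabilistic inputs are already in place: the drift of $\<\mu^n_t,G\>$ under $\P^n_{H,\phi}$ has been computed in \eqref{int 1}, its quadratic variation is negligible by \eqref{mart vanish}, and every statement of the form ``super-exponentially small under $\P^n_\rho$'' --- the replacement Lemma \ref{lem:replacement-1} and Proposition \ref{pro: Q^n} --- is converted into ``converges to $0$ in $\P^n_{H,\phi}$-probability'' by Lemma \ref{lem 1}.

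\textbf{Part (1).} By Mitoma's criterion it suffices to prove tightness of $\{\<\mu^n_\cdot,G\>\}_n$ in $\mc{D}([0,T],\R)$ for each test function $G$, and this follows from Aldous's criterion applied to $\<\mu^n_t,G_t\>=\<\mu^n_0,G_0\>+(\text{drift})+M^n_t(G)$. The martingale part is controlled by \eqref{mart vanish}. In the drift \eqref{int 1}, the term $-\int_0^t Q^n_s(G)\,ds$ is negligible by Proposition \ref{pro: Q^n} and Lemma \ref{lem 1}; the $o_{H,G}(n)$ and $\varepsilon^n_{j,G}$ terms likewise; the quadratic term $\tfrac1{2dn^d}\sum_{x,i}(\eta_x-\eta_{x+e_i})^2\partial_{u_i}H_s\,\partial_{u_i}G_s$ is bounded by $C(H,G)$; and the main term is $\int_0^t\<\mu^n_s,(\partial_s+\tfrac1{2d}\Delta_n)G_s\>\,ds$. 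To close the Aldous estimate we use the a priori bound $\sup_n\E^n_{H,\phi}[\sup_{0\le t\le T}\<\mu^n_t,\psi\>^2]<\infty$ for every $\psi\in\mc{S}$, obtained from the same decomposition by a Gronwall argument started from $\E^n_{H,\phi}[\<\mu^n_0,\psi\>^2]=\chi(\rho)^2\big(\int_{\R^d}\phi\psi\big)^2+O(n^d/a_n^2)$; the Schwartz decay of $\psi$ together with the compact support of $\phi$ and $H$ makes the sums over $\Z^d$ converge with constants depending only on finitely many seminorms of $\psi$, and this bound also yields the compact containment of each $\<\mu^n_\cdot,\psi\>$ in $\R$. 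Since the jumps of $\<\mu^n_\cdot,G\>$ are $O(1/a_n)$, all limit points are supported on $\mc{C}([0,T],\R)$.

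\textbf{Parts (2) and (3).} From $\E^n_{H,\phi}[\<\mu^n_t,G\>^2]\le C(\phi,H)\|G\|^2_{L^2(\R^d)}$, uniform in $n$ and $t$ (the same argument, noting that the variance part carries an extra factor $n^d/a_n^2\to0$), any limit point $Q^*$ satisfies $E_{Q^*}[\<\mu_t,G\>^2]\le C\|G\|^2_{L^2}$; hence $G\mapsto\<\mu_t,G\>$ is $Q^*$-a.s.\ a bounded functional on $L^2(\R^d)$, which gives a density $\mu_t(u)$, simultaneously in $t$ by a countable-dense/continuity argument --- this is $(2)$. For $(3)$ one passes to the limit in $\<\mu^n_t,G_t\>-\<\mu^n_0,G_0\>-(\text{drift})=M^n_t(G)$: $M^n$ and the $Q^n_s$, $\varepsilon^n_{j,G}$, $o_{H,G}(n)$ terms vanish in probability; Lemma \ref{lem:replacement-1} with $f(\eta)=(\eta_0-\eta_{e_i})^2$ (so $\tilde f(\rho)=2\chi(\rho)$) together with Lemma \ref{lem 1} turns the quadratic term into $\tfrac{\chi(\rho)}{d}\sum_i\int_0^t\int_{\R^d}\partial_{u_i}H_s\,\partial_{u_i}G_s\,du\,ds$; the discrepancy between $\Delta_n$ and $\Delta$ is negligible because it is $O(n^{-2})$ pointwise on a fixed compact set; and $\<\mu^n_0,G_0\>\to\chi(\rho)\int_{\R^d}\phi G_0$ under $\nu_\rho^{n,\phi}$. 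Together with $(2)$, this shows $Q^*$ is concentrated on densities satisfying the weak formulation of \eqref{mu_PDE}, which is $(3)$, completing the proof of Lemma \ref{lem: hl}; the uniqueness of that weak solution (explicitly \eqref{mu formula}) then gives Theorem \ref{thm:hydrodynamic limits}.

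\textbf{Main obstacle.} The delicate point is the a priori moment bound under the \emph{non-stationary} measure $\P^n_{H,\phi}$: under $\nu_\rho$ it is an immediate product-measure computation, but here one must either run the Gronwall estimate while simultaneously absorbing the quadratic and $Q^n_s$ contributions, or pass through $\P^n_\rho$ via $\E^n_{H,\phi}[F]\le(\E^n_\rho[\mc{M}^n_T(H)^2])^{1/2}(\E^n_\rho[F^2])^{1/2}$ and \eqref{mart moment} --- which only helps when $F$ is already exponentially small under $\P^n_\rho$, exactly the regime supplied by Lemma \ref{lem:replacement-1} and Proposition \ref{pro: Q^n}. Verifying the compact containment of each real-valued coordinate $\<\mu^n_\cdot,\psi\>$ on the unbounded lattice $\Z^d$ is the remaining technical point.
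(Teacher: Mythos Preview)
Your treatment of (3) is essentially the same as the paper's and is fine. The differences are in (1) and (2), and your ``main obstacle'' is in fact a real gap.

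\textbf{The Gronwall argument does not close.} For both your compact containment in (1) and your $L^2$ bound in (2), you rely on an a priori estimate $\sup_n\E^n_{H,\phi}[\sup_{t\le T}\<\mu^n_t,\psi\>^2]<\infty$ obtained ``by a Gronwall argument'' from the decomposition \eqref{mart 1}--\eqref{int 1}. But the dominant drift term is $\int_0^t\<\mu^n_s,\tfrac1{2d}\Delta_n G\>\,ds$, so bounding $\<\mu^n_t,G\>^2$ forces you to control $\<\mu^n_s,\Delta G\>^2$, which involves a \emph{different} test function. Since $\Delta$ is unbounded on $L^2$ (and the Hermite coefficients of $\Delta h_k$ grow like $k$), the inequality cannot be iterated at the level of a single $\psi$ or a single Sobolev index; Gronwall does not close. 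One could try to rescue this by testing against the backward heat flow $G_s=P_{(t-s)/d}G$ so that $(\partial_s+\tfrac1{2d}\Delta)G_s=0$, but that is a different argument from the one you sketched, and you would then have to justify using non--compactly-supported test functions in all the replacement lemmas.

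\textbf{What the paper does instead.} For (1), the paper bypasses moment bounds entirely: the exponential tightness estimates \eqref{exp-tight-1}--\eqref{exp-tight-2} have already been proved under $\P^n_\rho$, and Lemma \ref{lem 1} converts any event that is super-exponentially small under $\P^n_\rho$ into one of vanishing $\P^n_{H,\phi}$-probability. This gives tightness of $\{Q^n_{H,\phi}\}$ in two lines, with no a priori second-moment estimate needed. For (2), rather than an $L^2$/Riesz argument, the paper uses the entropy method of \cite[Chapter 5]{klscaling}: the entropy bound \eqref{relative entropy}, the martingale moment \eqref{mart moment}, and the static LDP for $\mu^n_t$ under the stationary $\nu_\rho$ (rate function $I_0$) combine via the entropy inequality and Laplace--Varadhan to give $E_{Q^*}\big[\int_0^T I_0(\mu_t)\,dt\big]<\infty$, which forces absolute continuity since $I_0(\mu)=+\infty$ off densities. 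This route never needs the uniform second-moment bound you were struggling to produce. Your $L^2$ approach to (2) is conceptually reasonable, but as written it inherits the same unclosed Gronwall step.
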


\begin{proof}
(1)	 For the tightness, we only need to show that for any $G \in \mc{S}$ and for any $\varepsilon > 0$,
	\begin{align*}
		\limsup_{A \rightarrow \infty} \limsup_{n \rightarrow \infty}  \P^n_{H,\phi} \Big( \sup_{0 \leq t \leq T} | \<\mu^n_t,G\>| > A\Big) &= 0,\\
		\limsup_{\delta \rightarrow 0} \limsup_{n \rightarrow \infty} \sup_{\tau \in \mc{T}}   \P^n_{H,\phi} \Big( \sup_{0 < t \leq \delta} |\<\mu^n_{t+\tau} - \mu^n_\tau,G\>)| > \varepsilon\Big) &= 0,
	\end{align*}
	where $\mc{T}$ is the set of all stopping times bounded by $T$. The above two estimates follow directly from Lemma \ref{lem 1} and \eqref{exp-tight-1}, \eqref{exp-tight-2}.
	
(2) For the absolute continuity, we follow the proof of \cite[Lemma 1.6 and Remark 1.8, Chapter 5]{klscaling} and thus only sketch it. For any bounded and continuous function $J: \mc{S}^\prime \rightarrow \R_+$, by entropy inequality (see \cite[Section 8, Appendix 1]{klscaling}), 
\[\E^n_{H,\phi} [J(\mu^n_t)]\leq \frac{n^d}{a_n^2} H(\P^n_{H,\phi} | \P^n_{H,0}) + \frac{n^d}{a_n^2} \log \E^n_{H,0}  \Big[\exp \Big\{   \frac{a_n^2}{n^d} J (\mu^n_t)  \Big\}\Big].\]
By \eqref{relative entropy}, the first term on the right side above is bounded by $K_0$.  By the definition of $\E^n_{H,0}$,  Cauchy-Schwarz inequality, and \eqref{mart moment}, 
\begin{align*}
	& \frac{n^d}{a_n^2}	\log \E^n_{H,0}  \Big[\exp \Big\{   \frac{a_n^2}{n^d} J (\mu^n_t)  \Big\}\Big] =  \frac{n^d}{a_n^2}	\log \E^n_{\rho}  \Big[\mc{M}^n_T (H)  \exp \Big\{  \frac{a_n^2}{n^d} J (\mu^n_t)  \Big\}\Big] \\
	&\leq  \frac{n^d}{2a_n^2} 	\log \E^n_{\rho}  \Big[ \mc{M}^n_T (H)^2   \Big] + \frac{n^d}{2a_n^2} 	\log \E^n_{\rho}  \Big[\exp \Big\{   \frac{2 a_n^2}{n^d} J (\mu^n_t)  \Big\}\Big] \\
	& \leq C(H) T +  \frac{n^d}{2a_n^2} 	\log \E^n_{\rho}  \Big[\exp \Big\{   \frac{2 a_n^2}{n^d} J (\mu^n_t)  \Big\}\Big].
\end{align*}
It remains to deal with the second term on the right side in the last expression. Since $\nu_{\rho}$ is invariant for the WASEP and is a product measure, the measure $\mu^n_t$ satisfies the large deviations principle with decay rate $a_n^2 / n^d$ and with rate function 
\[I_0 (\mu) = \sup_{f \in \mc{S}} \Big\{  \int_{\R^d} f(u) \mu (du) - \int_{\R^d} \log M_\rho (f(u)) du \Big\}, \]
where, for $\theta \in \R$, 
\[M_\rho (\theta) = E_{\nu_{\rho}} [e^{\theta(\eta_0 - \rho)}] = \rho e^{\theta(1-\rho)} + (1-\rho) e^{-\theta \rho}.\]
Moreover, the rate function $I_0$ is equal to
\begin{equation}\label{I_0}
	I_0 (\mu) = \begin{cases}
		\int_{\R^d}	h(\mu(u)) du, \quad &\text{if } \mu(du) = \mu(u) du,\\
		+ \infty, \quad &\text{otherwise},
	\end{cases}
\end{equation}
where $h$ is the Legendre transform of $M_\rho$, 
\[h(\gamma) = \sup_{\theta} \big\{ \theta \gamma - \log M_\rho (\theta)\big\}.\]
Thus, by Laplace-Varadhan Theorem (see \cite[Theorem 3.1, Appendix 2]{klscaling}), 
\[\limsup_{n \rightarrow \infty} \frac{n^d}{2a_n^2} 	\log \E^n_{\rho}  \Big[\exp \Big\{   \frac{2 a_n^2}{n^d} J (\mu^n_t)  \Big\}\Big] \leq \sup_{\mu \in \mc{S}^\prime} \big\{ J(\mu) - I_0 (\mu) / 2\big\}.\]
Formally, taking $J = I_0 / 2$,  the last line is zero, and thus
\[E_{Q^*} [I_0 (\mu_t)] = \lim_{n \rightarrow \infty} \frac{n^d}{a_n^2}	\log \E^n_{H,\phi}[I_0 (\mu^n_t)] \leq 2 [K_0 + C(H) T].\]
Thus,  by Fubini's Theorem,
\[E_{Q^*} \Big[ \int_0^T I_0 (\mu_t) dt\Big] = \int_0^TE_{Q^*} [I_0 (\mu_t)] dt   \leq 2[ K_0 T + C(H) T^2]. \]
Together with \eqref{I_0}, $Q^*$ concentrates on measures which are absolutely continuous with respect to the Lebesgue measure for $t \in [0,T]$ almost everywhere.  We conclude the proof by changing the measure $\mu_t (du)$ is a time set of measure zero if necessary.

(3)  To characterize the limit point,  for any $G \in \mc{C}^{1,2}_c ([0,T] \times \R^d)$ and for any $\varepsilon > 0$,  by \eqref{mart 1}, \eqref{int 1} and \eqref{mart vanish}, 
\begin{align*}
\lim_{n \rightarrow \infty}	\P^n_{H,\phi} \Big(&\Big|\<\mu^n_t,G_t\> - \<\mu^n_0,G_0\> - 	\int_0^t  \<\mu^n_s, \Big(\partial_{s} + \frac{1}{2d}\Delta_n \Big)G_s\> ds \\
	&- \int_0^t  \frac{1}{2dn^d} \sum_{x \in \Z^d} \sum_{i=1}^d \big(\eta_x(s) - \eta_{x+e_i}(s)\big)^2 \partial_{u_i} H_s(\tfrac{x-v_nsm}{n}) \partial_{u_i} G_s(\tfrac{x-v_nsm}{n}) ds 
	\\
	&+\int_0^t \frac{\alpha n^{1-\beta}}{da_n} \sum_{x \in \Z^d} \sum_{i=1}^d (\eta_x (s)- \rho)(\eta_{x+e_i} (s) - \rho) \partial_{u_i} G_s (\tfrac{x-v_nsm}{n}) ds\Big| > \varepsilon \Big) = 0.
\end{align*}
By Lemmas \ref{lem 1}, \ref{lem:replacement-1}, and Proposition \ref{pro: Q^n}, we could replace $\Delta_n G_s$ by $\Delta G_s$ in the third term inside the absolute value, replace $ \big(\eta_x(s) - \eta_{x+e_i}(s)\big)^2$ by $2 \chi (\rho)$ in the fourth one, and the last term inside the absolute value vanishes in $\P^n_{H,\phi}$-probability.
Thus, for any $\varepsilon > 0$,
\begin{multline*}
	\lim_{n \rightarrow \infty}	\P^n_{H,\phi} \Big(\Big|\<\mu^n_t,G_t\> - \<\mu^n_0,G_0\> - 	\int_0^t  \<\mu^n_s, \Big(\partial_{s} + \frac{1}{2d}\Delta \Big) G_s\> ds \\
	-\frac{ \chi (\rho) }{d} \sum_{i=1}^d \int_0^t  \int_{\R^d}  \partial_{u_i} H_s(u) \partial_{u_i} G_s(u) du ds \Big| > \varepsilon \Big) = 0.
\end{multline*}
This concludes the proof.
\end{proof}

\section{Proof of Theorem \ref{thm-1}}\label{sec: oc}

In this section, we assume $d=1$. The following lemma states that if $\tilde{f}^\prime (\rho) \neq 0$, then we only need to consider the MDP for the occupation time. 

\begin{lemma}
If $\tilde{f}^\prime (\rho) \neq 0$, then for any $\varepsilon > 0$, 
\[\lim_{n \rightarrow \infty} \frac{n}{a_n^2} \log \P^n_\rho \Big( \sup_{0 \leq t \leq T} \Big|   \Gamma^n_t (f) - \tilde{f}^\prime (\rho) \Gamma^n_t \Big| > \varepsilon\Big) = - \infty.\]
\end{lemma}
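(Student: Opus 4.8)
The plan is to write the difference $\Gamma^n_t(f) - \tilde f'(\rho)\Gamma^n_t$ as a sum of terms that are each super-exponentially negligible on the scale $a_n^2/n$. Since $f$ is local, Taylor-expand $\tilde f$ around $\rho$: we have, with $g(\eta) := f(\eta) - \tilde f(\rho) - \tilde f'(\rho)(\eta_0 - \rho)$, the identity
\[
\Gamma^n_t(f) - \tilde f'(\rho)\Gamma^n_t = \frac{n}{a_n}\int_0^t g(\eta(s))\,ds.
\]
The function $g$ is local, has mean zero under $\nu_\rho$, and moreover its "linear part" vanishes in the sense that $\sum_{x} (\partial_{\eta_x} \tilde g)(\rho) = 0$; more precisely $g$ is orthogonal in $L^2(\nu_\rho)$ to the degree-$\le 1$ functions. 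This is exactly the structural input that makes the occupation-time-type integral of $g$ smaller than that of a generic local function.

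\textbf{Key steps.} First, I would reduce via Markov's inequality and the exponential Chebyshev bound to showing, for every $A>0$,
\[
\lim_{n\to\infty}\frac{n}{a_n^2}\log \E^n_\rho\Big[\exp\Big\{\frac{A a_n^2}{n}\sup_{0\le t\le T}\Big|\frac{n}{a_n}\int_0^t g(\eta(s))\,ds\Big|\Big\}\Big] = 0,
\]
then pass the supremum outside by discretizing time into $O(n^{c})$ meshpoints (using that $g$ is bounded, so time increments of size $n^{-c}$ contribute $O(n^{1-c}/a_n)$, and invoking \eqref{basic inequality 2} together with $a_n\gg n^{1/2}\sqrt{\log n}$), and remove the absolute value via \eqref{basic inequality 1}. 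This leaves a Feynman--Kac estimate: by \cite[Appendix 1.7]{klscaling} the exponent is bounded by $\int_0^{t}$ of the variational expression
\[
\sup_{f:\ \nu_\rho\text{-density}}\Big\{\frac{A a_n}{1}\int g(\eta)f(\eta)\,d\nu_\rho \;-\; \frac{n}{A a_n^2}D(\sqrt f)\Big\},
\]
where $D$ is the symmetric Dirichlet form. Here is where the vanishing linear part of $g$ matters: because $g$ is orthogonal to constants and to $\{\eta_x-\rho\}$, the standard second-order (spectral-gap / $H_{-1}$-norm) bound for $\int g f\,d\nu_\rho$ in terms of $D(\sqrt f)$ gains an extra factor, yielding $\int g f\,d\nu_\rho \le \epsilon D(\sqrt f) + C(\epsilon,g)/\ell$ type estimates after a block replacement, and one checks the resulting bound is $o(n/a_n^2)$ precisely under Assumption~\ref{assump:beta a_n}. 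This is morally the same computation as in Proposition~\ref{pro: Q^n} and in \cite{gao2024deviation}, so I would cite those and adapt.

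\textbf{Main obstacle.} The delicate point is the dynamical (Feynman--Kac) estimate: one must show that the quadratic-in-$g$ correction, combined with the asymmetric part of the generator $L_n$, still produces an exponent of order $o(a_n^2/n)$. The symmetric contribution is controlled by the $H_{-1}$ estimate as above; the asymmetric perturbation $\alpha n^{2-\beta}L_a$ contributes an extra drift term which, after the substitution $g = f - \tilde f(\rho) - \tilde f'(\rho)(\eta_0-\rho)$, can be handled exactly as the term $Q^n_s$ was handled in Proposition~\ref{pro: Q^n} --- indeed, modulo gradients of test functions, it reduces to sums of the quadratic fields $(\eta_x-\rho)(\eta_{x+1}-\rho)$ already controlled there. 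So the real work is bookkeeping: verifying that every error generated by these reductions is super-exponentially small on the scale $a_n^2/n$ under the dimension-one constraints $\beta > 2/3$, $n^{1/2}\sqrt{\log n}\ll a_n \ll n\wedge n^{\beta}$, and (when $2/3<\beta\le 1$) $\rho=1/2$, which is needed so that the asymmetric drift term $\propto(1-2\rho)$ does not spoil the estimate. I expect no conceptual novelty beyond what is already in Proposition~\ref{pro: Q^n} and Lemma~\ref{lem:replacement-1}, so the write-up should consist largely of invoking those results with $g$ in place of the relevant local functions.
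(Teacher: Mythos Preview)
Your approach is essentially the same as the paper's: the paper simply cites \cite[Section~5]{gao2024deviation} for the fixed-time estimate and then invokes the time-discretization argument from Proposition~\ref{pro: Q^n} (using $a_n\gg\sqrt{n\log n}$) to pass to the uniform-in-time statement, which is exactly the Markov/Feynman--Kac route you outline. A couple of inaccuracies to clean up, though none is fatal. First, your displayed variational expression has the wrong constants: after Feynman--Kac and dividing by $a_n^2/n$ the supremum should read $\tfrac{An}{a_n}\int g f\,d\nu_\rho-\tfrac{n^3}{a_n^2}D(\sqrt f)$, and it is this scaling that makes the estimate close. Second, your ``main obstacle'' is illusory: in the Feynman--Kac bound only the \emph{symmetric} Dirichlet form of $L_n$ appears (cf.\ the proof of Proposition~\ref{pro: Q^n}), so the asymmetric part $\alpha n^{2-\beta}L_a$ contributes nothing extra here and there is no drift term to control. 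For the same reason, the hypothesis $\rho=1/2$ is \emph{not} used in this lemma---it enters only later, in Lemma~\ref{lem:occu replacement 2}, when one compares $\Gamma^n_t$ with the fluctuation field $\langle\mu^n_s,\varphi_\varepsilon\rangle$ in the moving frame. Finally, the precise structural fact you need about $g=f-\tilde f(\rho)-\tilde f'(\rho)(\eta_0-\rho)$ is that $\tilde g'(\rho)=0$; it is not literally orthogonal to each $\eta_x-\rho$, only to their sum, but that is what the replacement/$H_{-1}$ argument uses.
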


The above result was proved in \cite[Section 5]{gao2024deviation}  without the supremum over time inside the parenthesis.  However, since $a_n \gg \sqrt{n \log n}$, by dividing the time interval $[0,T]$ into small time intervals as in Proposition \ref{pro: Q^n}, it is not hard to extend  the result to be uniform in time. For this reason, we omit the proof.  

In the rest of this section, we prove the sample path MDP for the occupation time.  In subsection \ref{subsec: exp tight oc}, we show the sequence $\{\Gamma^n_t: 0 \leq t \leq T\}_{n \geq 1}$ is exponentially tight.  Subsections \ref{subsec: up oc} and \ref{subsec: low oc} are devoted to the proof of the finite-dimensional MDP of the occupation time.  We conclude the proof of the sample path MDP in Subsection \ref{subsec: conclude oc}. Finally, in Subsection \ref{subsec: variation pro}, we calculate  a related variational problem.

\subsection{Exponential tightness}\label{subsec: exp tight oc}  The exponential tightness of the sequence $\{\Gamma^n_t: 0 \leq t \leq T\}_{n \geq 1}$ follows directly from the following result and  Lemma \ref{lem:exp-tight-2}. Let $\varphi \in \mc{C}^\infty_c (\R)$ have support contained in $(0,1)$ such that $\int_{\R} \varphi (u) du = 1$.  For any $\varepsilon > 0$, define $\varphi_\varepsilon(u) = \varepsilon^{-1} \varphi (u/\varepsilon)$.

\begin{lemma}\label{lem:occu replacement 2}
Assume $\beta > 1$ or $\rho = 1/2$. Then,	for any $\delta > 0$,
	\begin{equation}
		\limsup_{\varepsilon \rightarrow 0}  \limsup_{n \rightarrow \infty} \frac{n}{a_n^2} \log \P^n_\rho \Big( \sup_{0 \leq t \leq T} \Big| \Gamma^n_t-   \int_0^t \<\mu^n_s,\varphi_\varepsilon\> ds \Big| > \delta\Big) = - \infty.
	\end{equation}
\end{lemma}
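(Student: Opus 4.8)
The plan is to reduce the claim to a statement about a local function and then invoke a super-exponential replacement estimate of the same type as Lemma \ref{lem:replacement-1} and Proposition \ref{pro: Q^n}. First I would write the difference inside the probability as a time integral of a local function of the configuration. Recall that $\Gamma^n_t = \frac{n}{a_n}\int_0^t (\eta_0(s) - \rho)\,ds$, while $\<\mu^n_s,\varphi_\varepsilon\> = \frac{1}{a_n}\sum_{x\in\Z}(\eta_x(s)-\rho)\varphi_\varepsilon(\tfrac{x-v_nsm}{n})$. Using $\int_{\R}\varphi_\varepsilon = 1$ and $\sum_x \varphi_\varepsilon(x/n)/n \to 1$, the integrand $\Gamma^n_t - \int_0^t \<\mu^n_s,\varphi_\varepsilon\>\,ds$ can be rewritten, up to negligible boundary/normalization errors of order $O(\varepsilon)$ or $O(n^{-1})$ in the exponential scale, as
\[
\frac{1}{a_n}\int_0^t \sum_{x\in\Z}\big(\eta_0(s)-\eta_x(s)\big)\,\tfrac{1}{n}\varphi_\varepsilon(\tfrac{x-v_nsm}{n})\,ds,
\]
i.e. the time integral of a weighted difference of occupation variables over a mesoscopic box of size $\varepsilon n$ centered near the (moving) origin. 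This is precisely the type of "one-block/two-blocks" object controlled by a local replacement.

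The key step is then a super-exponential estimate: by Feynman–Kac and the entropy/Dirichlet-form bound (exactly as in the proof of Proposition \ref{pro: Q^n}), it suffices to bound, for every $A>0$,
\[
\sup_{f:\,\nu_\rho\text{-density}}\Big\{ \frac{An}{a_n}\int_{\mc{X}_1}\sum_{x}\big(\eta_0-\eta_x\big)\tfrac{1}{n}\varphi_\varepsilon(\tfrac{x-v_nsm}{n}) f\,d\nu_\rho - \frac{C n^2}{a_n}D_s(\sqrt f) - \frac{C\alpha n^{2-\beta}}{a_n}|\langle\text{asym}\rangle| \Big\},
\]
and to show this is $o(a_n/n)$ after integrating in $s$ over $[0,T]$ and taking $\varepsilon\to0$. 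For the symmetric part one uses the standard telescoping $\eta_0 - \eta_x = \sum_{j=0}^{x-1}(\eta_j - \eta_{j+1})$ together with the flow lemma (Lemma \ref{lem:flow}) in $d=1$, where $g_1(\ell)=\ell$: the cost of moving mass across a box of size $\ell \sim \varepsilon n$ against the Dirichlet form is $O(\ell) = O(\varepsilon n)$ per unit, and balancing with the Dirichlet-form prefactor $n^2/a_n$ yields a bound of order $\varepsilon \cdot n^2/a_n \cdot (a_n/n)^{-1}\cdot(\text{stuff})$ — more precisely one gets an error $\asymp C(A)\,\varepsilon$ on the scale $a_n/n$ (this is where $a_n \gg \sqrt{n\log n}$ and $a_n \gg n^{1-\beta/2}$, hence $\ell^d g_1(\ell)\ll n^{2\beta}$-type room from Assumption \ref{assump:beta a_n}, enter), which vanishes upon sending $\varepsilon\to0$ after $n\to\infty$. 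The asymmetric contribution is handled separately: here the drift term produces, after the same manipulations, a factor $\alpha n^{1-\beta}/a_n$ times a sum of the form $\sum_x[\eta_x(1-\eta_{x+1})-\chi(\rho)]\,(\cdots)$; when $\rho=1/2$ the linear-in-$\eta$ part vanishes by the symmetry $1-2\rho=0$ and the remainder is a genuine fluctuation term controlled by Proposition \ref{pro: Q^n}, while when $\beta>1$ the prefactor $n^{1-\beta}/a_n$ is small enough (using $a_n\gg n^{1-\beta/2}$, so $n^{1-\beta}/a_n \ll n^{-\beta/2}\to0$) that the contribution is super-exponentially negligible on its own. This dichotomy is exactly the hypothesis "$\beta>1$ or $\rho=1/2$."

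To make the supremum over $0\le t\le T$ inside the probability harmless, I would first discretize time on a grid of mesh $T/n^{2}$ (as in the proof of Proposition \ref{pro: Q^n}): since $|\eta_0(s)-\eta_x(s)|\le 1$ and $\tfrac1n\sum_x\varphi_\varepsilon(\tfrac{x}{n})$ is bounded, the oscillation of the integral between consecutive grid points is $O(n^{-1}\cdot n/a_n)=O(1/a_n)$, negligible on the scale $a_n/n$; then use the union bound \eqref{basic inequality 2}, which costs only $\log(n^2)\cdot n/a_n^2 \to 0$ because $a_n\gg\sqrt{n\log n}$, and remove the absolute value using \eqref{basic inequality 1}. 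What remains at each fixed time is the Feynman–Kac/Dirichlet-form estimate described above.

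The main obstacle I expect is controlling the asymmetric term uniformly under the single assumption "$\beta>1$ or $\rho=1/2$": one must check that, in the regime $2/3<\beta\le1$ with $\rho=1/2$, the quadratic fluctuation term coming from $\eta_x(1-\eta_{x+1})-\chi(\rho)$ — after the replacement $\eta_0-\eta_x = \sum(\eta_j-\eta_{j+1})$ spreads it over a box of size $\varepsilon n$ — is still super-exponentially small, which requires re-running the argument of Proposition \ref{pro: Q^n} with the test function $\varphi_\varepsilon$ in place of $\partial_u H_s$ and verifying that the constants degrade at most like $C(A)\varepsilon$; the mesoscopic box size $\varepsilon n$ must be compatible with the choice of $\ell(n)$ in \eqref{ell}, which is where one has to be a little careful. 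Everything else is a routine adaptation of the estimates already established in Sections \ref{sec:upper hydro}.
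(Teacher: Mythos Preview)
Your core argument --- telescope $\eta_0-\eta_x=\sum_j(\eta_j-\eta_{j+1})$, apply Feynman--Kac and Young's inequality against the symmetric Dirichlet form, discretize time to handle the supremum --- is exactly the paper's proof of \eqref{occu replacement 1}, and yields the correct bound $C(\varphi)A^2\varepsilon\to 0$. The paper makes the structure cleaner by splitting in two: first replace $\<\mu^n_s,\varphi_\varepsilon\>$ by the \emph{static} discrete convolution $\tfrac{n}{a_n}[(\eta(s)\ast\varphi^n_\varepsilon)_0-\rho]$ (this is \eqref{occu replacement 2}), then replace the latter by $\tfrac{n}{a_n}(\eta_0(s)-\rho)$ via your Dirichlet-form estimate (this is \eqref{occu replacement 1}).

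Where your proposal goes wrong is the role of the hypothesis ``$\beta>1$ or $\rho=1/2$''. It has nothing to do with the asymmetric generator: in the Feynman--Kac variational bound only the symmetric Dirichlet form appears (the symmetric part of $L_a$ in $L^2(\nu_\rho)$ is again $L_s$), so no term of the form $\sum_x[\eta_x(1-\eta_{x+1})-\chi(\rho)](\cdots)$ ever arises, and Proposition~\ref{pro: Q^n} plays no role here. The hypothesis is needed solely to control the \emph{moving frame} built into $\mu^n_s$: the argument of $\varphi_\varepsilon$ is $(x-v_ns)/n$, so the averaging box is centered at $v_ns=\alpha(1-2\rho)n^{2-\beta}s$, not at the origin. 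If $\rho=1/2$ then $v_n=0$; if $\beta>1$ then $v_n/n\to 0$ and a Taylor expansion produces the linear error $\tfrac{v_ns}{na_n}\sum_x(\eta_x-\rho)\varphi_\varepsilon'(\theta_n(x))$, which is super-exponentially small by Lemma~\ref{lem:replacement-1}. Without the hypothesis the box sits at distance $\gg n$ from the origin, and your telescoping sum would have $\gg\varepsilon n$ terms, so the Dirichlet-form bound blows up. Your ``asymmetric contribution'' paragraph should therefore be replaced by this moving-frame argument; the rest of the proposal is correct and matches the paper.
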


\begin{proof}
For any configuration $\eta \in \mathcal{X}_1$, let $(\eta \ast \varphi^n_\varepsilon)_x$ be the discrete convolution of $\eta$ with $\varphi_\varepsilon$,
\[(\eta \ast \varphi^n_\varepsilon)_x = \frac{1}{n} \sum_{y} \eta_{x+y} \varphi_\varepsilon (y/n),\quad x \in \Z.\]
Then, it suffices to show that for any $\delta > 0$,
\begin{align}
			\limsup_{\varepsilon \rightarrow \infty}  \limsup_{n \rightarrow \infty} \frac{n}{a_n^2} \log \P^n_\rho \Big( \sup_{0 \leq t \leq T} \Big| \frac{n}{a_n} \int_0^t  [\eta_0 (s) - (\eta(s) \ast \varphi^n_\varepsilon)_0] ds \Big| > \delta\Big) = - \infty,\label{occu replacement 1}\\
	\limsup_{n \rightarrow \infty} \frac{n}{a_n^2} \log \P^n_\rho \Big( \sup_{0 \leq t \leq T} \Big| \int_0^t \Big\{ \frac{n}{a_n}  \big[(\eta(s) \ast \varphi^n_\varepsilon)_0 - \rho\big] -  \<\mu^n_s, \varphi_\varepsilon\> \Big\}  ds \Big| > \delta\Big) = - \infty. \label{occu replacement 2}
\end{align}

We first prove \eqref{occu replacement 2}. Notice that 
\[ \frac{n}{a_n} \big[(\eta(s) \ast \varphi^n_\varepsilon)_0 - \rho\big] -  \<\mu^n_s, \varphi_\varepsilon\>  = \frac{-v_n s}{na_n} \sum_{y} (\eta_{y} (s) - \rho )\varphi_\varepsilon^\prime (\theta_n(y)) + O_{\varphi_\varepsilon} (1/a_n)\]
for some $\theta_n(y)$ between $y/n$ and $(y - v_n s)/n$.  Thus,   \eqref{occu replacement 2} holds if  $\beta > 1$ by Lemma \ref{lem:replacement-1}, and also holds if   $\rho = 1/2$ since $v_n = 0$ in this case.

The proof of \eqref{occu replacement 1} is similar to \eqref{super exp 1}, so we only sketch it.  By Chebyshev's inequality, we only need to show that, for any $A > 0$,
\begin{equation}
	\limsup_{\varepsilon \rightarrow 0}  \limsup_{n \rightarrow \infty} \frac{n}{a_n^2} \log \E^n_\rho \Big[  \exp \Big\{ \sup_{0 \leq t \leq T}  \Big| a_n A \int_0^t  [\eta_0 (s) - (\eta(s) \ast \varphi^n_\varepsilon)_0] ds \Big|  \Big\} \Big]  = 0.
\end{equation}
Take $t_i = i T / n$ for $0 \leq i \leq n$. As in the proof of Proposition \ref{pro: Q^n}, since $a_n \gg \sqrt{n \log n}$, by Feynman-Kac formula,  the last limit is bounded by
\begin{equation}\label{feynman kac}
\limsup_{\varepsilon \rightarrow 0}  \limsup_{n \rightarrow \infty}	\,T \sup_{f: \nu_\rho{\rm-density}} \Big\{  \frac{An}{a_n} \int [\eta_0 - (\eta \ast \varphi^n_\varepsilon)_0] f d \nu_\rho - \frac{n^3}{a_n^2} D(\sqrt{f})\Big\}.
\end{equation}
Notice that
\begin{align*}
	\eta_0 - (\eta \ast \varphi^n_\varepsilon)_0= \frac{1}{n} \sum_{y=0}^{\varepsilon n} \sum_{z=0}^{y-1} (\eta_{z} - \eta_{z+1}) \varphi_\varepsilon(y/n) + O_{\varphi_\varepsilon} (1/n),
\end{align*}
where $O_{\varphi_\varepsilon} (1/n) \leq C/n$ for some constant $C=C(\varphi_\varepsilon)$.  By making change of variables $\eta \mapsto \eta^{z,z+1}$, the first term inside the supremum in \eqref{feynman kac} equals
\[\frac{A}{2a_n} \sum_{y=0}^{\varepsilon n} \sum_{z=0}^{y-1} \int (\eta_{z} - \eta_{z+1}) (f(\eta) - f(\eta^{z,z+1})) \varphi_\varepsilon(y/n) d \nu_\rho +  O_{\varphi_\varepsilon}(A/a_n). \]
By Young's inequality, for any $B > 0$, the first term in the  last line is bounded by 
\begin{align*}
	&\frac{A}{4Ba_n} \sum_{y=0}^{\varepsilon n} \sum_{z=0}^{y-1} \int  (\sqrt{f(\eta)} - \sqrt{f(\eta^{z,z+1})})^2 d \nu_\rho \\
	&\qquad + \frac{BA}{4a_n} \sum_{y=0}^{\varepsilon n} \sum_{z=0}^{y-1} \int (\sqrt{f(\eta)} + \sqrt{f(\eta^{z,z+1})})^2 \varphi_\varepsilon(y/n)^2 d \nu_\rho\\
	&\leq \frac{A\varepsilon n}{Ba_n} D(\sqrt{f}) + \frac{C(\varphi) BAn^2}{a_n}.
\end{align*}
Taking $B = A \varepsilon a_n/n^2$, we bound the expression in \eqref{feynman kac} by $C(\varphi) T A^2 \varepsilon + T O_{\varphi_\varepsilon}(A/a_n)$, thus concluding the proof.
\end{proof}

\subsection{Finite-dimensional upper bound}\label{subsec: up oc}  In this subsection, we prove the upper bound for the MDP of  the occupation time in the sense of finite-dimensional distributions. For any integer $k > 0$,  denote $\boldsymbol{\alpha} = (\alpha_1,\ldots,\alpha_k)^T \in \R^k$.   For $t_1 < t_2 < \ldots < t_k$, let $A = A(t_1,\ldots,t_k) = (a(t_i,t_j))_{1 \leq i,j \leq k}$ be the $k \times  k$ matrix with $a(s,t)$ being the covariance function of the fractional Brownian motion with parameter $3/4$,
\[a(s,t) = \frac{1}{2} \big( t^{3/2}+s^{3/2} - |t-s|^{3/2}\big), \quad s,t \geq 0.\]

\begin{lemma}\label{lem:finite dimensional upper}
Fix integer $k > 0$.  For any closed set $F_i \subset \R$, $1 \leq i \leq k$,
\[\limsup_{n \rightarrow \infty} \frac{n}{a_n^2} \log \P^n_\rho \Big(\Gamma^n_{t_i} \in F_i, \; 1 \leq i \leq k\Big) \leq - \inf \Big\{\frac{1}{2\sigma^2} \boldsymbol{\alpha}^T A^{-1} \boldsymbol{\alpha}: \alpha_i \in F_i, 1 \leq i \leq k \Big\},\]
	where $\sigma^2 = 4\sqrt{2} \chi(\rho)/(3\sqrt{\pi})$.
\end{lemma}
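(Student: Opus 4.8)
The plan is to obtain the finite-dimensional upper bound as a consequence of the sample-path MDP for the density fluctuation field (Theorem \ref{thm:mdphl}) together with the contraction principle. First I would recall that, by Lemma \ref{lem:occu replacement 2} and the super-exponential estimates already established, the vector $(\Gamma^n_{t_1},\ldots,\Gamma^n_{t_k})$ is, up to a super-exponentially negligible error, equal to $\big(\int_0^{t_i} \langle \mu^n_s, \varphi_\varepsilon\rangle\, ds\big)_{1\le i\le k}$ for $\varepsilon$ small; since adding a super-exponentially small perturbation does not change the upper bound (by the inequality \eqref{basic inequality 1}), it suffices to prove the bound for this smoothed functional and then let $\varepsilon\to 0$. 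The map $\mu \mapsto \big(\int_0^{t_i}\langle \mu_s,\varphi_\varepsilon\rangle\,ds\big)_i$ is continuous from $\mc{D}([0,T],\mc{S}')$ to $\R^k$, so by the contraction principle the left side is bounded by $-\inf\{\mc{Q}(\mu): \int_0^{t_i}\langle\mu_s,\varphi_\varepsilon\rangle\,ds \in F_i\}$.

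Next I would solve the resulting variational problem. Because $\mc{Q}(\mu)=\mc{Q}_{dyn}(\mu)+\mc{Q}_0(\mu_0)$ with $\mc{Q}(\mu)<\infty$ forcing $\mu$ to solve the linear PDE \eqref{mu_PDE} with data $(\phi,H)$ and $\mc{Q}(\mu)=\tfrac{\chi(\rho)}{2}\big(\|\phi\|_{L^2}^2+[H,H]\big)$ by Lemma \ref{lem:property rate function}, the constraint $\int_0^{t_i}\langle\mu_s,\varphi_\varepsilon\rangle\,ds=\alpha_i$ becomes a finite collection of linear constraints on $(\phi,H)$ via the explicit representation \eqref{mu formula}. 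Minimizing a quadratic form subject to finitely many linear constraints is a Gaussian/Hilbert-space computation: the optimal value is $\tfrac12 \boldsymbol{\alpha}^T C_\varepsilon^{-1}\boldsymbol{\alpha}$ where $C_\varepsilon$ is the covariance matrix of the Gaussian field $\big(\int_0^{t_i}\langle X_s,\varphi_\varepsilon\rangle\,ds\big)_i$ associated with the Gaussian measure whose large-deviations rate function is $\mc{Q}$ — that is, $X$ is the stationary generalized Ornstein–Uhlenbeck fluctuation field of the SSEP. As $\varepsilon\to 0$ the smoothing disappears: $\langle X_s,\varphi_\varepsilon\rangle \to X_s(0)$, so $\int_0^{t_i} X_s(0)\,ds$ is (a constant multiple of) the limiting occupation time, whose covariance is $\sigma^2 a(t_i,t_j)$ with $\sigma^2 = 4\sqrt 2\,\chi(\rho)/(3\sqrt\pi)$ and $a(\cdot,\cdot)$ the fBm covariance — this is exactly the content of the Kipnis invariance principle \cite{kip1987}. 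Hence $C_\varepsilon \to \sigma^2 A$ and the bound becomes $-\inf\{\tfrac{1}{2\sigma^2}\boldsymbol{\alpha}^T A^{-1}\boldsymbol{\alpha}: \alpha_i\in F_i\}$, as claimed. One has to be slightly careful taking $\varepsilon\to 0$ after the contraction step: since $F_i$ are closed, a standard argument passing from the $\varepsilon$-problem to its limit (lower semicontinuity of the rate and the uniformity in Lemma \ref{lem:occu replacement 2}) gives the stated inequality.

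The main obstacle I anticipate is the variational computation identifying the optimal value with $\tfrac{1}{2\sigma^2}\boldsymbol{\alpha}^T A^{-1}\boldsymbol{\alpha}$: one must show rigorously that minimizing $\tfrac{\chi(\rho)}{2}(\|\phi\|^2+[H,H])$ over $(\phi,H)$ subject to the linear constraints coming from \eqref{mu formula} yields precisely the quadratic form with the fBm covariance. The cleanest route is to recognize the minimization as the dual of a Gaussian covariance computation — i.e., that $\mc{Q}$ is the Cramér rate of a centered Gaussian process on $\mc{D}([0,T],\mc{S}')$ and that linear functionals of a Gaussian have rate function $\alpha \mapsto \tfrac12\alpha^TC^{-1}\alpha$ with $C$ the pushforward covariance — and then to compute the covariance of $\int_0^{t_i}\langle\mu_s,\varphi_\varepsilon\rangle ds$ under this Gaussian, which in the $\varepsilon\to0$ limit is the classical occupation-time variance calculation of \cite{kip1987}. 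Carrying out the explicit heat-kernel integrals in \eqref{mu formula} to verify the $t^{3/2}$ scaling and the constant $\sigma^2$ is routine but is where the real work lies; I would relegate the detailed computation to Subsection \ref{subsec: variation pro}.
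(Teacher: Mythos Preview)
Your proposal is correct and follows the same route as the paper: replace $\Gamma^n_{t_i}$ by $\int_0^{t_i}\langle\mu^n_s,\varphi_\varepsilon\rangle\,ds$ via Lemma~\ref{lem:occu replacement 2}, push the MDP of Theorem~\ref{thm:mdphl} through the continuous map $\mu\mapsto\big(\int_0^{t_i}\langle\mu_s,\varphi_\varepsilon\rangle\,ds\big)_i$, and defer the identification of the resulting quadratic form to Proposition~\ref{prop:variational problem}. The one difference worth noting is how the mollifier is removed: you propose solving the $\varepsilon$-variational problem as $\tfrac12\boldsymbol{\alpha}^T C_\varepsilon^{-1}\boldsymbol{\alpha}$ via the Gaussian/Hilbert duality and then letting $C_\varepsilon\to\sigma^2 A$, whereas the paper first eliminates $\varepsilon$ by the convexity inequality $\mc{Q}(\mu\ast\bar\varphi_\varepsilon)\le\mc{Q}(\mu)$ (which converts the constraint $\int_0^{t_i}\langle\mu_s,\varphi_\varepsilon\rangle\,ds\in F_i^\delta$ into $\int_0^{t_i}\mu_s(0)\,ds\in F_i^\delta$) and only then solves the variational problem once, at $\varepsilon=0$, over the smooth class $\mc{A}$. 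The paper's order sidesteps having to justify the interchange of $\inf$ and $\lim_{\varepsilon\to0}$ and the heuristic ``$\langle X_s,\varphi_\varepsilon\rangle\to X_s(0)$'' (the limiting field is only distribution-valued); your route is conceptually clean but in the end reduces to the same heat-kernel computation you already plan to carry out in Subsection~\ref{subsec: variation pro}.
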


\begin{proof}
 For any $\delta > 0$ and any set $F \subset \R$, let $F^\delta := \{x: |y-x| \leq \delta \text{ for some } y \in F\}$ be the $\delta$-dilation of the set $F$. By Lemma \ref{lem:occu replacement 2}, the limit on the left side in the lemma is bounded by, for any $\delta > 0$,
\[\limsup_{\varepsilon \rightarrow 0}  \limsup_{n \rightarrow \infty}   \frac{n}{a_n^2} \log \P^n_\rho \Big( \int_0^{t_i} \<\mu^n_s,\varphi_\varepsilon\> ds \in F_i^\delta,  \; 1 \leq i \leq k \Big).\]
Since the mapping $\mu \mapsto \int_0^t \<\mu_s,\varphi_\varepsilon\> ds$ is continuous, by Theorem \ref{thm:mdphl}, the last line is bounded by
\begin{equation}\label{upper bound 1}
-	\sup_{\delta > 0} \liminf_{\varepsilon \rightarrow 0} \inf \Big\{  \mc{Q} (\mu):  \int_0^{t_i} \<\mu_s,\varphi_\varepsilon\> ds \in F_i^\delta,  \; 1 \leq i \leq k \Big\}.
\end{equation}
Define
\begin{equation}
\mc{A} = \Big\{ \mu:  \mu\;\text{given by \eqref{mu formula} with } \phi \in \mathcal{S} (\R) \text{ and } H (t,\cdot), \partial_t H (t,\cdot) \in \mc{S} (\R), \forall t \in [0,T] \Big\}.
\end{equation}
We claim that \eqref{upper bound 1} is  bounded from above by
\begin{equation}\label{upper bound 2}
	- \inf_{\alpha_i \in F_i, 1 \leq i \leq k}  \inf \Big\{  \mc{Q} (\mu): \mu \in \mc{A},   \int_0^{t_i} \mu_{s} (0) ds = \alpha_i, \; 1 \leq i \leq k\Big\}. 
\end{equation}
We then conclude the proof of the lemma by using Proposition \ref{prop:variational problem} below.

For the claim, it is trivial if the infimum in \eqref{upper bound 1} is infinity, so we only need to consider the $\mu$'s such that $\mc{Q} (\mu) < + \infty$.  Then, by Lemma \ref{lem:property rate function}, $\mu$ is the unique weak solution to \eqref{mu_PDE}. For $\varepsilon > 0$, define $\mu_s^\varepsilon = \mu_s \ast \bar{\varphi}_\varepsilon$ as the convolution of $\mu_{s}$ with  $\bar{\varphi}_\varepsilon (\cdot) := \varphi_\varepsilon (-\cdot)$.  By definition, $\<\mu_s,\varphi_\varepsilon\> = \mu_s^\varepsilon (0)$. Since $\mc{Q}$ is convex, we have $\mc{Q} (\mu^\varepsilon) \leq \mc{Q} (\mu)$. Thus,  for any $\varepsilon > 0$, 
\[\inf \Big\{  \mc{Q} (\mu):  \int_0^{t_i} \<\mu_s,\varphi_\varepsilon\> ds \in F_i^\delta,  \; 1 \leq i \leq k\Big\} \geq \inf \Big\{  \mc{Q} (\mu):  \int_0^{t_i} \mu_s (0) ds \in F_i^\delta,  \; 1 \leq i \leq k \Big\},\]
which permits us to bound \eqref{upper bound 1} from above by
\begin{equation}\label{upper bound 3}
		-   \inf \Big\{  \mc{Q} (\mu):    \int_0^{t_i} \mu_{s} (0) ds \in F_i, \; 1 \leq i \leq k\Big\}. 
\end{equation}
For any $\delta > 0$, for any $H \in \mc{H}^1$ and any $\phi \in L^2 (\R)$, by Lemma \ref{lem:property rate function} and \eqref{mu formula}, there exists $H^\delta \in \mc{C}^{1,\infty}_c ([0,T] \times \R)$ and $\phi \in \mc{C}^{\infty}_c (\R)$ such that
\[\mc{Q} (\mu) \geq \mc{Q} (\mu^\delta) - \delta, \quad \big|  \int_0^{t_i} \mu_{s} (0) - \mu_s^\delta (0) ds\big|  \leq \delta, \; 1 \leq i \leq k.\]
Since $\mu^\delta  \in \mc{A}$, we bound \eqref{upper bound 3} from above by 
\begin{equation*}
	-   \inf \Big\{  \mc{Q} (\mu):  \mu \in \mc{A},  \int_0^{t_i} \mu_{s} (0) ds \in F_i^\delta, \; 1 \leq i \leq k\Big\} + \delta. 
\end{equation*}
Finally, we prove the claim by letting $\delta \rightarrow 0$. 
\end{proof}

 \begin{proposition}\label{prop:variational problem}
	For any $\boldsymbol{\alpha} =  (\alpha_1,\ldots,\alpha_k)^T \in \R^k$ and for any $0 < t_1 < t_2 < \ldots < t_k \leq T$,
	\begin{equation}
		\inf \Big\{  \mc{Q} (\mu): \mu \in \mc{A},   \int_0^{t_i} \mu_{s} (0) ds = \alpha_i, 1 \leq i \leq k \Big\} = \frac{1}{2\sigma^2} \boldsymbol{\alpha}^T A^{-1} \boldsymbol{\alpha}.
	\end{equation}
\end{proposition}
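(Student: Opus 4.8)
The plan is to recast the left-hand side as the minimum of a quadratic form over an affine subspace of a Hilbert space, to solve that minimization by the classical Riesz-representer formula, and finally to identify the relevant Gram matrix with $\sigma^2A$ through direct heat-kernel computations. The starting observation is that for $\mu\in\mc{A}$ given by \eqref{mu formula} with Schwartz data $(\phi,H)$, Lemma \ref{lem:property rate function} together with uniqueness of weak solutions of \eqref{mu_PDE} gives $\mc{Q}(\mu)=\tfrac{\chi(\rho)}{2}\big(\|\phi\|_{L^2(\R)}^2+[H,H]\big)$. I would thus work on the Hilbert space $\bb{H}:=L^2(\R)\oplus\mc{H}^1$ with inner product $\langle(\phi_1,H_1),(\phi_2,H_2)\rangle_{\bb{H}}=\chi(\rho)\big(\langle\phi_1,\phi_2\rangle_{L^2(\R)}+[H_1,H_2]\big)$, so that $\mc{Q}(\mu)=\tfrac12\|(\phi,H)\|_{\bb{H}}^2$ and, for each $1\le i\le k$, the functional $\Lambda_i(\phi,H):=\int_0^{t_i}\mu_s(0)\,ds$ is linear. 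Inserting \eqref{mu formula} into $\Lambda_i$ and using Fubini and the evenness of the heat kernel $p_\tau$, one gets $\Lambda_i(\phi,H)=\chi(\rho)\big(\langle\phi_i^\ast,\phi\rangle_{L^2(\R)}+\int_0^T\!\int_\R\partial_v H_i^\ast\,\partial_v H\,dv\,dr\big)$ with $\phi_i^\ast(v)=\int_0^{t_i}p_s(v)\,ds$ and $\partial_v H_i^\ast(r,v)=-\mathbf{1}\{r<t_i\}\int_r^{t_i}\partial_v p_{s-r}(v)\,ds$. A short verification — using that $u\mapsto\partial_v p_u(v)$ keeps a constant sign for fixed $v\neq0$, so $\|\partial_v H_i^\ast(r,\cdot)\|_{\infty}\le1$ and $\|\partial_v H_i^\ast(r,\cdot)\|_{L^2(\R)}^2\le C\sqrt{t_i-r}$ — shows $\phi_i^\ast\in L^2(\R)$ and $\partial_v H_i^\ast\in L^2([0,T]\times\R)$, so $r_i:=(\phi_i^\ast,H_i^\ast)\in\bb{H}$ is the Riesz representer of $\Lambda_i$.

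Granting this, the classical formula for minimizing a quadratic form subject to finitely many linear constraints gives $\inf\{\tfrac12\|x\|_{\bb{H}}^2:\langle r_i,x\rangle_{\bb{H}}=\alpha_i,\ 1\le i\le k\}=\tfrac12\bs{\alpha}^{T}M^{-1}\bs{\alpha}$, with Gram matrix $M_{ij}=\langle r_i,r_j\rangle_{\bb{H}}=\Lambda_i(r_j)$, attained at $x^\ast=\sum_ic_ir_i$, $\bs{c}=M^{-1}\bs{\alpha}$, whenever $M$ is invertible. Since Schwartz data are dense in $L^2(\R)$ and, in the seminorm $\sqrt{[\cdot,\cdot]}$, in $\mc{H}^1$, and since $M$ will be invertible, I would approximate each $r_i$ in $\bb{H}$ by an element with Schwartz data and adjust the coefficients by solving a perturbed $k\times k$ linear system; this shows the infimum in the proposition, with $\mu$ ranging over $\mc{A}$ under the constraints $\int_0^{t_i}\mu_s(0)\,ds=\alpha_i$, also equals $\tfrac12\bs{\alpha}^{T}M^{-1}\bs{\alpha}$.

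It then remains to compute $M$. Here $M_{ij}=\int_0^{t_i}\mu^{(j)}_s(0)\,ds$, where $\mu^{(j)}$ is the solution \eqref{mu formula} with data $(\phi_j^\ast,H_j^\ast)$. I would evaluate $\mu^{(j)}_s(0)$ in closed form using the semigroup identity $\int_\R p_a(v)p_b(v)\,dv=p_{a+b}(0)$, the Plancherel identity $\int_\R\partial_v p_a(v)\,\partial_v p_b(v)\,dv=\tfrac{1}{\sqrt{2\pi}}(a+b)^{-3/2}$, and the fundamental theorem of calculus; the result is $\mu^{(j)}_s(0)=\sqrt{2/\pi}\,\chi(\rho)\big(\sqrt{s}+\sqrt{(t_j-s)_+}-\sqrt{(s-t_j)_+}\big)$. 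Integrating over $s\in[0,t_i]$ (say $t_i\le t_j$, so only the first two terms contribute) gives $M_{ij}=\tfrac23\sqrt{2/\pi}\,\chi(\rho)\big(t_i^{3/2}+t_j^{3/2}-(t_j-t_i)^{3/2}\big)=\sigma^2a(t_i,t_j)$, since $\sigma^2=\tfrac{4\sqrt{2}\,\chi(\rho)}{3\sqrt{\pi}}=\tfrac43\sqrt{2/\pi}\,\chi(\rho)$ and $a(t_i,t_j)=\tfrac12\big(t_i^{3/2}+t_j^{3/2}-|t_i-t_j|^{3/2}\big)$. Hence $M=\sigma^2A$, which is invertible because $A$, the covariance matrix of a fractional Brownian motion of Hurst parameter $3/4$ at the distinct times $0<t_1<\cdots<t_k$, is strictly positive definite; substituting $M^{-1}=\sigma^{-2}A^{-1}$ yields $\tfrac{1}{2\sigma^2}\bs{\alpha}^{T}A^{-1}\bs{\alpha}$.

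The step I expect to be the main obstacle is the Gram-matrix computation — carrying out the heat-kernel algebra to reach the closed form for $\mu^{(j)}_s(0)$ and, above all, checking that the resulting prefactor matches $\sigma^2$ exactly, which is precisely where the value of the occupation-time limiting variance from \cite{kip1987} must reappear. Secondary points needing care are the $L^2$-membership of the representers $\phi_i^\ast$ and $\partial_v H_i^\ast$ (relying on the sign-definiteness of $u\mapsto\partial_v p_u(v)$) and the density argument transferring the identity from $\bb{H}$ to $\mc{A}$.
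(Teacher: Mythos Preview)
Your proposal is correct and proceeds by the same core mechanism as the paper: both identify the constrained minimizers as linear combinations of the Riesz representers $(\phi_i^\ast,H_i^\ast)$ associated to the linear functionals $\Lambda_i$, and both reduce the value of the infimum to the inverse of the Gram matrix, which is then computed to be $\sigma^2 A$ via heat-kernel identities. The organization differs: the paper first treats the one-point case $k=1$ at time $T$ by Cauchy--Schwarz (Step~1), extends to general $t\le T$ (Step~2), and then in Step~3 builds the general minimizer $\hat\mu$ as a linear combination of the one-point minimizers, checking optimality by an explicit orthogonality computation and evaluating $\int_0^{t_i}\mu^{t_j,\alpha_j}(s,0)\,ds$ through the double integrals \eqref{cal 1}--\eqref{cal 2} and their bilinear analogues. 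You instead invoke the standard quadratic-minimization formula $\tfrac12\bs\alpha^T M^{-1}\bs\alpha$ directly on $\bb H$, and compute $M_{ij}$ by first obtaining the closed form $\mu^{(j)}_s(0)=\sqrt{2/\pi}\,\chi(\rho)\big(\sqrt{s}+\sqrt{(t_j-s)_+}-\sqrt{(s-t_j)_+}\big)$ and then integrating once; this shortcut is neat and bypasses the paper's Steps~1--2 entirely. Your density argument to transfer the identity from $\bb H$ to the class $\mc A$ is also handled more explicitly than in the paper, which asserts directly that the explicit minimizers $(\phi^{t,\alpha},H^{t,\alpha})$ lie in $\mc A$.
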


The proof of the above result is postponed to Subsection \ref{subsec: variation pro}.

\subsection{Finite-dimensional lower bound}\label{subsec: low oc}   The next result concerns the lower bound for the MDP of  the occupation time in the sense of finite-dimensional distributions.

\begin{lemma}\label{lem:finite dimensional lower}
	Fix integer $k > 0$.  For any closed set $O_i \subset \R$, $1 \leq i \leq k$,
	\[\liminf_{n \rightarrow \infty} \frac{n}{a_n^2} \log \P^n_\rho \Big(\Gamma^n_{t_i} \in O_i, \; 1 \leq i \leq k\Big) \geq - \inf \Big\{\frac{1}{2\sigma^2} \boldsymbol{\alpha}^T A^{-1} \boldsymbol{\alpha}: \alpha_i \in O_i, 1 \leq i \leq k \Big\}.\]
\end{lemma}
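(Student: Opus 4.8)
The plan is to run the standard ``smoothing, contraction, and near-optimal trajectory'' scheme, so that the statement reduces to the lower bound \eqref{lower hd} of Theorem \ref{thm:mdphl} together with the variational identity of Proposition \ref{prop:variational problem}. (For the lower bound the sets $O_i$ are understood to be open.) Fix a point $\boldsymbol\alpha=(\alpha_1,\dots,\alpha_k)^T$ with $\alpha_i\in O_i$ for every $i$, and write $c_{\boldsymbol\alpha}:=\frac{1}{2\sigma^2}\boldsymbol\alpha^T A^{-1}\boldsymbol\alpha$. It suffices to prove
\[\liminf_{n\to\infty}\frac{n}{a_n^2}\log\P^n_\rho\big(\Gamma^n_{t_i}\in O_i,\ 1\le i\le k\big)\ \ge\ -c_{\boldsymbol\alpha},\]
since the supremum over all admissible $\boldsymbol\alpha$ then yields $-\inf\{c_{\boldsymbol\alpha}:\alpha_i\in O_i,\ 1\le i\le k\}$. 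Choose $\delta>0$ with $(\alpha_i-2\delta,\alpha_i+2\delta)\subset O_i$ for all $i$.

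First I would use Lemma \ref{lem:occu replacement 2} to pass from $\Gamma^n_{t_i}$ to the smoothed quantity $\int_0^{t_i}\<\mu^n_s,\varphi_\varepsilon\>\,ds$: by that lemma one can fix $\varepsilon>0$ small enough that
\[\limsup_{n\to\infty}\frac{n}{a_n^2}\log\P^n_\rho\Big(\sup_{0\le t\le T}\Big|\Gamma^n_t-\int_0^t\<\mu^n_s,\varphi_\varepsilon\>\,ds\Big|>\delta\Big)<-c_{\boldsymbol\alpha}-1,\]
so this ``bad'' event is negligible on the relevant exponential scale. On its complement, the event $\int_0^{t_i}\<\mu^n_s,\varphi_\varepsilon\>\,ds\in(\alpha_i-\delta,\alpha_i+\delta)$ for all $i$ forces $\Gamma^n_{t_i}\in(\alpha_i-2\delta,\alpha_i+2\delta)\subset O_i$, whence
\[\P^n_\rho\big(\Gamma^n_{t_i}\in O_i,\ \forall i\big)\ \ge\ \P^n_\rho(\mu^n\in U_\varepsilon)\ -\ \P^n_\rho\Big(\sup_{0\le t\le T}\Big|\Gamma^n_t-\int_0^t\<\mu^n_s,\varphi_\varepsilon\>\,ds\Big|>\delta\Big),\]
where $U_\varepsilon:=\big\{\mu:\int_0^{t_i}\<\mu_s,\varphi_\varepsilon\>\,ds\in(\alpha_i-\delta,\alpha_i+\delta),\ 1\le i\le k\big\}$.

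Since the map $\mu\mapsto\big(\int_0^{t_i}\<\mu_s,\varphi_\varepsilon\>\,ds\big)_{i=1}^k$ is continuous on $\mc{D}([0,T],\mc{S}')$ (as already used in Lemma \ref{lem:finite dimensional upper}), the set $U_\varepsilon$ is open, and \eqref{lower hd} gives $\liminf_n\frac{n}{a_n^2}\log\P^n_\rho(\mu^n\in U_\varepsilon)\ge-\inf_{\mu\in U_\varepsilon}\mc{Q}(\mu)$. To control that infimum I would, given $\eta>0$, use Proposition \ref{prop:variational problem} to choose $\mu^\ast\in\mc{A}$ with $\int_0^{t_i}\mu^\ast_s(0)\,ds=\alpha_i$ for all $i$ and $\mc{Q}(\mu^\ast)\le c_{\boldsymbol\alpha}+\eta$. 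As $\mu^\ast\in\mc{A}$, the density $(s,u)\mapsto\mu^\ast_s(u)$ is bounded and continuous on $[0,T]\times\R$ by \eqref{mu formula}, so $\<\mu^\ast_s,\varphi_\varepsilon\>=\int\mu^\ast_s(u)\varphi_\varepsilon(u)\,du\to\mu^\ast_s(0)$ as $\varepsilon\to0$ and, by dominated convergence, $\int_0^{t_i}\<\mu^\ast_s,\varphi_\varepsilon\>\,ds\to\alpha_i$; hence $\mu^\ast\in U_\varepsilon$ once $\varepsilon$ is small, so $\inf_{\mu\in U_\varepsilon}\mc{Q}(\mu)\le c_{\boldsymbol\alpha}+\eta$ and $\liminf_n\frac{n}{a_n^2}\log\P^n_\rho(\mu^n\in U_\varepsilon)\ge-c_{\boldsymbol\alpha}-\eta$. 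Plugging this into the displayed lower bound for $\P^n_\rho(\Gamma^n_{t_i}\in O_i,\forall i)$ — for $\eta<\tfrac12$ the first term dominates the subtracted one on the exponential scale since $a_n^2/n\to\infty$ — yields $\liminf_n\frac{n}{a_n^2}\log\P^n_\rho(\Gamma^n_{t_i}\in O_i,\forall i)\ge-c_{\boldsymbol\alpha}-2\eta$; letting $\eta\to0$ and optimizing over $\boldsymbol\alpha$ completes the proof.

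There is no deep obstacle here beyond Proposition \ref{prop:variational problem} itself. The points that need care are: the nested limits must be taken in the order $n\to\infty$, then $\eta\to0$, with $\varepsilon$ frozen small from the outset via Lemma \ref{lem:occu replacement 2}; the replacement error is super-exponentially small only for each fixed small $\varepsilon$ (not uniformly in $\varepsilon$), which is precisely why $\varepsilon$ must be fixed before sending $n\to\infty$; and one must check that the near-optimal trajectory $\mu^\ast$ furnished by Proposition \ref{prop:variational problem} lies in $\mc{A}$, so that the smoothing $\<\cdot,\varphi_\varepsilon\>$ genuinely converges to the value at the origin and $\mu^\ast$ actually enters the open set $U_\varepsilon$.
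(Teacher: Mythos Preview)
Your approach is essentially the same as the paper's: reduce via Lemma~\ref{lem:occu replacement 2} to the smoothed functionals $\int_0^{t_i}\langle\mu^n_s,\varphi_\varepsilon\rangle\,ds$, apply the lower bound \eqref{lower hd} on the open set $U_\varepsilon$, and exhibit a trajectory in $U_\varepsilon$ with near-optimal rate via Proposition~\ref{prop:variational problem}. There is one small order-of-limits slip, though: you freeze $\varepsilon$ ``from the outset'' and only afterwards introduce $\eta$ and pick $\mu^\ast=\mu^\ast(\eta)$, but the condition $\mu^\ast\in U_\varepsilon$ depends on $\mu^\ast$ and hence on $\eta$; as written there is no guarantee that the already-fixed $\varepsilon$ is small enough for the particular $\mu^\ast$ you then choose. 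The paper sidesteps this by using the \emph{exact} minimizer $\hat\mu$, which is explicitly constructed in the proof of Proposition~\ref{prop:variational problem} and does not depend on any auxiliary parameter; with $\hat\mu$ fixed, one can indeed choose $\varepsilon$ once and for all to satisfy both the replacement estimate and $\hat\mu\in U_\varepsilon$. Alternatively, in your scheme simply fix $\eta$ (and hence $\mu^\ast$) \emph{before} choosing $\varepsilon$; then the argument goes through unchanged.
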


\begin{proof}
For any $\alpha_i \in O_i$, $1 \leq i \leq k$, and for any small $\delta > 0$ such that $(\alpha_i-\delta, \alpha_i +\delta) \in O_i$, $1 \leq i \leq k$, by Lemma \ref{lem:occu replacement 2}, 
\begin{align*}
	\liminf_{n \rightarrow \infty} \frac{n}{a_n^2} \log \P^n_\rho \Big(\Gamma^n_{t_i} \in O_i, \; 1 \leq i \leq k\Big) \geq \liminf_{n \rightarrow \infty} \frac{n}{a_n^2} \log \P^n_\rho \Big(\Gamma^n_{t_i} \in (\alpha_i - \delta,\alpha_i + \delta), \; 1 \leq i \leq k\Big)\\
	\geq \liminf_{\varepsilon \rightarrow \infty} \limsup_{n \rightarrow \infty} \frac{n}{a_n^2} \log \P^n_\rho \Big(\int_0^{t_i} \<\mu^n_s,\varphi_\varepsilon\> ds \in (\alpha_i - \delta/2,\alpha_i + \delta/2), \; 1 \leq i \leq k\Big).
\end{align*}
By Theorem \ref{thm:mdphl}, the last expression is bounded from below by
\[- \limsup_{\varepsilon \rightarrow 0} \inf \big\{ \mc{Q}(\mu): \int_0^{t_i} \<\mu_s,\varphi_\varepsilon\> ds \in (\alpha_i - \delta/2,\alpha_i + \delta/2), \; 1 \leq i \leq k\big\}.\]
Let $\hat{\mu}$ be the minimizer of the variational problem
\[ \inf \Big\{  \mc{Q} (\mu): \mu \in \mc{A},   \int_0^{t_i} \mu_{s} (0) ds = \alpha_i, \; 1 \leq i \leq k\Big\}.\]
It is constructed in the proof of Proposition \ref{prop:variational problem}, and satisfies that \[\mc{Q} (\hat{\mu}) = \frac{1}{2\sigma^2}\boldsymbol{\alpha}^T A^{-1} \boldsymbol{\alpha} .\] Since for any $1 \leq i \leq k$,
\[\lim_{\varepsilon \rightarrow 0} \int_0^{t_i} \big(\hat{\mu}_{s} (0) - \<\hat{\mu}_s,\varphi_\varepsilon\>\big)ds = 0, \quad \int_0^{t_i} \hat{\mu}_{s} (0) ds = \alpha_i,\]
there exists some $\varepsilon_0 = \varepsilon_0 (\delta,\hat{\mu})$ such that for any $\varepsilon < \varepsilon_0$ and for any $1 \leq i \leq k$,
\[\int_0^{t_i}  \<\hat{\mu}_s,\varphi_\varepsilon\> ds \in (\alpha_i - \delta/2,\alpha_i + \delta/2).\]
Therefore, for $\varepsilon < \varepsilon_0$,
\[\inf \big\{ \mc{Q}(\mu): \int_0^{t_i} \<\mu_s,\varphi_\varepsilon\> ds \in (\alpha_i - \delta/2,\alpha_i + \delta/2), \; 1 \leq i \leq k\big\} \leq \mc{Q} (\hat{\mu}). \]
This implies that for any $\alpha_i \in O_i$, $1 \leq i \leq k$,
\[	\liminf_{n \rightarrow \infty} \frac{n}{a_n^2} \log \P^n_\rho \Big(\Gamma^n_{t_i} \in O_i, \; 1 \leq i \leq k\Big) \geq -\frac{1}{2\sigma^2} \boldsymbol{\alpha}^T A^{-1} \boldsymbol{\alpha}.\]
We conclude the proof by optimizing over $\alpha_i \in O_i$, $1 \leq i \leq k$.
\end{proof}

\subsection{Concluding the proof of Theorem \ref{thm-1}}\label{subsec: conclude oc} In the last three subsections, we have shown the MDP for the occupation time in  the sense of finite-dimensional distributions and exponential tightness of the process $\{\Gamma^n_t: 0 \leq t \leq T\}$.  By \cite[Theorem 4.28]{feng2006large}, the process $\{\Gamma^n_t: 0 \leq t \leq T\}$ satisfies the MDP with rate function
\[\frac{1}{\sigma^2} \sup \Big\{ \frac{1}{2} \boldsymbol{\alpha}^T A^{-1} \boldsymbol{\alpha}: k \geq 1, 0 \leq t_1 < t_2 < \ldots <  t_k \leq T\Big\}\]
for any $\alpha \in \mc{C} ([0,T])$, where $\boldsymbol{\alpha} = (\alpha(t_1),\ldots,\alpha(t_k))^T$ and $A=(a(t_i,t_j))_{1 \leq i,j \leq k}$. The last supremum is exactly the large deviation rate function of the fractional Brownian motion $\{n^{-1/2} B^{3/4}_t: 0 \leq t \leq T\}$, and thus equals $I_{\rm path}$. This concludes the proof.

\subsection{Proof of Proposition \ref{prop:variational problem}}\label{subsec: variation pro}

The proof is divided into several steps.

{\bf Step $1$.}  We first show that, for any $\alpha \in \R$,
\begin{equation}\label{one point T}
	\inf \Big\{  \mc{Q} (\mu): \mu \in \mc{A},   \int_0^{T} \mu_{s} (0) ds = \alpha \Big\} = \frac{\alpha^2}{2\sigma^2T^{3/2}} =  \frac{3\sqrt{\pi}\alpha^2}{8\sqrt{2}T^{3/2} \chi(\rho)}.
\end{equation}

	By \eqref{mu formula}, the constraint $\int_0^{T} \mu_{s} (0) ds = \alpha$ is equivalent to
	\begin{align*}
	\frac{\alpha}{\chi(\rho)} &= \int_0^T \int_{\R} p_s (v) \phi(v) \,dv \,ds - \int_0^T \int_0^s \int_{\R} p^\prime_{s-r} (v) \partial_v H (r,v)\,dv\,dr\,ds\\
	&= \int_{\R}  \Big( \int_0^T p_s (v) ds\Big) \phi(v) \,dv - \int_0^T \int_{\R} \Big(\int_0^{T-r} p^\prime_{s} (v) ds\Big) \partial_v H(r,v)\,dv\,dr.
	\end{align*}
By Cauchy-Schwarz inequality,
\begin{align*}
		\frac{\alpha^2}{\chi(\rho)^2} \leq \big\{\|\phi\|_{L^2 (\R)}^2 + [H,H]\big\} \Big\{  \int_{\R} \Big(\int_0^T p_s (v) ds\Big)^2 dv+ \int_0^T \int_{\R} \Big(\int_0^{T-r} p^\prime_{s} (v) ds\Big)^2 dv\,dr\Big\}.
\end{align*}
Moreover, the equality holds at the point $(\phi^{T,\alpha},H^{T,\alpha})$, where
\[\phi^{T,\alpha} (v) = c_0 \int_0^T p_s (v) ds, \quad  \partial_v H^{T,\alpha} (r,v) = -c_0 \int_0^{T-r} p^\prime_{s} (v) ds,\]
with the constant $c_0$ determined by
\begin{equation}\label{c0 determined}
\frac{\alpha}{\chi(\rho)}= c_0  \,\Big\{  \int_{\R} \Big(\int_0^T p_s (v) ds\Big)^2 dv+ \int_0^T \int_{\R} \Big(\int_0^{T-r} p^\prime_{s} (v) ds\Big)^2 dv\,dr\Big\}.
\end{equation}
Notice that $\phi^{T,\alpha},H^{T,\alpha} \in \mc{A}$. At the end of this step, we show that
\begin{align}
	\int_{\R} \Big(\int_0^T p_s (v) ds\Big)^2 dv&= \frac{4(2-\sqrt{2})T^{3/2}}{3\sqrt{\pi}},\label{cal 1}\\
	\int_0^T \int_{\R} \Big(\int_0^{T-r} p^\prime_{s} (v) ds\Big)^2 dv\,dr&=\frac{8(\sqrt{2}-1)T^{3/2}}{3\sqrt{\pi}}.\label{cal 2}
\end{align}
Thus, 
\begin{equation}\label{c0}
c_0 = c_0 (T,\alpha) = \frac{3\sqrt{\pi}}{4\sqrt{2}T^{3/2}} 	\frac{\alpha}{\chi(\rho)}.
\end{equation}
Then, we  conclude the proof of \eqref{one point T} by Lemma \ref{lem:property rate function}.

It remains to prove \eqref{cal 1} and \eqref{cal 2}. For \eqref{cal 1},
\begin{align*}
	\int_{\R} \Big(\int_0^T p_s (v) ds\Big)^2 dv = 	\int_{\R} dv \int_0^T ds \int_0^T dt \frac{1}{2\pi} \frac{1}{\sqrt{ts}} e^{-\tfrac{v^2}{2} (\tfrac{1}{t} + \tfrac{1}{s})}. 
\end{align*}
Since 
\[\int_{\R} dv \frac{1}{\sqrt{2\pi}} e^{-\tfrac{v^2}{2} (\tfrac{1}{t} + \tfrac{1}{s})} = \sqrt{\frac{ts}{t+s}},\]
the last expression equals
\begin{align*}
	\int_0^T ds \int_0^T dt \frac{1}{\sqrt{2\pi}} \frac{1}{\sqrt{t+s}}  = \int_0^T ds \frac{\sqrt{2}}{\sqrt{\pi}}  \big[(T+s)^{1/2} - s^{1/2}\big] = \frac{4(2-\sqrt{2})T^{3/2}}{3\sqrt{\pi}}.
\end{align*}
For \eqref{cal 2}, notice that
\[	\int_0^T \int_{\R} \Big(\int_0^{T-r} p^\prime_{s} (v) ds\Big)^2 dv\,dr = \int_0^T dr \int_{\R} dv \int_{0}^{T-r} ds \int_{0}^{T-r} dt \frac{1}{2\pi} \frac{v^2}{(ts)^{3/2}} e^{-\tfrac{v^2}{2} (\tfrac{1}{t} + \tfrac{1}{s})}.\]
Since
\[\int_{\R} dv \frac{v^2}{\sqrt{2\pi}} e^{-\tfrac{v^2}{2} (\tfrac{1}{t} + \tfrac{1}{s})} = \Big(\frac{ts}{t+s}\Big)^{3/2},\]
the last expression equals
\begin{align*}
	&\int_0^T dr  \int_{0}^{T-r} ds \int_{0}^{T-r} dt \frac{1}{\sqrt{2\pi}} (t+s)^{-3/2} = 	\int_0^T dr  \int_{0}^{T-r} ds \frac{\sqrt{2}}{\sqrt{\pi}} [s^{-1/2} - (T-r+s)^{-1/2}]\\
	&= \int_0^T dr  \frac{4(\sqrt{2}-1)}{\sqrt{\pi}} (T-r)^{1/2} = \frac{8(\sqrt{2}-1)T^{3/2}}{3\sqrt{\pi}}.
\end{align*}

{\bf Step $2$.} We then show that,  for any $\alpha \in \R$ and for any $0 \leq t \leq T$,
\begin{equation}\label{one point}
	\inf \Big\{  \mc{Q} (\mu): \mu \in \mc{A},   \int_0^{t} \mu_{s} (0) ds = \alpha \Big\} = \frac{\alpha^2}{2\sigma^2t^{3/2}} =  \frac{3\sqrt{\pi}\alpha^2}{8\sqrt{2}t^{3/2} \chi(\rho)}.
\end{equation}

By replacing $T$ by $t$ in the last step, one can prove that the left hand side is not smaller than the right hand side. For the opposite direction, define $(\phi^{t,\alpha}, H^{t,\alpha})$ as
\begin{equation}\label{phi H t}
	\phi^{t,\alpha} (v) = c_0 (t,\alpha) \int_0^t p_s (v) ds, \quad  \partial_v H^{t,\alpha} (r,v) = \begin{cases}
		-c_0 (t,\alpha) \int_0^{t-r} p^\prime_{s} (v) ds, \quad &0 \leq r \leq t,\\
		0, &r \geq t,
	\end{cases}
\end{equation}
where $c_0(t,\alpha)$ is given by \eqref{c0} with $T$ replaced by $t$. By the above calculations, the right hand side in \eqref{one point} equals $\mc{Q}(\mu^{t,\alpha})$, where $\mu^{t,\alpha}$ is defined from \eqref{mu formula} corresponding to  $(\phi^{t,\alpha}, H^{t,\alpha})$. Thus, we only need to show 
\[\int_0^t \mu^{t,\alpha} (0,s) ds = \alpha.\]
It is straightforward by using \eqref{mu formula} and \eqref{c0 determined}, thus concluding the proof.

{\bf Step 3.} Last, we claim that the infimum in the proposition is obtained at the point $(\hat{\phi},\hat{H})$, where
\begin{equation}
\hat{\phi} = \sum_{i=1}^k \beta_i \phi^{t_i,1}, \quad \hat{H} = \sum_{i=1}^k \beta_i H^{t_i,1}.
\end{equation}
Above, the coefficients $\{\beta_i\}$ are determined by the constraints
\[\int_0^{t_i} \hat{\mu} (s,0) ds = \alpha_i, \quad 1 \leq i \leq k.\]
Equivalently,
\[\sum_{j=1}^{k} \beta_j \int_0^{t_i}  \mu^{t_j,1} (s,0) ds = \alpha_i, \quad 1 \leq i \leq k.\]
Indeed, for the claim, we need to show that, for any $\mu \in \mc{A}$ satisfying 
\[\int_0^{t_i} \mu_s (0) ds = 0, \quad \forall i = 1,2,\ldots,k,\] 
we have
\[\mc{Q} (\hat{\mu} + \mu) = \frac{\chi(\rho)}{2} \Big(\|\phi + \hat{\phi}\|_{L^2 (\R)}^2 + \|H+\hat{H}\|_{\mc{H}^1}^2\Big) \geq \frac{\chi(\rho)}{2} \Big(\|\hat{\phi}\|_{L^2 (\R)}^2 + \|\hat{H}\|_{\mc{H}^1}^2\Big) = \mc{Q} (\hat{\mu}).\]
The above result follows directly if we can show that
\[\<\phi,\hat{\phi}\> + [H,\hat{H}] = 0.\]
By \eqref{phi H t} and \eqref{mu formula}, the left hand side of the last line equals
\begin{align*}
	&\sum_{i=1}^k \beta_i \Big[\int_{\R} dv \int_0^{t_i} dr \partial_v H (r,v) \big(-c_0 (t_i,1) \int_0^{t_i-r} p_s^\prime (v) ds \big) + \int_{\R} dv \phi(v) \big(c_0(t_i,1) \int_0^{t_i} p_s (v) ds\big)\Big]\\
	&= \sum_{i=1}^k \frac{\beta_i c_0 (t_i,1)}{\chi(\rho)} \int_0^{t_i} \mu(s,0) ds = 0,
\end{align*}
thus concluding the proof.

Next, we show that
\begin{equation}\label{cal 3}
	\int_0^{t_i}  \mu^{t_j,\alpha_j} (s,0) ds = \frac{\alpha_j a(t_i,t_j)}{t_j^{3/2}}.
\end{equation}
 By \eqref{mu formula} and \eqref{phi H t}, the term on the left hand side equals
\begin{align*}
	\chi(\rho) c_0(t_j,\alpha_j) \Big\{ \int_0^{t_i} ds \int_0^{t_j} dr \int_{\R} dv p_s(v) p_r (v) 
	+ \int_0^{t_i \wedge t_j} dr \int_0^{t_i-r} ds \int_0^{t_j-r} d\tau \int_{\R} dv p^\prime_\tau (v) p^\prime_s (v) \Big\}.
\end{align*}
By direct calculations, the first term inside the above brace equals
\begin{align*}
	\int_0^{t_i} ds \int_0^{t_j} dr \frac{1}{\sqrt{2\pi}} (r+s)^{-1/2} = \frac{2\sqrt{2}}{3\sqrt{\pi}} \big[(t_i+t_j)^{3/2} - t_i^{3/2} - t_j^{3/2} \big],
\end{align*}
and the second one equals
\begin{multline*}
	\int_0^{t_i \wedge t_j} dr \int_0^{t_i-r} ds \int_0^{t_j-r} d\tau \frac{1}{\sqrt{2\pi}} (\tau + s)^{-3/2}
	\\
	= \frac{4\sqrt{2}}{3\sqrt{\pi}} \big[ t_i^{3/2} + t_j^{3/2} + (t_i+t_j-2t_i\wedge t_j)^{3/2}/2 
	- (t_i-t_i\wedge t_j)^{3/2} - (t_j-t_i\wedge t_j)^{3/2} - (t_i+t_j)^{3/2}/2\big].
\end{multline*}
Thus,
\[\int_0^{t_i}  \mu^{t_j,\alpha_j} (s,0) ds = \frac{4\sqrt{2}}{3\sqrt{\pi}}\chi(\rho) c_0(t_j,\alpha_j) a(t_i,t_j) = \frac{\alpha_j}{t_j^{3/2}} a(t_i,t_j). \]

Let $D = {\rm Diag} (d_j)_{1 \leq j \leq k}$ be the diagonal $k \times k$ matrix with $d_j = t_j^{-3/2}$. Then, the constraints on $\boldsymbol{\beta}$ reduces to \[A D \boldsymbol{\beta} = \boldsymbol{\alpha},\] and thus
\[\boldsymbol{\beta} = D^{-1} A^{-1} \boldsymbol{\alpha},\]
where $\boldsymbol{\beta} = (\beta_1,\ldots,\beta_k)^T$. 

With the above construction, by Lemma \ref{lem:property rate function}, the infimum in the proposition equals
\begin{align*}
\frac{\chi(\rho)}{2}	&\sum_{i,j=1}^k \beta_i \beta_j \big\{  \<\phi^{t_i,1},\phi^{t_j,1}\>  + [H^{t_i,1},H^{t_j,1}]\big\} = \frac{\chi(\rho)}{2}	\sum_{i,j=1}^k \beta_i \beta_j  c_0(t_i,1) c_0(t_j,1) \\
&\times  \Big\{ \int_0^{t_i} ds \int_0^{t_j} dr \int_{\R} dv p_s(v) p_r (v) 
+ \int_0^{t_i \wedge t_j} dr \int_0^{t_i-r} ds \int_0^{t_j-r} d\tau \int_{\R} dv p^\prime_\tau (v) p^\prime_s (v) \Big\}.
\end{align*} 
By the above calculations, the last expression equals
\begin{align*}
\frac{1}{2\sigma^2} \sum_{i,j=1}^k \beta_i \beta_j a(t_i,t_j) t_i^{-3/2} t_j^{-3/2} = \frac{1}{2\sigma^2}  \boldsymbol{\beta}^T DAD \boldsymbol{\beta}  = \frac{1}{2\sigma^2} \boldsymbol{\alpha}^T A^{-1} \boldsymbol{\alpha},
\end{align*}
thus concluding the proof.

\bibliographystyle{plain}
\bibliography{bibliography.bib}
\end{document}